\DeclareMathOperator{\Lie}{Lie}
\DeclareMathOperator{\rk}{rk}
\DeclareMathOperator{\ad}{ad}
\DeclareMathOperator{\Der}{Der}
  \renewenvironment{thebibliography}[1]{
    \begin{oldthebibliography}{#1}
      \setlength{\parskip}{0ex}
      \setlength{\itemsep}{0ex}
  }
  {
  \end{oldthebibliography}
  }
\begin{document}

\newcounter{rownum}
\setcounter{rownum}{0}
\newcommand{\ab}{\addtocounter{rownum}{1}\arabic{rownum}}

\newcommand{\x}{$\times$}
\newcommand{\bb}{\mathbf}

\newcommand{\Ind}{\mathrm{Ind}}
\newcommand{\Char}{\mathrm{char}}
\newcommand{\hra}{\hookrightarrow}
\newtheorem{lemma}{Lemma}[section]
\newtheorem{theorem}[lemma]{Theorem}
\newtheorem*{TA}{Theorem A}
\newtheorem*{TB}{Theorem B}
\newtheorem*{TC}{Theorem C}
\newtheorem*{CorC}{Corollary C}
\newtheorem*{TD}{Theorem D}
\newtheorem*{TE}{Theorem E}
\newtheorem*{PF}{Proposition E}
\newtheorem*{C3}{Corollary 3}
\newtheorem*{T4}{Theorem 4}
\newtheorem*{C5}{Corollary 5}
\newtheorem*{C6}{Corollary 6}
\newtheorem*{C7}{Corollary 7}
\newtheorem*{C8}{Corollary 8}
\newtheorem*{claim}{Claim}
\newtheorem{cor}[lemma]{Corollary}
\newtheorem{conjecture}[lemma]{Conjecture}
\newtheorem{prop}[lemma]{Proposition}
\newtheorem{question}[lemma]{Question}
\theoremstyle{definition}
\newtheorem{example}[lemma]{Example}
\newtheorem{examples}[lemma]{Examples}
\theoremstyle{remark}
\newtheorem{remark}[lemma]{Remark}
\newtheorem{remarks}[lemma]{Remarks}
\newtheorem{obs}[lemma]{Observation}
\theoremstyle{definition}
\newtheorem{defn}[lemma]{Definition}

  \def\hal{\unskip\nobreak\hfil\penalty50\hskip10pt\hbox{}\nobreak
  \hfill\vrule height 5pt width 6pt depth 1pt\par\vskip 2mm}

\renewcommand{\labelenumi}{(\roman{enumi})}
\newcommand{\Hom}{\mathrm{Hom}}
\newcommand{\Int}{\mathrm{int}}
\newcommand{\Ext}{\mathrm{Ext}}
\newcommand{\opH}{\mathrm{H}}
\newcommand{\D}{\mathcal{D}}
\newcommand{\soc}{\mathrm{Soc}}
\newcommand{\SO}{\mathrm{SO}}
\newcommand{\Sp}{\mathrm{Sp}}
\newcommand{\SL}{\mathrm{SL}}
\newcommand{\GL}{\mathrm{GL}}
\newcommand{\OO}{\mathcal{O}}
\newcommand{\diag}{\mathrm{diag}}
\newcommand{\End}{\mathrm{End}}
\newcommand{\tr}{\mathrm{tr}}
\newcommand{\Stab}{\mathrm{Stab}}
\newcommand{\red}{\mathrm{red}}
\newcommand{\Aut}{\mathrm{Aut}}
\renewcommand{\H}{\mathcal{H}}
\renewcommand{\u}{\mathfrak{u}}
\newcommand{\Ad}{\mathrm{Ad}}
\newcommand{\N}{\mathcal{N}}
\newcommand{\C}{\mathbb{C}}
\newcommand{\Z}{\mathbb{Z}}
\newcommand{\la}{\langle}\newcommand{\ra}{\rangle}
\newcommand{\gl}{\mathfrak{gl}}
\newcommand{\g}{\mathfrak{g}}
\newcommand{\F}{\mathbb{F}}
\newcommand{\m}{\mathfrak{m}}
\renewcommand{\b}{\mathfrak{Borho}}
\newcommand{\p}{\mathfrak{p}}
\newcommand{\q}{\mathfrak{q}}
\renewcommand{\l}{\mathfrak{l}}
\newcommand{\del}{\partial}
\newcommand{\h}{\mathfrak{h}}
\renewcommand{\t}{\mathfrak{t}}
\renewcommand{\k}{\mathfrak{k}}
\newcommand{\Gm}{\mathbb{G}_m}
\renewcommand{\c}{\mathfrak{c}}
\renewcommand{\r}{\mathfrak{r}}
\newcommand{\n}{\mathfrak{n}}
\newcommand{\s}{\mathfrak{s}}
\newcommand{\Q}{\mathbb{Q}}
\newcommand{\z}{\mathfrak{z}}
\newcommand{\pso}{\mathfrak{pso}}
\newcommand{\so}{\mathfrak{so}}
\renewcommand{\sl}{\mathfrak{sl}}
\newcommand{\psl}{\mathfrak{psl}}
\renewcommand{\sp}{\mathfrak{sp}}
\newcommand{\Ga}{\mathbb{G}_a}

\newenvironment{changemargin}[1]{%
  \begin{list}{}{%
    \setlength{\topsep}{0pt}%
    \setlength{\topmargin}{#1}%
    \setlength{\listparindent}{\parindent}%
    \setlength{\itemindent}{\parindent}%
    \setlength{\parsep}{\parskip}%
  }%
  \item[]}{\end{list}}

\parindent=0pt
\addtolength{\parskip}{0.5\baselineskip}

\subjclass[2010]{17B45}
\title{Rigid orbits and sheets in reductive Lie algebras over fields of prime characteristic}

\author{Alexander Premet}
\address{The University of Manchester\\ Oxford Road, M13 9PL, UK} \email{alexander.premet@manchester.ac.uk {\text{\rm(Premet)}}}

\author{David I. Stewart}
\address{King's College\\ Cambridge, UK} \email{dis20@cantab.net {\text{\rm(Stewart)}}}
\pagestyle{plain}
\begin{abstract}Let $G$ be a simple simple-connected algebraic group  over an algebraically closed field $k$ of characteristic $p>0$ with $\g=\Lie(G)$.
We discuss various properties of nilpotent orbits in $\g$, which have previously only been considered over $\C$. Using computational methods, we extend to positive characteristic various calculations of de Graaf with nilpotent orbits in exceptional Lie algebras. 
In particular, we classify those orbits which are reachable,  those which satisfy a certain related condition due to Panyushev, and determine the codimension in the centraliser $\g_e$ of its the derived subalgebra $[\g_e,\g_e]$. Some of these calculations are used 
to show that the list of rigid nilpotent orbits in $\g$,
the classification of sheets of $\g$ and the distribution of the nilpotent orbits amongst them are independent of good characteristic, remaining the same as in the characteristic zero case. We also give a comprehensive account of the theory of sheets in reductive Lie algebras over algebraically closed fields of good characteristic. 
\end{abstract}
\maketitle
\section{Introduction}

Let $G$ be a reductive algebraic group over an algebraically closed field $k$ of characteristic $p\geq 0$ with $\g=\Lie(G)$. We consider various properties of a nilpotent element $e\in\g$ and its centraliser $\g_e$. The element $e$ is called \emph{reachable} if $e\in[\g_e,\g_e]$. It is called \emph{strongly reachable} if $[\g_e,\g_e]=\g_e$, i.e.~the subalgebra $\g_e$ is perfect. If $p$ is a good prime for $G$, it is said to satisfy \emph{Panyushev property} if in the associated grading $\g_e=\bigoplus_{i\geq 0}\g_e(i)$, the Lie subalgebra $\bigoplus_{i\ge 1}\,\g_e(i)$ is generated by $\g_e(1)$. For the case $p=0$, in \cite{DeG13} the author applies various routines in the SLA package of GAP in order to confirm and finish the classification of nilpotent elements which are reachable, strongly reachable or satisfy the Panyushev property. In particular, he confirms that in characteristic $0$, all reachable elements are Panyushev. For $G$ classical and $p=0$ each of three properties above characterises the class of rigid (i.e. non-induced) nilpotent orbits in $\g$. This was established by Yakimova in \cite{Ya}. 

It is the purpose of this article to record extensions to de Graaf's results to deal with the case where $p>0$.
For $G$ exceptional and simply connected (and for any prime $p$) we compute the number $c(e):=\dim\g_e/[\g_e,\g_e]$ for any nilpotent element $e\in\g$. This data is recorded in Tables~\ref{t3} and \ref{t4}. 
It turns out that $c(e)$ is independent of
$p$ provided that $p$ is good for $G$.
Our computations agree with those in \cite{DeG13} made in the characteristic zero case. 
 We also consider another property, which is relevant in the theory of finite $W$-algebras. Let us call a nilpotent element \emph{almost reachable} if it is not reachable, but $\g_e=ke\oplus[\g_e,\g_e]$.

If $G$ is $\SO_n$ or $\Sp_n$ and $p\ne 2$ then \cite[Theorem~3(i)]{PT14} provides a general formula for $c(e)$ in terms of the partition of $n$ attached to $e$. We mention for completeness that for $G=\SL_n$ or $\GL_n$ the value of $c(e)$ is also known for all $e\in\g$ (and all $p$) thanks to the explicit description of $\g_e$ given in \cite{Ya}; see \cite[Remark~1]{PT14} for more detail. So in our paper we mostly deal with the groups of  exceptional types.
The main results of the computational part of our paper are as follows:

\begin{theorem}\label{T1}Let $G$ be a simple, simply connected algebraic group of exceptional type over an algebraically closed field of characteristic $p>0$ and let $e$ be a nilpotent element in $\g=\Lie(G)$. 
Then $e$ is reachable or almost reachable in characteristic $p$ if and only if the orbit of $e$ is listed in the first column of Table \ref{t1} or \ref{t2}. If, moreover, the equality $\g_e=[\g_e,\g_e]$ holds for $e$ in characteristic $p$ then this is indicated in the the third column of Table~\ref{t1} of \ref{t2}. 
\end{theorem}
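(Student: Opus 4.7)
The plan is to verify the theorem by a direct case-by-case computation, orbit-by-orbit, in each exceptional Lie algebra $\g=\Lie(G)$ for $G\in\{G_2,F_4,E_6,E_7,E_8\}$ and each relevant prime $p$. The natural framework is to pick, for every nilpotent orbit in $\g$, an explicit representative $e$ (via the Bala--Carter labels in good characteristic, and via the known distinguished/non-Bala--Carter representatives produced in the literature in bad characteristic), then compute the centraliser $\g_e$ as the kernel of $\ad e\colon\g\to\g$ by straightforward linear algebra in a fixed Chevalley basis. From the resulting basis of $\g_e$ one forms the derived subalgebra $[\g_e,\g_e]$ by taking brackets of all pairs of basis elements and row-reducing; this yields $c(e)=\dim\g_e-\dim[\g_e,\g_e]$. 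To decide the reachability trichotomy one tests whether $e$ lies in $[\g_e,\g_e]$ (reachable), whether $e\notin[\g_e,\g_e]$ but $\g_e=ke\oplus[\g_e,\g_e]$ (almost reachable, which numerically forces $c(e)=1$), or neither; the second and third columns of Tables~\ref{t1} and~\ref{t2} are populated accordingly.

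The key structural observation that makes the tables tractable is that $c(e)$ is constant on orbits, so one checks a single representative per orbit; moreover in good characteristic one can restrict the search of potentially reachable orbits to the non-induced (rigid) orbits and a few additional candidates, since an induced orbit whose $\g_e$ is too large will automatically fail $e\in[\g_e,\g_e]$ on dimension grounds. I would first re-run de Graaf's $p=0$ computation inside the SLA GAP package as a sanity check, then port the same routine to the corresponding group scheme over $\F_p$ for each good prime, and finally implement the bad-prime cases by hand using the orbit classifications (for $G_2$ in $p=2,3$, $F_4/E_6/E_7$ in $p=2,3$, $E_8$ in $p=2,3,5$) and the explicit Chevalley commutation relations.

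The first serious task will be producing the tables uniformly: for each orbit one must (i) fix a representative $e$, (ii) present a basis of $\g_e$, (iii) reduce the commutator table to a basis of $[\g_e,\g_e]$, (iv) decide membership of $e$, and finally (v) check $p$-independence in good characteristic, which is asserted in the discussion and should emerge from the observation that all the rational numbers appearing in the centraliser matrices can be taken to have denominators invertible mod $p$ when $p$ is good. I would also include, for each orbit where the class changes in bad characteristic (e.g.\ the subregular orbit splitting in $G_2$ at $p=3$), an auxiliary ad hoc computation, and cross-reference against \cite{PT14} in the classical-like cases for partial sanity checks.

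The main obstacle is that the verification is genuinely computational rather than conceptual: there is no shortcut replacing the orbit-by-orbit commutator analysis in $E_7$ and $E_8$, where $\dim\g_e$ can be large and one must be careful that the linear algebra is performed in the correct prime characteristic. Bookkeeping in the bad-prime cases, where the orbit set itself differs from characteristic zero and where $\g_e$ may behave unexpectedly, will be the delicate part; the tables in Section~\ref{} should be organised so that each row records the choice of representative, the relevant $p$, the numerical invariants $\dim\g_e,\dim[\g_e,\g_e],c(e)$, and the reachable/almost reachable/strongly reachable status, thereby producing a machine-verifiable proof of the theorem.
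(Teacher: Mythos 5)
Your general strategy—fix a Chevalley $\Z$-form of $\g$, pick explicit orbit representatives (from \cite{UGA05} in bad characteristic and \cite{LT11} in good), compute $\g_e=\ker(\ad e)$, form $[\g_e,\g_e]$ by bracketing a basis and row-reducing, and test membership of $e$—is essentially the method used in the paper. The bad-prime part of your plan (a finite explicit calculation over $\F_p$ for each $p$ in a short list) also matches what is done.

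There is, however, a genuine gap in how you handle the good primes. You propose to ``port the same routine to the corresponding group scheme over $\F_p$ for each good prime'' and justify $p$-independence by the observation that ``all the rational numbers appearing in the centraliser matrices can be taken to have denominators invertible mod $p$.'' That argument does not work: having integer (or $p$-integral) entries is not enough to guarantee that the rank of a matrix is preserved under reduction mod $p$. A matrix over $\Z$ with full rank over $\Q$ can drop rank modulo $p$ whenever $p$ divides one of its elementary divisors. As stated, your method either requires checking infinitely many primes or silently assumes a fact you have not proved. The correct and necessary step, which the paper carries out in \S\ref{s:str}, is: build an integer matrix $M$ (and a second matrix $M'$ obtained by adjoining the coordinate row of $e$) whose ranks control $\dim[\g_e,\g_e]$ and $\dim(ke+[\g_e,\g_e])$; compute the elementary divisors (Smith normal form) of $M$ and $M'$; conclude that for any prime $p$ exceeding every elementary divisor the ranks over $\F_p$ agree with those over $\Q$. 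This is what produces the finite exceptional list ($p\le 7$) and makes the verification rigorous. Without this Smith-normal-form step your proposal is not a proof, only a computation for each prime you happen to test. A secondary point: your suggested shortcut of restricting attention in good characteristic to rigid orbits ``and a few additional candidates'' is not a reduction you can justify a priori for plain reachability (only perfectness of $\g_e$ implies rigidity, not the weaker condition $e\in[\g_e,\g_e]$), and the paper does not use it; dropping this heuristic and running the elementary-divisor argument over all orbits is both simpler and safe.
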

Tables~\ref{t1} and \ref{t2} show that many new nilpotent orbits become reachable in bad characteristic and on the three occasions this happens when $p$ is good for $G$.
In this case, the new orbits are as follows:
\begin{itemize}
\item[(A)\,]  $p=5$, $G$ is of type ${\rm E_7}$ and  $e$ is of type ${\rm A_3+A_2+A_1}$;
\smallskip
\item[(B)\,] $p=7$, $G$ is of type ${\rm E_7}$ and $e$ is of type ${\rm A_2+3A_1}$;
\smallskip
\item[(C)\,] $p=7$, $G$ is of type ${\rm E_8}$ and $e$ is of type ${\rm A_4+A_2+A_1}$.
\end{itemize}
It is interesting that these 
three orbits are responsible for the existence of new maximal Lie subalgebras in $\g=\Lie(G)$ which have no analogues in characteristic $0$. This will be explained in detail in a forthcoming paper by the authors.

We have also checked the validity of Panyushev property
in good characteristic; see \S\ref{Pan} for more detail on our computations. This property will play an important role 
in proving Humphreys' conjecture on the existence of
$U_\chi(\g)$-modules of dimension $p^{(\dim G\cdot\chi)/2}$ (here $U_\chi(\g)$ stands for the reduced enveloping algebra of $\g$ associated with a linear function $\chi\in\g^*$). We denote by $G_\C$ the complex simple algebraic group of the same type as $G$ and let
$\g_\C=\Lie(G_\C)$. Recall that if $p$ is good for $G$ then the nilpotent orbits in $\g$ have the same labels as those in $\g_\C$; see \cite[pp.~401--407]{Car93}, for example.

\begin{theorem}\label{T2} Let $G$ be as in Theorem~\ref{T1} and suppose $p$ is a good prime for $G$. Then  an element $e$ in a nilpotent orbit $\OO$ satisfies the Panyushev condition if and only if the nilpotent orbit in 
$\g_\C$ which has the same label as $\OO$  consists of reachable elements. 
In other words, $e$ satisfies the Panyushev condition if and only if it is reachable and not listed in cases~(A), (B) or (C) above.\end{theorem}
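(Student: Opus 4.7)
The plan is as follows. The forward implication is straightforward: the Panyushev condition immediately implies reachability in any characteristic admitting a Jacobson--Morozov theorem, which includes all good primes by Pommerening's theorem. Explicitly, since $e \in \g_e(2)$, if $\bigoplus_{i \geq 1} \g_e(i)$ is Lie-generated by its degree-one component $\g_e(1)$, then $e$ is itself a sum of iterated brackets of elements of $\g_e$, so that $e \in [\g_e, \g_e]$. Invoking Theorem~\ref{T1}, this already shows that any Panyushev orbit in good characteristic is listed as reachable in Tables~\ref{t1}--\ref{t2}. So the real task is, for each good prime $p$ and each exceptional $G$, to identify exactly which reachable orbits are Panyushev in characteristic $p$ and to match the result against the characteristic-zero classification of de Graaf.

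I would carry this out by computer in the spirit of \S\ref{Pan}, using GAP's SLA package as in \cite{DeG13}. For each $G$ and each good $p$, the routine is: construct the Chevalley Lie algebra $\g$ over $\F_p$; for every standard nilpotent representative $e$ from \cite{Car93}, build an $\sl_2$-triple $(e,h,f)$; compute $\g_e$ together with its $\ad h$-eigenspace decomposition $\g_e = \bigoplus_{i \geq 0}\g_e(i)$; extract a basis of $\g_e(1)$; iteratively take Lie brackets to generate the subalgebra $\mathfrak{L} \subseteq \bigoplus_{i \geq 1}\g_e(i)$ until saturation; and compare $\dim \mathfrak{L}$ with $\sum_{i\geq 1}\dim\g_e(i)$. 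The orbits for which equality holds are the Panyushev orbits. By \cite{DeG13}, in characteristic zero these are precisely the reachable orbits, so one just needs to check that the same list is produced by the above routine in each good characteristic, with the sole exception of the three cases (A), (B), (C) of Theorem~\ref{T1}, where I expect -- and would verify directly -- that Panyushev fails even though reachability newly holds.

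The main obstacle is not conceptual but practical: the computation is sizable in $E_7$ and especially $E_8$, and the small good primes $p = 5$ for $E_7$ and $p = 7$ for $E_7, E_8$ must be handled individually, since it is precisely at these primes that the reachability set diverges from characteristic zero and a priori one cannot simply port over the $\C$-computation. One also has to take care that the output of the iterated-bracket step is not inadvertently smaller than the true subalgebra generated by $\g_e(1)$ due to a premature stopping condition; this is easily guarded against by demanding an extra round of bracketing after the apparent stabilisation, but it is the sort of bookkeeping issue one has to watch for when trusting the final list.
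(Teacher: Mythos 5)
Your forward direction (Panyushev $\Rightarrow$ reachable via $e \in \g_e(2) = [\g_e(1),\g_e(1)] \subseteq [\g_e,\g_e]$) is correct and, incidentally, more explicit than what the paper spells out. The use of $\sl_2$-triples rather than the associated cocharacters of \cite{LT11} is a harmless cosmetic difference in good characteristic, where the two gradings agree by \cite{P03}.

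However, there is a genuine gap in the proposed strategy: you cannot literally ``construct the Chevalley Lie algebra over $\F_p$ \ldots\ for each good $p$'', since there are infinitely many good primes for each type. Your sketch tacitly assumes that once the small primes ($p=5,7$) are handled, the large primes will agree with the characteristic-zero answer, but you never supply the mechanism that guarantees this. This finiteness reduction is precisely the heart of the paper's argument in \S\ref{Pan}: one works once, integrally, in $\g_\Z$; computes a $\Q$-basis of $(\g_\Q)_e(1)$ lying in $\g_\Z$; generates a Lie subring and records the elementary divisors of the resulting coefficient matrix; and then observes that reduction mod $p$ can only change the rank (hence the dimension of the subalgebra generated by $\g_e(1)$) for primes dividing one of those elementary divisors. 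It is then verified that all elementary divisors arising are products of bad primes only, so the Panyushev property is literally characteristic-independent for good $p$. Without this single integral computation and its elementary-divisor bookkeeping, the claim that ``large good $p$ behaves like $\C$'' has no justification, and a finite-prime check does not amount to a proof. Relatedly, your remark that at $p=5,7$ ``one cannot simply port over the $\C$-computation'' is the opposite of what the paper in fact does: the integral computation is ported to every good $p$, and it is exactly at those problematic primes where the reachability set changes that the Panyushev verdict is seen to persist (i.e.\ the orbits (A), (B), (C), while newly reachable, remain non-Panyushev), which the paper reads off from the same elementary-divisor data rather than by a fresh mod-$p$ computation.
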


It follows from Theorems~\ref{T1} and \ref{T2} that the Panyushev and strong reachability conditions are independent of good characteristic.

A nilpotent element $e\in\g$ is called {\it rigid} if it cannot be obtained by Lusztig--Spaltenstein induction from a proper Levi subalgebra of $\g$. 
Arguing as in \cite[3.2]{P10} one can reduce proving Humphreys' conjecture to the case where $\chi\in\g^*$ corresponds to a rigid nilpotent element of $\g$ under a $G$-equivariant isomorphism $\g\cong\g^*$. 
In the characteristic zero case, all rigid nilpotent orbits in exceptional Lie algebras are classified by Elashvili
\cite{Elashvili} and his computations are recently double-checked in \cite{dG-E} by using GAP. 
Since the 
way this is done in {\it loc.\,cit.} relies heavily on the characteristic $0$ hypothesis, we present in Section~\ref{Sec2} a classification of rigid nilpotent orbits
in exceptional Lie algebras valid over any algebraically closed field of good characteristic (at some point we have to rely on GAP as well).  

\begin{theorem}\label{T3}
Let $G$ be as in Theorem~\ref{T1} and suppose $p$ is a good prime for $G$. Then a nilpotent orbit $\OO$ is rigid in $\g$ if and only if so is the nilpotent orbit in $\g_\C$ which has the same label as $\OO$.
\end{theorem}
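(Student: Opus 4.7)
My plan is to show that Lusztig--Spaltenstein induction descends from characteristic zero to good characteristic in a label-preserving way, so that the class of non-induced (hence rigid) orbits is intrinsic to the root-system datum and independent of $p$. Since, in good characteristic, the nilpotent orbits in $\g$ and in $\g_\C$ are in natural bijection via their Bala--Carter labels (\cite{Car93}), the content of the theorem is precisely this comparison.

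First I would verify that induction is available over $k$: for a Levi subgroup $L\subseteq G$ with Lie algebra $\l$, a parabolic $P=LU$ with $\n=\Lie U$, and a nilpotent $L$-orbit $\OO'\subset\l$, one shows that $\OO'+\n$ is an irreducible $P$-stable subvariety meeting a unique $G$-orbit $\OO\subset\g$ in an open dense subset, that the usual dimension identity $\dim\OO=\dim\OO'+2\dim\n$ holds, and that $\OO$ is independent of $P$. This goes through as in characteristic zero because all the ingredients---Richardson's density argument, openness of the regular locus, Pommerening's analogue of the Jacobson--Morozov theorem in good characteristic, and the existence of associated cocharacters---remain valid. Write $\OO=\Ind_\l^\g(\OO')$.

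Next I would compare the two induction theories. Fix labels $\lambda,\mu$ and denote by $\OO,\OO'$ (respectively $\OO_\C,\OO'_\C$) the orbits bearing these labels in $\g,\l$ (respectively $\g_\C,\l_\C$). One has $\dim\Ind_\l^\g(\OO')=\dim\OO'+2\dim\n=\dim\OO'_\C+2\dim\n_\C=\dim\Ind_{\l_\C}^{\g_\C}(\OO'_\C)$, and the weighted Dynkin diagram of $\Ind_\l^\g(\OO')$ is read off from an associated cocharacter through a Richardson representative in $\OO'+\n$; this computation depends only on root-system combinatorics. Consequently $\Ind_\l^\g(\OO')$ and $\Ind_{\l_\C}^{\g_\C}(\OO'_\C)$ carry the same Bala--Carter label. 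It follows that the set of non-rigid orbits in $\g$, indexed by their labels, coincides with the corresponding set in $\g_\C$, which was classified by Elashvili and verified computationally in \cite{dG-E}. The equivalence in both directions of Theorem~\ref{T3} is then formal: if $\OO_\C$ is non-rigid, the label-preservation gives an explicit induction producing $\OO$; conversely, if $\OO$ is induced from $(\l,\OO')$, transporting to $\g_\C$ exhibits $\OO_\C$ as induced.

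The main obstacle is precisely the fidelity-of-labels claim in the previous paragraph, namely the assertion that $\Ind_\l^\g(\OO')$ has the same weighted Dynkin diagram as its characteristic-zero counterpart. I expect one dispatches this either by a general argument showing that the cocharacter conjugacy class associated to a Richardson representative is invariant under good reduction mod~$p$ (using representatives defined over $\Z$), or, following the authors' indication that GAP is needed, by a direct calculation in each exceptional type that exhibits a Richardson element in $\OO'+\n$ for every inducing pair and reads off its label explicitly. Once this case check is in place, Theorem~\ref{T3} follows from the char-$0$ classification.
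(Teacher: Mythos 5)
Your strategy—establish that Lusztig--Spaltenstein induction commutes with reduction mod~$p$ in a label-preserving way, then transport Elashvili's char.-$0$ classification—is sound as a blueprint, and it is essentially the strategy the paper uses to prove the \emph{stronger} Theorem~\ref{thm:distrib}. However, it is not the route taken for Theorem~\ref{T3} itself, and the comparison is instructive.

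The paper splits the equivalence asymmetrically. The direction ``$\OO_\C(D)$ rigid $\Rightarrow\OO(D)$ rigid'' is handled by Lemma~\ref{rigid1}, whose engine is the deformation inequality of Proposition~\ref{deform} (built from the one-parameter family $th+e$ over $k[t]$) and its Corollary~\ref{almostrigid}: a nilpotent element with perfect centraliser is rigid. This disposes of most char.-$0$ rigid orbits purely by computing $c(\g_e)$; the handful whose centraliser is not perfect are finished off by dimension-uniqueness or a sheet-dimension argument, never by comparing Dynkin labels of induced representatives. The converse direction is reduced via Proposition~\ref{prop3.2} (the odd-orbit criterion: non-odd orbits are automatically induced from $L(D)$), then for odd induced orbits the paper again uses dimension-uniqueness, the regular-subalgebra criterion of Corollary~\ref{princ}, and GAP for three residual cases. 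Thus a large fraction of the case analysis is done \emph{without} establishing label-fidelity of induction, which is the entire burden of your plan.

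The genuine gap in your proposal is precisely the step you flag as the ``main obstacle'': the assertion that the associated cocharacter of a Richardson representative in $\OO'+\n$ is determined by ``root-system combinatorics'' independently of $p$. This is not automatic—a Richardson representative defined over $\Z$ may land in a different $G$-orbit after reduction mod~$p$, and the associated cocharacter depends on the orbit, not the parabolic. Corollary~\ref{princ} in the paper gives exactly the kind of general good-reduction argument you envision, but it only applies under fairly restrictive hypotheses (simply-laced, $\gamma-\gamma'\notin\Phi$, $D(\Gamma)$ a union of Dynkin graphs); outside those hypotheses the paper falls back on explicitly computing Jordan block sizes of $\ad e_\Gamma$ via GAP and matching them against Lawther's tables. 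So your ``expect one dispatches this'' is doing a lot of work: without Proposition~\ref{deform}/Corollary~\ref{almostrigid} you would have to carry out the full label-fidelity verification of Theorem~\ref{thm:distrib} before obtaining Theorem~\ref{T3}, whereas the paper obtains Theorem~\ref{T3} with less computational input.
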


Given $m\in\mathbb{N}$ we let $\g_{(m)}$ denote the set of all $x\in \g$ such that $\dim\g_x=m$.
A subset $\mathcal{S}\subset\g$ is called a {\it sheet} of $\g$ if it coincides with
an irreducible component of one of the quasi-affine varieties $\g_{(m)}$.
According to a classical result of Borho \cite{Borho} the sheets of $\g_\C$ are parametrised by the $G_\C$-conjugacy classes of pairs $(\l_\C,\OO_0)$ where $\l_\C$ is a Levi subagebra of $\g_\C$ and $\OO_0$ is a rigid nilpotent orbit in $\l$. In Section~\ref{Sec2}, we give a comprehensive account of the theory of sheets in reductive Lie algebras $\g=\Lie(G)$ which satisfy the standard hypotheses 
(when $G$ is simple and not of type ${\rm A}_{rp-1}$
this is equivalent to saying that $p$ is a good prime for $G$). In particular, we show that every sheet of $\g$ contains a unique nilpotent orbit and Borho's classification of sheets remains valid under our assumptions on $G$.

For any sheet $\mathcal{S}$ of a complex exceptional Lie algebra $\g_\C=\Lie(G_\C)$, de Graaf and Elashvili determine the weighted Dynkin diagram of the unique nilpotent orbit $\N(\g_\C)\cap \mathcal{S}$ and give a very nice representative $e_{\Gamma}=\sum_{\gamma\in\Gamma}\,e_\gamma$ in $\N(\g_\C)\cap \mathcal{S}$ compatible with the combinatorial data that defines $\mathcal{S}$. 
Here $\Gamma=\Gamma(\mathcal{S})$ is a subset of roots the root system $\Phi$ of $\g_\C$ and $e_\gamma$ is a root vector of $\g_\C$ corresponding to $\gamma\in\Phi$. 
Each set $\Gamma$ is linearly independent in the vector space $\Q\Phi$ and the $G_\C$-orbit of $e_\Gamma$ is independent of the choices of root vectors 
$e_\gamma\in(\g_\C)_\gamma$.
There is a natural way to attach to $\Gamma$ a graph  $D(\Gamma)$, and it turned out (not surprisingly) that
in many cases the graphs thus obtained are admissible in the sense of Carter; cf. \cite{Car72} and the last column of the tables in \cite{dG-E}.

Since we have a natural analogue of $e_\Gamma$ in $\g=\Lie(G)$, where $G$ is a simple algebraic $k$-group of the same type as $G_\C$, and the discussion above indicates that there  is a  natural bijection between the sheets of $\g$ and $\g_\C$, one wonders whether a $k$-analogue of $e_\Gamma$ still belongs to the sheet of $\g$ given by the same data as $\mathcal{S}$.
We verify that for exceptional groups this is indeed the case and from the validity of the representatives of \cite{dG-E} in good characteristic we then deduce the following:
\begin{theorem}\label{thm:distrib} If $G$ is an exceptional algebraic $k$-group and $p={\rm char}(k)$ is a good prime for $G$, then the distribution of nilpotent orbits amongst the sheets of $\g$ agrees with that of $\g_\C$ and can be read off from the tables in \cite{dG-E}.\end{theorem}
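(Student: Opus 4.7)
The strategy combines the Borho-type parametrization of sheets established in Section~\ref{Sec2} with the explicit representatives $e_\Gamma$ tabulated in \cite{dG-E}. By that parametrization, sheets of $\g$ are indexed by $G$-conjugacy classes of pairs $(\l,\OO_0)$, where $\l$ is a Levi subalgebra and $\OO_0$ is a rigid nilpotent orbit in $[\l,\l]$, and the analogous statement holds over $\C$. Applying Theorem~\ref{T3} to the exceptional simple factors of each Levi $\l$, together with the known label-preserving bijection of rigid orbits in good characteristic for classical simple factors, yields a canonical label-preserving bijection $\Theta\colon\mathrm{Sheets}(\g)\to\mathrm{Sheets}(\g_\C)$. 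Since each sheet contains a unique nilpotent orbit, it suffices to show that the nilpotent orbit in $\mathcal{S}$ carries the same Bala--Carter label as the nilpotent orbit in $\Theta(\mathcal{S})$.

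For each sheet $\mathcal{S}_\C$ of $\g_\C$, \cite{dG-E} supplies a linearly independent subset $\Gamma\subset\Phi$ such that $e_\Gamma:=\sum_{\gamma\in\Gamma}e_\gamma$ represents the unique nilpotent orbit inside $\mathcal{S}_\C$. Since $\Gamma$ is a purely combinatorial datum and Chevalley root vectors make sense over any field, the same sum defines an element of $\g$. I would verify two things sheet-by-sheet using GAP: (i) that $e_\Gamma\in\g$ belongs to the sheet $\Theta^{-1}(\mathcal{S}_\C)$, and (ii) that the $G$-orbit of $e_\Gamma$ carries the Bala--Carter label recorded in \cite{dG-E}. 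Point (i) follows from the way $\Gamma$ is constructed: it is the disjoint union of the support of a rigid representative of $\OO_0$ inside $\l$ with a set of roots lying in a complementary nilradical, a characteristic-free description that places $e_\Gamma$ inside the Lusztig--Spaltenstein induction saturation of the pair $(\l,\OO_0)$, hence inside the sheet $\mathcal{S}(\l,\OO_0)$. Point (ii) is a direct finite computation: GAP returns $\dim\g_{e_\Gamma}$ and, when needed, an associated cocharacter and weighted Dynkin diagram via the Jacobson--Morozov theorem, which is available since $p$ is good.

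The main obstacle lies in point (ii) in the handful of cases in $\mathrm{E}_7$ and $\mathrm{E}_8$ where two distinct nilpotent orbits have the same centralizer dimension: there one must invoke the weighted Dynkin diagram or a finer invariant to pin down the orbit, and some bookkeeping is required to ensure that the extracted $\sl_2$-triple behaves correctly at the small good primes $p=5,7$ (so that reduction modulo $p$ of the characteristic-zero triple remains non-degenerate and the resulting grading of $\g$ has the expected dimensions). Once each $e_\Gamma$ has been matched to its predicted Bala--Carter orbit in $\g$, the theorem follows by transcribing the distribution tables of \cite{dG-E} through $\Theta$.
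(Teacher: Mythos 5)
Your high-level plan is the same as the paper's: combine the Borho-style parametrisation of sheets (Theorems~\ref{thm2.5},~\ref{T3}) with the explicit representatives $e_\Gamma$ from \cite{dG-E} and then verify case-by-case in GAP that each $e_\Gamma\in\g$ lands in the predicted orbit. However, two of the claims you treat as automatic or routine are actually the content of the proof, and a third technical point is misattributed.

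\textbf{Point (i) is not automatic.} You assert that $e_\Gamma$ belonging to the sheet $\Theta^{-1}(\mathcal{S}_\C)$ ``follows from the way $\Gamma$ is constructed,'' since $e_\Gamma=e_{\Gamma_0}+e_{\Gamma_1}$ with $e_{\Gamma_0}$ rigid in $\l$ and $e_{\Gamma_1}$ in the nilradical. But that only places $e_\Gamma$ in the closed set $\OO_L(e_0)+\n$, whose dense orbit is ${\rm Ind}_\l^\g\,\OO_L(e_0)$; membership in the dense orbit requires the precise equality $\dim\g_{e_\Gamma}=\dim\l_{e_0}$, and that equality can genuinely fail after reduction modulo $p$ (centraliser dimension can only go up). The paper establishes it by computing the Jordan block decomposition of $\ad\,e_\Gamma$ over $\Z$ and at the small primes, which determines $\dim\g_{e_\Gamma}$ directly. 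One should also verify that the rigid ``base'' $e_{\Gamma_0}$ carries the expected $L$-label, which the paper does through Corollary~\ref{princ} (the two awkward cases where $D(\Gamma_0)$ is not a union of Dynkin graphs are handled by uniqueness of orbit dimension inside $\l$); you do not address this.

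\textbf{Jacobson--Morozov is not the right tool.} You propose to identify orbits by extracting an $\sl_2$-triple and its weighted Dynkin diagram, claiming this is available since $p$ is good. It is not: the Coxeter number of ${\rm E}_8$ is $30$, so $p=7,11,\ldots$ are good but far below the Jacobson--Morozov bound. The correct substitute in good characteristic is Premet's optimal cocharacter / Pommerening's Bala--Carter theory, which the paper uses in the background, but in fact the paper sidesteps weighted Dynkin diagrams over $\F_p$ entirely and instead identifies orbits by Jordan block type of $\ad\,e_\Gamma$ (Theorem~\ref{J-blocks}). This invariant separates nilpotent orbits of exceptional $\g$ uniquely except for one coincidence --- $B_3$ versus $C_3$ in ${\rm F}_4$ at $p=7$ --- which the paper resolves separately via sheet ranks and Proposition~\ref{prop3.2}(ii). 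You anticipate ambiguities in ${\rm E}_7$, ${\rm E}_8$ but miss this genuine $F_4$ case. Finally, the paper closes the loop with a counting argument ($d(\OO)\ge d(\OO_\C)$ plus equal total sheet counts forces equality), whereas your $\Theta$-transport finishes more quickly once (i) and (ii) are established; that part of your outline is fine provided the verifications above are actually carried out.
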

For many elements $e_\Gamma$ our proof of Theorem~\ref{thm:distrib} is computer-free. For example, this is the case when $D(\Gamma)$ is a disjoint union of Dynkin graphs. However, at the end of the day we do rely on GAP (as did de Graaf and Elashvili) and we have decided to run our programme on all elements $e_\Gamma$ in order to obtain an independent confirmation of the computations in \cite{dG-E}.
At some point in the proof we have to show that the adjoint orbits of $e_\Gamma\in\g$ and its counterpart $e_{\Gamma,\C}\in\g_\C$ have the same Dynkin labels. We
deduce this by asking GAP to determine the Jordan block structure of each element $\ad\,e_\Gamma$ and a chosen representative of each nilpotent orbit in $\g$. 
Since in turned out, for $G$ exceptional, that the Jordan block decomposition of $\ad\,e$ identifies the orbit of $e$ almost uniquely (except when $p=7$ where the Jordan block decompositions of nilpotent elements of type $B_3$ and $C_3$ in Lie algebras of type ${\rm F}_4$ are the same),
this enabled us to determine the Dynkin label of $e_\Gamma$ and finish the proof of Theorem~\ref{thm:distrib}.

{\bf Acknowledgement.} We would like to thank Micha{\"e}l Bulois, Simon Goodwin, Willem de Graaf, George Lusztig and Dan Nakano for very useful discussions and email correspondence on the subject
of this paper.

\section{Sheets and induced nilpotent orbits in good characteristic}\label{Sec2}
\subsection{The standard hypotheses}\label{SH}
Let $G$ be a connected reductive group over an algebraically closed field $k$ of characteristic $p>0$
and $\g=\Lie(G)$.
Being the Lie algebra of an algebraic $k$-group, $\g$ carries a canonical $p$-th power map $x\mapsto x^{[p]}$ equivariant under the adjoint action of $G$. Given $x\in \g$ we denote by
$\g_x$ (resp., $G_x$), the centraliser of $x$ in $\g$ (resp., $G$).
 In order to apply \cite[Theorem~A]{P03} to all Levi subalgebras of $\g$ we shall assume, unless otherwise specified, that $p$ is a good prime for $G$, the derived subgroup of $G$ is simply connected and $\g$ admits an 
$(\Ad\, G)$-invariant non-degenerate symmetric bilinear form. 
We fix such a form  and call it $\kappa$.

The above conditions on $G$ are often referred as the {\it standard hypotheses}. If they hold then  for any $x\in G$ we have the equality $\g_x=\Lie(G_x)$ (the latter is sometimes expressed by saying that the 
scheme-theoretic centraliser of $e$ in $G$ is smooth).
One also knows that if $G$ satisfies the standard hypotheses then the centraliser of any semisimple element of the restricted Lie algebra $\g$
is a Levi subalgebra of $\g$. It is worth remarking that any simple, simply connected algebraic $k$-group of type other than ${\rm A}_{rp-1}$, where $p={\rm char}(k)$, satisfies the standard hypotheses if and only if $p$ is a good prime for $G$.

If $G$ satisfies the standard hypotheses then so does any Levi subgroup $L$ of $G$. It is well known that up to conjugacy any such subgroup is associated with a subset of a chosen basis of simple roots of $G$. Let $\Phi$ be root system of $G$ with respect to a maximal torus $T$ of $G$ and let $\Pi$ be a basis of simple roots in $\Phi$.  
Let $\t=\Lie(T)$. If $L$ is the standard Levi subgroup of $G$ associated with a subset $\Pi_0$ of $\Pi$ 
and $\l=\Lie(L)$ then $\t_0:= [\l,\l]\cap \t$ has dimension equal to $\dim \t-
|\Pi_0|$ (this follows from the fact that the derived subgroup of $L$ is simply connected). On the other hand, the orthogonal complement $\t_0^\perp:=\{t\in\t\,|\,\,\kappa(t,\t_0)=0\}$ contains $\z(\l)$ which forces $\dim\z(\l)\le 
\dim\t_0^\perp=\dim \t-|\Pi_0|$. Since  $\Lie(Z(L))\subseteq\z(\l)$ has dimension equal to $\dim \t-|\Pi_0|$ this shows that $\z(\l)=\Lie(Z(L))$ for any Levi subgroup $L$ of $G$. Furthermore, if $\Pi_0$ has no components of type ${\rm A}_{rp-1}$ then the structure theory of reductive Lie algebras yields $\z([\l,\l])=0$ implying $\l=[\l,\l]\oplus\z(\l)$.

Let $G_1,\ldots, G_s$ be the simple components of 
$G$ and $\g_i=\Lie(G_i)$. 
Let $T_i=T\cap G_i$ and $\t_i=\Lie(T_i)$.
If $G_i$ is not of type ${\rm A}_{rp-1}$ then it is well known that $\g_i$ is a simple Lie algebra and the restriction of $\kappa$ to $\g_i$ is non-degenerate. If $G_l$ has type ${\rm A}_{m(l)}
$ with $p\vert (m(l)+1)$ then $\g_l\cong\sl_{m(l)+1}$ 
as restricted Lie algebras and 
a nonzero scalar multiple, $\kappa'$, of the restriction of $\kappa$ to
$\g_l$ coincides with the trace form of the standard (vector) representation of $G_l$. Let $\alpha_1,\ldots,\alpha_{m(l)}$ be a basis of simple roots of the root system of type ${\rm A}_{m(l)}$ in Bourbaki's numbering. Then $\t_l$ has basis
$h_1,\ldots,h_{m(l)}$ such that
$\alpha_i(h_i)=2=\kappa'(h_i,h_i)$ for all $i$ and we also have that
$\kappa'(h_i,h_{i+1})=-1$ for $1\le i\le m(l)-1$, $\kappa'(h_i,h_j)=0$ for $j\not\in\{i-1,j,i+1\}$, and $h_i=[e_{\alpha_i},e_{-\alpha_i}]$ for some root vectors $e_{\pm\alpha_i}\in \g_l$ with $\kappa'(e_{\alpha_i},e_{-\alpha_i})=1$.
Suppose $\kappa_{\vert\g_l}=b\cdot\kappa'$ where $b=b(l)\in k^\times$. Since the restriction of $\kappa$ to $\t$ is 
non-degenerate, there exists an element $h_0\in \t$ orthogonal to all $\g_i$ with $i\ne m(l)$ and such that 
$\kappa(h_0,h_1)=b$ and $\kappa(h_0,h_i)=0$ for $2\le i\le m(l)$. 
In general, $h_0$ is not unique and we only know that $\kappa(h_0,h_0)=a$ for some $a=a(l)\in k$. 

Let $C_n$ denote the Cartan matrix of the root 
system of type ${\rm A}_n$ with entries reduced modulo $p$, so that $\det(C_n)=n+1\ ({\rm mod}\, p)$. 
Then the restriction of $\kappa$ to the $k$-span of $h_0,h_1,\ldots, h_{m(l)}$ is represented by the  matrix
\[A= \left[ \begin{array}{rrrrrr}
a & b & 0&\cdots &0&0 \\
b &2b &-b &\cdots &0&0 \\
0 & -b & 2b&\cdots&0 &0\\
\vdots&\vdots&\vdots&\cdots&\vdots&\vdots\\
0&0&0&\cdots&2b&-b\\
0&0&0&\cdots&-b&2b
\end{array} \right]\]
of order $m(l)+1$ which is non-singular because 
$\det(A)=ab^{m(l)}\det (C_{m(l)})-b^{m(l)+1}\det(C_{m(l)-1})=b^{m(l)+1}\ne 0$
by the first row Laplace expansion.
In particular, this shows that $h_0,h_1,\ldots, h_{m(l)}$ are linearly independent.
The $\g$-invariance of $\kappa$ yields
\begin{align*}
b&=\kappa(h_0,h_1)=
\kappa([h_0,e_{\alpha_1}],e_{-\alpha_1})=
({\rm d}\alpha_1)_e(h_0)\kappa(e_{\alpha_1},e_{-\alpha_1})=
b\cdot ({\rm d}\alpha_1)_e(h_0),\\
0&=\kappa(h_0,h_i)=
\kappa([h_0,e_{\alpha_i}],e_{-\alpha_i})=
({\rm d}\alpha_i)_e(h_0)\kappa(e_{\alpha_1},e_{-\alpha_1}),\qquad \, 2\le i\le m(l).
\end{align*}
It follows that $[h_0,e_{\pm\alpha_1}]=\pm e_{\alpha_1}$ and 
$[h_0,e_{\pm\alpha_i}]=0$ for $2\le i\le m(l)$.

We now set $\tilde{\g}_l:=kh_0\oplus \g_l$ if $G_l$ is of type ${\rm A}_{rp-1}$ and $\tilde{\g}_l:=\g_l$ otherwise. 
It is immediate from  above discussion  that if 
$G_l$ is of type ${\rm A}_{rp-1}$ then $\tilde{\g}_l$ is an $(\Ad\,G)$-stable ideal of $\g$
isomorphic to $\gl_{rp}$ as an abstract Lie algebra and the restriction of $\kappa$ to $\tilde{\g}_l$ is
non-degenerate. Moreover, the adjoint action of $G_l$ on  $\tilde{\g}_l$ is induced by that of $GL_{rp}$. 
Since $\kappa_{\vert\tilde{\g}_l}$  is
non-degenerate for all $l$ we can attach $h_0$'s to different components of type ${\rm A}_{rp-1}$ in such a way that they form an orthogonal set. Note also that the condition $\kappa(h_0,\g_i)=0$ forces $[h_0,\g_i]=0$
due to the $\g$-invariance of $\kappa$.
We thus deduce that $\g=\tilde{\g}_1\oplus\cdots\oplus\tilde{\g}_s\oplus\z$
where $\z$ is a central subalgebra of $\g$ contained in $\t$ and all direct summands in this decomposition are $(\Ad\, G)$-stable (in \cite[2.9]{Jan04} this result is mentioned without a proof).

Of course, the above has bearing on the description of the sheets of $\g$ which turns out to be the same as in the characteristic $0$ case. Since we shall require some rather detailed results on sheets of $\g$ and they are hard to find in the literature under our assumption on $G$, some short proofs will be given below.

\subsection{Sheets and decomposition classes}
Given $m\in\Z_{\ge 0}$ we denote by $\g_{(m)}$
the set of all $x\in\g$ for which $\dim G_x=m$. Each subset $\g_{(m)}$ is locally closed in the Zariski topology of $\g$ and the irreducible components of the $\g_{(m)}$'s are called the {\it sheets} of $\g$. 
\begin{remark}\label{r1} In prime characteristic there are two meaningful ways to define sheets of $\g$ and $\g^*$. Given $m\in\Z_{\ge 0}$ we let $\g_{[m]}$ (resp. $\g_{[m]}^*$) denote the subset of $\g$ (resp. $\g^*$) consisting of all elements whose stabiliser in $\g$
has dimension $m$. We call the irreducible components of the quasi-affine varieties $\g_{[m]}$ (resp. $\g_{[m]}^*$) the {\it infinitesimal sheets} of $\g$ (resp. $\g^*$). 
All infinitesimal sheets are $G$-stable, quasi-affine varieties.
If $G$ satisfies the standard hypotheses all these notions coincide, but in general there will be some subtle differences well worth investigating. 
We call a $G$-orbit $\g$ (resp. $\g^*$) {\it infinitesimally isolated} if it forms a single sheet in $\g$ (resp. $\g^*$). The problem of classifying all infinitesimally isolated nilpotent orbits in reductive Lie algebras is  interesting and wide open at the moment.
As an example, if $k$ has characteristic $p$ and $\g=\mathfrak{pgl}_p$ then the regular nilpotent orbit $\OO_{\rm reg}$ in $\g$ coincides with $\g_{[p]}$ and therefore forms a single infinitesimal sheet of $\g$.  This can be deduced from the following elementary result of linear algebra: if $X,Y\in \mathfrak{gl}_p$ and
$[X,Y]=I_p$ then both $X$ and $Y$ have a single Jordan block
of size $p$ and hence are mapped by the canonical homomorphism $\mathfrak{gl}_p\to \g$ to a commuting pair of elements in $\OO_{\rm reg}$. So in contrast with the classical case the regular nilpotent orbit in $\mathfrak{pgl}_p$ is infinitesimally isolated. 
On the other hand, if $\g=\sl_p$ then $\g_{[p]}$  consists of all $p\times p$ matrices $X$ such that $k^p$ is an indecomposable $k[X]$-module. From this it follows that $\g_{[p]}$ is a single infinitesimal sheet of $\g$, but
$\OO_{\rm reg}\subset \g_{[p]}$ is not infinitesimally isolated in $\g$. In fact, there are no infinitesimally isolated orbits in $\g=\sl_p$ at all, because $I_p\in\g$ and all sheets of $\g$ are invariant under the affine translations $x\mapsto x+\alpha I_p$ where $\alpha\in k$.
Finally, we mention that the study of infinitesimal sheets in $\g^*$ finds interesting applications in the modular representation theory of reductive Lie algebras; see \cite[Remark~5.6]{PSk}.
\end{remark}
 
Remark~\ref{r1} indicates that one cannot expect a uniform behaviour of sheets for all reductive Lie algebras over algebraically closed fields (even in good characteristic) and  partially justifies the assumptions we imposed on $\g$.
 
If $\l$ is a Levi subalgebra of $\g$ then its centre $\z(\l)$ is a toral subalgebra of $\g$ and $\l=\c_\g(\z(\l))$. We denote by $\z(\l)_{\rm reg}$ the set of all $t\in\z(\l)$ such that $\l=\c_\g(t)$. This is a nonempty Zariski open subset of $\z(\l)$. Given a nilpotent element $e_0\in \l$ we set $$\mathcal{D}(\l,e_0)\,:=\,(\Ad\, G)\cdot\big(e_0+\z(\l)_{\rm reg}\big)$$ and we call $\mathcal{D}(\l,e)$ the {\it decomposition class} of $\g$ associated with the pair $(\l,e_0)$.
If $x=x_s+x_n$ is the Jordan decomposition of $x\in \mathcal{D}(\l,e_0)$ then $x_s$ (resp., $x_n$) is $G$-conjugate to an element of $\z(\l)_{\rm reg}$ (resp., $e_0$). It follows that $\dim \g_x=\dim \l_{e_0}$ for every such $x$. Since there are finitely many nilpotent $L$-orbits in $\l=\Lie(L)$ and the number of 
$G$-conjugacy classes of Levi subalgebras of $\g$ is finite,
$\g$ contains finitely many decomposition classes. 
Each of them is an irreducible, locally closed subset of $\g$ contained in one of the $\g_{(r)}$'s and hence lies in a sheet of $\g$.  As the sheets are irreducible and their union is the whole of $\g$, there can be only finitely many, and so every sheet contains a unique open decomposition class of $\g$.
\begin{remark}
Since the number of nilpotent orbits is finite in all characteristics, the above discussion is essentially valid for all reductive Lie algebras over algebraically closed fields. However, in bad characteristic the notion of a decomposition class in $\g$ will involve the centraliser in $G$ of a semisimple element of $\g$ which is not always a Levi subgroup of $G$. In general, the connected component of that group is $G$-conjugate to a subgroup of $G$ obtained by base change from some standard {\it pseudo}-Levi subgroup of $G_\C$. 
\end{remark}
\subsection{The Zariski closure of a decomposition class}
Borho's classification of sheets of $\g$ is based on an important result of Borho--Kraft which provides a description of the Zariski closure of a decomposition class in $\g$; see \cite[Theorem~5.4]{BK}. Since
a modular version of this result valid under our assumptions on $G$ seems to be missing in the literature, a short proof will be given below.  

Let $\mathcal{D}(\l,e_0)$ be a decomposition class of $\g$ and let $\lambda\in X_*(L)$ be an optimal cocharacter for $e_0\in\l$ as defined in \cite[2.3]{P03}.
We adopt the notation of {\it loc.\,cit.} and write $\l(\lambda,i)$ for the $i$-weight space of the 
$1$-dimensional torus $\lambda(k^\times)$ acting on $\l$. By construction, $e_0\in\l(\lambda,2)$.
Given $k\in\Z$ we set 
$\l(\lambda,\ge k):=\bigoplus_{i\ge k}\,\l(\lambda,i)$.
Then $\p_\l(\lambda):=\l(\lambda,\ge 0)$ is the optimal parabolic subalgebra of $e_0\in \l$. We write $P_L(\lambda)$ for the corresponding parabolic subgroup of $L$. Note that $P_L(\lambda)=Z_L(\lambda)R_u(P_L)$ where $\Lie(Z_L(\lambda))=\l(\lambda,0)$ and $\Lie(R_u(P_L(\lambda)))=\l(\lambda,\ge 1)$. As explained in \cite[p.~346]{P03}, there exists a nonzero regular function $\varphi$ on $\l(\lambda,2)$ semi-invariant under the adjoint action of $Z_L(\lambda)$ and such that the orbit $(\Ad\,Z_L(\lambda))\cdot e_0$ coincides with the principal open subset $\l(\lambda,2)_\varphi=\{x\in \l(\lambda,2)\,|\,\,\varphi(x)\ne 0\}$ of $\l(\lambda,2)$.

Let $\g=\n_-\oplus\l\oplus\n_+$ be a parabolic decomposition of $\g$ associated with the Levi subalgebra $\l$. Then there exists a parabolic subgroup 
$P=LN_+$ such that $N_+=R_u(P)$ and $\n_+=\Lie(N_+)$.
We set $P(\l,e_0):=P_L(\lambda)N_+$, a parabolic subgroup of $G$ contained in $P$. It is easy to see that $\Lie(P(\l,e_0))=\p_\l(\lambda)\oplus\n_+$ and
$$\r(\l,e_0)\,:=\,\z(\l)\oplus\l(\lambda,\ge 2)\oplus\n_+$$ is a solvable ideal of $\Lie(P(\l,e_0))$ invariant under the adjoint action of $P(\lambda,e_0)$. Identifying direct sums with direct products we put
$$\r(\l,e_0)_{\rm reg}\,:=\,\z(\l)_{\rm reg}\oplus\l(\lambda,2)_\varphi\oplus \l(\lambda,\ge 3)\oplus\n_+.$$ By construction, $\r(\l,e_0)_{\rm reg}$ is a Zariski open subset of $\r(\l,e_0)$ invariant under the adjoint action of $P(\l,e_0)$. We mention for further references that $\z(\l)=\Lie(Z_G(L))$ and there exists 
a connected algebraic subgroup $R(\l,e_0)$ of $G$ with a maximal torus $Z_G(L)^\circ$ such that
$\r(\l,e_0)=\Lie(R(\l,e_0))$.
\begin{theorem}\label{thm2.3}
In the above notation, $\mathcal{D}(\l,e_0)=(\Ad\,G)\cdot\r(\l,e_0)_{\rm reg}$ and the Zariski closure of
$\mathcal{D}(\l,e_0)$ in $\g$ coincides with $(\Ad\,G)\cdot \r(\l,e_0)$.
\end{theorem}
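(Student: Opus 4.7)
The plan is to reduce both claims to an orbit analysis for $(\Ad P(\l,e_0))$ on $\r(\l,e_0)$, then handle the closure via a properness argument. The easy inclusion $\mathcal{D}(\l,e_0)\subseteq(\Ad G)\cdot\r(\l,e_0)_{\rm reg}$ is immediate, since $e_0\in(\Ad Z_L(\lambda))\cdot e_0=\l(\lambda,2)_\varphi$ yields $e_0+\z(\l)_{\rm reg}\subseteq\r(\l,e_0)_{\rm reg}$. That $\r(\l,e_0)$ and $\r(\l,e_0)_{\rm reg}$ are $(\Ad P(\l,e_0))$-stable is a direct $\lambda$-weight check: $N_+$ preserves $\n_+$ and shifts $\l$ into $\l\oplus\n_+$, $R_u(P_L(\lambda))$ shifts $\lambda$-weights strictly up inside $\l$, and $Z_L(\lambda)$ acts trivially on $\z(\l)$ and preserves $\l(\lambda,2)_\varphi$ by the semi-invariance of $\varphi$.

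For the reverse inclusion I would sweep an arbitrary $x=t+e+y+v\in\r(\l,e_0)_{\rm reg}$ (with $t\in\z(\l)_{\rm reg}$, $e\in\l(\lambda,2)_\varphi$, $y\in\l(\lambda,\ge 3)$, $v\in\n_+$) into $e_0+\z(\l)_{\rm reg}$ in two stages. The first stage uses $N_+$ to eliminate $v$: since $\c_\g(t)=\l$, the toral element $t$ acts on each $T$-root space of $\n_+$ as a nonzero scalar, so a root-by-root triangular sweep using root subgroups of $N_+$ in a suitable order (compatible with the $Z(L)^\circ$-grading on $\n_+$) produces $n\in N_+$ with $\Ad(n)\,x\in t+\l(\lambda,\ge 2)$. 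The second stage works inside $\l$, using that Levi subgroups inherit the standard hypotheses of \S\ref{SH}: by the optimal-parabolic theory developed in \cite[2.3]{P03}, the $R_u(P_L(\lambda))$-orbit of any element of $\l(\lambda,2)_\varphi+\l(\lambda,\ge 3)$ meets $\l(\lambda,2)_\varphi$, and a further $Z_L(\lambda)$-conjugation sends that element onto $e_0$. Because $R_u(P_L(\lambda))\cdot N_+$ and $Z_L(\lambda)$ both fix $\z(\l)$ pointwise, the semisimple part $t$ is preserved throughout, so the result lies in $e_0+\z(\l)_{\rm reg}$.

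For the closure statement I would consider the morphism $\mu\colon G\times^{P(\l,e_0)}\r(\l,e_0)\to\g$, $[g,r]\mapsto\Ad(g)\,r$. Since $G/P(\l,e_0)$ is projective and $\r(\l,e_0)$ is closed and $P(\l,e_0)$-stable in $\g$, the morphism $\mu$ is proper; hence $(\Ad G)\cdot\r(\l,e_0)$ is closed and irreducible in $\g$. It contains the image $\mathcal{D}(\l,e_0)$ of the dense open subset $\r(\l,e_0)_{\rm reg}\subseteq\r(\l,e_0)$, so it must coincide with $\overline{\mathcal{D}(\l,e_0)}$. The main obstacle is the first-stage $N_+$-sweep: in characteristic zero it is subsumed by a single application of $\exp\ad$, whereas here one must realize it as an algebraic triangular system whose solvability rests on the invertibility of $\ad t$ on each root space of $\n_+$. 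Once this is handled, the second stage is formal from \cite{P03} and the closure statement follows.
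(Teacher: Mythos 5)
Your proposal is correct and follows essentially the same strategy as the paper: first identify $\mathcal{D}(\l,e_0)$ with $(\Ad\,G)\cdot\r(\l,e_0)_{\rm reg}$ by computing the $(\Ad\,P(\l,e_0))$-orbits in $\r(\l,e_0)_{\rm reg}$, then obtain the closure statement from the properness of the collapsing map $G\times^{P(\l,e_0)}\r(\l,e_0)\to\g$ (Jantzen's Lemma~8.7(c)). The closure argument is identical. For the orbit computation you run the argument in reverse (sweeping an arbitrary $x\in\r(\l,e_0)_{\rm reg}$ down to $e_0+\z(\l)_{\rm reg}$ rather than building the orbit of $z+e_0$ upward), but that is purely presentational; the decompositions used are the same.

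The one substantive difference is how you handle the $N_+$-stage, which you rightly flag as the main obstacle in prime characteristic. You propose a root-by-root triangular sweep whose solvability rests on the invertibility of $\ad(t+e+y)$ on each graded piece of $\n_+$; this is correct but requires some bookkeeping (ordering the $Z(L)^\circ$-weights, controlling the higher-order terms of $\Ad u_\alpha(s)$, termination). The paper sidesteps that bookkeeping by appealing to Rosenlicht's closed-orbit theorem: $(\Ad\,N_+)\cdot(z+e_0)$ is a Zariski-closed subvariety of the affine space $z+e_0+\n_+$, and the invertibility of $\ad(z+e_0)$ on $\n_+$ gives it full dimension, so it must be all of $z+e_0+\n_+$. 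The same Rosenlicht device is used a second time for $(\Ad\,R_u(P_L(\lambda)))\cdot e_0=e_0+\l(\lambda,\ge 3)$, which you delegate to \cite[2.3]{P03}. Both routes are valid and rest on the same linear-algebraic core; Rosenlicht's theorem just gives a cleaner, characteristic-free packaging of the step you correctly identified as delicate.
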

\begin{proof}
Let $z\in\z(\l)_{\rm reg}$. Then $(\Ad\,P_L(\lambda))\cdot(z+e_0)=
z+(\Ad\,P_L(\lambda))\cdot e_0$.
Since
$[\p_\l(\lambda),e_0]=\l(\lambda,\ge2)$ by \cite[Theorem~2.3(iv)]{P03} and the orbit $(\Ad\,R_u(P_L(\lambda)))\cdot e_0\subseteq e_0+\l(\lambda,\ge 3)$ is Zariski closed by Rosenlicht's theorem \cite[Theorem~2]{Ro61},  the equality
$(\Ad\,R_u(P_L(\lambda)))\cdot e_0\,=\,e_0+\l(\lambda,\ge 3)$ must hold. It implies that $$
(\Ad\,P_L(\lambda))\cdot(z+e_0)=z+(\Ad\,Z_L(\lambda))
\cdot (e_0+\l(\lambda,\ge 3))=z+\l(\lambda,2)_\varphi+\l(\lambda,\ge 3).$$
As $z\in\z(\l)_{\rm reg}$ and $e_0$ is a nilpotent element of $\l$, it must be the endomorphism $\ad(z+e_0)$ acts invertibly on $\n_+$. As $(\Ad\,N_+)\cdot (z+e_0)\subseteq z+e_0+\n_+$, another application of Rosenlicht's theorem yields $(\Ad\,N_+)\cdot (z+e_0)=z+e_0+\n_+$. It follows that
\begin{eqnarray}\label{e0}(\Ad\,P(\l,e_0))\cdot (z+e_0)&=&(\Ad\,P_L(\lambda))\cdot\big((\Ad\,N_+)\cdot (z+e_0)\big)\\
\nonumber &=&(\Ad\,P_L(\lambda))\cdot(z+e_0+\n_+)
\,=\,\n_+ +(\Ad\,P_L(\lambda))\cdot(z+e_0)\\
\nonumber&=&
z+\l(\lambda,2)_\varphi+\l(\lambda,\ge 3)+\n_+.
\end{eqnarray}
As this holds for all $z\in\z(\l)_{\rm reg}$ we now deduce that $(\Ad\,P(\l,e_0))\cdot (\z(\l)_{\rm reg}+e_0)=\r(\l,e_0)_{\rm reg}$ and
$\mathcal{D}(\l,e_0)=
(\Ad\,G)\cdot\r(\l,e_0)_{\rm reg}$.
As $\r(\l,e_0)_{\rm reg}$ is Zariski open in $\r(\l,e_0)$, the Zariski closure of $\mathcal{D}(\l,e_0)$ contains $(\Ad\,G)\cdot \r(\l,e_0)$. Since
the latter set is the image of the action morphism
$$G\times^{P(\l,e_0)}\r(\l,e_0)\,\longrightarrow\,\,\g,\qquad(g,r)\mapsto (\Ad\,g)\cdot r,$$ and $P(\l,e_0)$ is a parabolic subgroup of $G$, the set $(\Ad\,G)\cdot \r(\l,e_0)$ is Zariski closed in $\g$; see \cite[Lemma~8.7(c)]{Jan04}, for example.
Since $(\Ad\,G)\cdot\r(\l,e_0)_{\rm reg}$ is  Zariski dense in $(\Ad\,G)\cdot \r(\l,e_0)$ we  conclude that 
$\overline{\mathcal{D}(\l,e_0)}=
(\Ad\,G)\cdot \r(\l,e_0)$ thereby
 completing the proof.
\end{proof}
\begin{remark}\label{r4}
Since $\lambda\in X_*(T)$ is an optimal cocharacter for $e_0\in\N(\l)$ in the sense of the Kempf--Rousseau theory, it follows from the proof of Theorem~\ref{thm2.3} that for any $x\in\r(\l,e_0)_{\rm reg}$ the inclusion
$G_x\subset P(\l,e_0)$ holds. In the case where ${\rm char}(k)=0$ this was first observed in \cite[Zuzatz~5.5(e)]{BK}.
\end{remark}
\subsection{Restricting the adjoint quotient map to a decomposition class} Let $\t=\Lie(T)$ be a maximal toral subalgebra of $\g$ containing $\z(\l)$ and let $W=N_G(T)/T$ be the Weyl group of $G$. Let $n=\dim G$ and $\ell=\rk G$. 
Under our assumptions on $G$ the invariant ring
$k[\g]^G$ is freely generated by $\ell$ homogeneous regular functions $f_1,\ldots, f_\ell$ and the restriction map $k[\g]\twoheadrightarrow k[\t]$
induces a natural isomorphism of invariant rings $j\colon\,k[\g]^G\to\,k[\t]^W$.
Furthermore, the categorical quotient morphism
$F\colon\,\g\to\g/\!\!/G\cong\t/W$ sending any $x\in\g$ to
$\big(f_1(x),\cdots,f_l(x)\big)\in{\Bbb A}^\ell$ is flat, surjective, and all its fibres are irreducible complete intersections of dimension $n-\ell$ in $\g$ consisting of finitely many $G$-orbits. 
If $x\in \g$ then $f(x)=f(x_s)$ for every $f\in k[\g]^G$,
and for every $\xi\in{\Bbb A}^\ell$  there is a
unique orbit $W\cdot t\subset \t$ such that
the fibre $F^{-1}(\xi)$ consists of all $x\in \g$ such that 
$(\Ad\,g)\cdot x_s=t$ for some $g\in G$. The special fibre $F^{-1}(0)$ coincides with the nilpotent cone $\N(\g)$ of $\g$.
All these results are well known and can be found in \cite[Sect.~7]{Jan04}, for example. 

In this subsection we are concerned with the restriction of the adjoint quotient morphism $F$ to the Zariski closure of a decomposition class.
\begin{prop}\label{prop2.4} Let $\bar{F}$ denote the restriction of the adjoint quotient morphism $F\colon\g\to\t/W$ to $\overline{\mathcal{D}(\l,e_0)}$. Then
all irreducible components of the fibres of $\bar{F}$
 have dimension $n-\dim\l_{e_0}$, the special fibre
$\bar{F}^{-1}(0)=\overline{\mathcal{D}(\l,e_0)}\cap\N(\g)$ is irreducible, and $\dim \mathcal{D}(\l,e_0)=n-\dim\l_{e_0}+\dim\z(\l)$.
\end{prop}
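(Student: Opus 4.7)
The plan is to derive all three assertions from the explicit description $\overline{\mathcal{D}(\l,e_0)}=(\Ad\,G)\cdot\r(\l,e_0)$ of Theorem~\ref{thm2.3}, combined with a direct analysis of the restriction of $F$ to $\r(\l,e_0)$. The main obstacle will be to control the Jordan decomposition on $\r(\l,e_0)$ well enough to see that $F|_{\r(\l,e_0)}$ collapses onto its ``semisimple coordinate'' $\z(\l)$; once this is in hand, the remaining statements follow from elementary dimension counts.

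For part (iii) I would argue as follows. The optimal cocharacter theory underlying \cite{P03} gives $\dim\l_{e_0}=\dim\l(\lambda,\ge 0)-\dim\l(\lambda,\ge 2)$ (equivalently, $\ad\,e_0\colon\l(\lambda,i)\to\l(\lambda,i+2)$ is surjective for $i\ge 0$). Combined with $\dim P(\l,e_0)=\dim\l(\lambda,\ge 0)+\dim\n_+$ and $\dim\r(\l,e_0)=\dim\z(\l)+\dim\l(\lambda,\ge 2)+\dim\n_+$, the surjective proper morphism $\pi\colon G\times^{P(\l,e_0)}\r(\l,e_0)\twoheadrightarrow\overline{\mathcal{D}(\l,e_0)}$ from Theorem~\ref{thm2.3} yields $\dim\overline{\mathcal{D}(\l,e_0)}\le n-\dim\l_{e_0}+\dim\z(\l)$. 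The reverse inequality follows because $\mathcal{D}(\l,e_0)$ contains each $G$-orbit of $z+e_0$, $z\in\z(\l)_{\rm reg}$, of dimension $n-\dim\l_{e_0}$; by the uniqueness of Jordan decomposition two such orbits coincide only when $z,z'$ are conjugate under the finite group $N_G(L)/L$, so together they contribute $\dim\z(\l)$ extra dimensions.

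The key point for (i) and (ii) is that $F|_{\r(\l,e_0)}$ factors through the projection $\r(\l,e_0)\twoheadrightarrow\z(\l)$ followed by $\z(\l)\hookrightarrow\t\to\t/W$. To see this, fix $z\in\z(\l)_{\rm reg}$ and $u'\in\l(\lambda,\ge 2)$. Then $\ad\,z$ acts invertibly on $\n_+$ (its weights there are the values $\alpha(z)$ for $\alpha\in\Phi^+\setminus\Phi(\l)$, none of which vanishes) while $\ad\,u'$ is nilpotent on $\n_+$, so $\ad(z+u')$ is invertible on $\n_+$; Rosenlicht's theorem then gives $(\Ad\,N_+)\cdot(z+u')=z+u'+\n_+$, exactly as in the proof of Theorem~\ref{thm2.3}. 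Hence every element of $z+\l(\lambda,\ge 2)+\n_+$ is $N_+$-conjugate to some $z+u'$ with $u'\in\l(\lambda,\ge 2)$, and since $[z,u']=0$ the sum $z+u'$ is its own Jordan decomposition; therefore $F(z+u)=F(z+u')=F(z)$. The two morphisms $\r(\l,e_0)\to\t/W$ thus agree on the Zariski-dense open subset $\z(\l)_{\rm reg}\oplus\l(\lambda,\ge 2)\oplus\n_+$ of $\r(\l,e_0)$, and hence agree everywhere.

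Consequently every fibre of $F|_{\r(\l,e_0)}$ is a finite disjoint union of affine subspaces of the form $z+\l(\lambda,\ge 2)+\n_+$, hence equidimensional of dimension $\dim\r(\l,e_0)-\dim\z(\l)$. Pushing through $\pi$ bounds every non-empty fibre of $\bar F$ above by $n-\dim\l_{e_0}$; the matching lower bound is supplied by upper semicontinuity of fibre dimension applied to the dominant morphism $\bar F\colon\overline{\mathcal{D}(\l,e_0)}\twoheadrightarrow V$, where $V\subseteq\t/W$ is the closed irreducible image of $\z(\l)$, of dimension $\dim\z(\l)$ by (iii). This yields (i). For (ii), the unique preimage of $0\in\t/W$ inside $\z(\l)$ is $0$ itself, so $(F|_{\r(\l,e_0)})^{-1}(0)=\l(\lambda,\ge 2)\oplus\n_+$ and hence
$$\bar F^{-1}(0)=(\Ad\,G)\cdot\big(\l(\lambda,\ge 2)\oplus\n_+\big)=\pi\big(G\times^{P(\l,e_0)}(\l(\lambda,\ge 2)\oplus\n_+)\big),$$
which is the image of an irreducible variety and is therefore irreducible.
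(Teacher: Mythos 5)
Your proof is correct, but it follows a genuinely different route than the paper. The paper works directly with the fibre $X$ of $\bar F$ inside $\g$: it picks $x\in X\cap\r(\l,e_0)$, conjugates $x_s$ into $\z(\l)$ using that $Z_G(L)^\circ$ is a maximal torus of $R(\l,e_0)$, and from this derives the pointwise fibration formula
$X=\bigcup_{t\in(W\cdot x_s)\cap\z(\l)}(\Ad\,G)\cdot\big(t+\l(\lambda,\ge 2)+\n_+\cap\g_t\big)$.
It then deduces the dimension statements from this by combining \cite[Theorem~14.8(a)]{Eisenbud} (to locate the locus of minimal fibre dimension) with Chevalley semi-continuity of $\dim\g_x$ (to rule out larger components), and obtains (ii) as the special case $x_s=0$. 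Your argument instead isolates the cleaner structural observation that $F|_{\r(\l,e_0)}$ literally \emph{factors} through the linear projection onto the $\z(\l)$-coordinate, proved by density from the open chunk $\z(\l)_{\rm reg}\oplus\l(\lambda,\ge 2)\oplus\n_+$ via Rosenlicht plus Jordan decomposition. This makes the fibres of $F|_{\r(\l,e_0)}$ explicit finite disjoint unions of affine subspaces; pushing forward through the proper surjection $\pi\colon G\times^{P(\l,e_0)}\r(\l,e_0)\twoheadrightarrow\overline{\mathcal{D}(\l,e_0)}$ gives the upper bound on fibre dimension, and the elementary fibre-dimension theorem for dominant morphisms between irreducible varieties supplies the lower bound, so you avoid invoking Eisenbud and Chevalley entirely. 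Your proof of (ii), presenting $\bar F^{-1}(0)$ as $\pi\big(G\times^{P(\l,e_0)}(\l(\lambda,\ge 2)\oplus\n_+)\big)$, coincides in substance with the paper's appeal to its formula (3) there. Two small presentational points: the equality $\dim V=\dim\z(\l)$ should be justified by finiteness of $\t\to\t/W$ rather than ``by (iii)'', and the lower-bound step for (iii) (``together they contribute $\dim\z(\l)$ extra dimensions'') would benefit from being phrased as a fibre-dimension count for the morphism $\z(\l)_{\rm reg}\times G\to\mathcal{D}(\l,e_0)$, $(z,g)\mapsto(\Ad\,g)(z+e_0)$, whose fibres have dimension $\dim\l_{e_0}$ by your Jordan-decomposition argument — but the mathematics is sound.
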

\begin{proof}
To ease notation we  put $n(\l,e_0)=\dim\mathcal{D}(\l,e_0)$. Let $X$ be a fibre of $\bar{F}$ and $x\in X$.
By Theorem~\ref{thm2.3}, we may assume without loss that $x\in \r(\l,e_0)$. Since $\r(\l,e_0)=\Lie(R(\l,e_0))$ and $Z_G(L)^\circ$ is a maximal torus of $R(\l,e_0)$, the standard fact in the theory of algebraic groups that maximal tori are conjugate yields that the semisimple part of $x$ is $R(\l,e_0)$-conjugate to an element of $\z(\l)=\Lie(Z_G(L)^\circ)$; see \cite[11.8]{Borel}. 
In view of the discussion above this implies that
\begin{equation}\label{e1}
X\,=\bigcup_{t\in\, (W\cdot x_s)\,\cap\, \z(\l)}
(\Ad\,G)\cdot \big(t+\l(\lambda,\ge 2)+\n_+\cap \g_t\big).\end{equation} (As $t\in\z(\l)$ we have $\l(\lambda,\ge 2)\subset \g_t$.) As $\z(\l)\subset\overline{\mathcal{D}(\l,e_0)}$
this implies that $\bar{F}$ maps $\overline{\mathcal{D}(\l,e_0)}$ {\it onto} $\z(\l)/W$, the image of $\z(\l)$ in $\t/W$. Since $W$ is a finite group the quotient morphism $\t\to\t/W$ is finite, hence closed
and has finite fibres. So $\z(\l)/W$ is a closed subset of $\t/W$ of dimension $d:=\dim\z(\l)$. 
It follows that the minimal dimension of the fibres of
$\bar{F}$ equals $n(\l,e_0)-d$. 

On the other hand, it follows from \cite[Theorem~14.8(a)]{Eisenbud} that
the union of those irreducible components of the fibres of $\bar{F}$ that have dimension $> n(\l,e_0)-d$
is Zariski closed in $\overline{\mathcal{D}(\l,e_0)}$. 
That union cannot coincide with $\overline{\mathcal{D}(\l,e_0)}$ by the preceding remark.
Since all irreducible components of the fibres of $\bar{F}$
contain open $G$-orbits
and $\mathcal{D}(\l,e_0)$ is dense in
$\overline{\mathcal{D}(\l,e_0)}$, there exists $z\in \z(\l)_{\rm reg}$ such that $n-\dim \g_{z+e_0}=n(\l,e_0)-d$. Since $z$ is the semisimple part of $z+e_0$ we have the equality $\g_{z+e_0}=\l_{e_0}$. So $\dim \g_{z+e_0}=\dim\l_{e_0}$ implying $n(\l,e_0)=n-\dim\l_{e_0}+d$. This proves the last statement of the proposition.

If one of the fibres of $\bar{F}$ has an irreducible component of dimension $>n(\l,e_0)-d=n-\dim\l_{e_0}$, then by the same token  it contains an open $G$-orbit of the same dimension and hence an element $v\in\overline{\mathcal{D}(\l,e_0)}$ such that $\dim \g_v<\dim\l_{e_0}$.
Since the set $\{x\in \overline{\mathcal{D}(\l,e_0)}\,|\,\,\dim \g_x\ge \dim\l_{e_0}\}$ is Zariski closed by Chevalley's semi-continuity theorem and  
$\dim\g_x=\dim\l_{e_0}$ for all $x\in{\mathcal{D}(\l,e_0)}$  it follows that $\dim \g_x\ge\dim\l_{e_0}$ for all $x\in \overline{\mathcal{D}(\l,e_0)}$. This contradiction shows that all irreducible components of the fibres of $\bar{F}$ have the same dimension.

If $X$ is the special fibre of $\bar{F}$, then $x_s=0$ and (\ref{e1}) gives
\begin{equation}\label{e2}
\overline{\mathcal{D}(\l,e_0)}\cap\N(\g)=(\Ad\,G)
\cdot (\l(\lambda,\ge 2)+\n_+).
\end{equation} Therefore, the variety $\overline{\mathcal{D}(\l,e_0)}\cap\N(\g)$ is irreducible which completes the proof.
\end{proof} 
 \begin{remark}\label{r5} 
If $\mathcal{S}$ is a sheet of $\g$ whose open decomposition class equals $\mathcal{D}(\l,e_0)$ then by maximality $\mathcal{S}$ must coincide with the set of {\it all} $G$-orbits of maximal dimension contained in $\overline{\mathcal{D}(\l,e_0)}$.
Therefore, Proposition~\ref{prop2.4} implies that every sheet of $\g$ contains a unique nilpotent orbit. Since $W$ is a finite group, the quotient morphism $\t\to\t/W$ is open and closed. As $\t_{\rm reg}$ is open in $\t$, the set $\t_{\rm reg}/W$ is open in $\t/W$. So the proof of
Proposition~\ref{prop2.4} also shows that
any decomposition class $\mathcal{D}(\l,e_0)=\bar{F}^{-1}(\t_{\rm reg}/W)$ of $\g$ is Zariski open in its closure. 
It is worth mentioning that all $G$-stable pieces in the union~(\ref{e1}) are Zariski closed in $\g$. This follows from \cite[Lemma~8.7(c)]{Jan04} (see the end 
of the proof of Theorem~\ref{thm2.3} for a similar argument).
If $\mathcal{D}(\l,e_0)$ is the open decomposition class
of a sheet $\mathcal{S}$ and $\OO$ is the nilpotent orbit contained in $\mathcal{S}$ then it follows from Proposition~\ref{prop2.4} that $\dim\z(\l)=\dim\mathcal{S}-
\dim\OO.$ This number is called the {\it rank} of $\mathcal{S}$ and denoted $\rk(\mathcal{S})$. It is immediate from Proposition~\ref{prop2.4} that under our ssumptions on $G$ a sheet $\mathcal{S}$ of $\g$ is a single nilpotent $G$-orbit if and only if $\rk(\mathcal{S})=0$.

\subsection{Induced nilpotent orbits and sheets}
Given $x\in \g$ we denote by  $\mathcal{O}(x)$ the adjoint $G$-orbit of $x$. If $x$ lies in a Levi subalgebra $\l=\Lie(L)$ of $\g$ we set $\mathcal{O}_L(x):=(\Ad\,L)\cdot x$. 
\end{remark}
Combining Theorem~\ref{thm2.3} with  Proposition~\ref{prop2.4} we observe that
the unique open $G$-orbit $\mathcal{O}(e)$ in $\overline{\mathcal{D}(\l,e_0)}\cap\N(\g)$ intersects densely with
$\l(\lambda,2)_\varphi\oplus\l(\lambda,\ge 3)\oplus\n_+$
and has the property that $\dim \g_e=\dim\l_{e_0}$.
As $\l(\lambda,2)_\varphi\oplus\l(\lambda,\ge 3)\,=\,(\Ad\,P_L(\lambda))\cdot e_0$ by our choice of 
$\lambda$ we see that $\mathcal{O}(e)$
intersects densely with $\mathcal{O}_L(e_0)+\n_+$.
In other words, the orbit $\mathcal{O}(e)$ coincides with ${\rm Ind}_\l^\g\,\OO_L(e_0)$, the nilpotent $G$-orbit obtained from $\mathcal{O}_L(e_0)$ by 
Lusztig--Spaltenstein induction.

As in the characteristic zero case, induction of nilpotent orbits
is transitive in the following sense: if $L,L'$ are Levi subgroups of $G$ with $L\subset L'$ and $e_0$ is 
a nilpotent elements of $\l=\Lie(L)$ then   
\begin{equation}\label{e3}
{\rm Ind}_{\l}^{\g}\,\OO_L(e_0)\,=\,{\rm Ind}_{\l'}^{\g} \,\big({\rm Ind}_{\l}^{\l'}\,\OO_{L}(e_0)\big)
\end{equation}
where $\l'=\Lie(L')$. In order to see this it suffices to note that both orbits in (\ref{e3}) have the same dimension, the Zariski closure  of the decomposition class 
$(\Ad\,L')\cdot\big(e_0+\z(\l)_{\rm reg}\big)$ in $\l'$ contains 
$\z(\l')_{\rm reg}+e$ for some $e\in {\rm Ind}_{\l}^{\l'}\,\OO_{L}(e_0)$, and $\z(\l')\subset \z(\l)$. The statement then follows from Proposition~\ref{prop2.4}.
\begin{remark}\label{r5'} Since the decomposition class $\mathcal{D}(\l, e_0)$ is evidently independent of the
choice of a triangular decomposition of $\g$ containing $\l$, the above discussion implies
that induction of orbits is independent of the choice of a parabolic subalgebra of $\g$
containing $\l$. This was first
observed by Lusztig--Spaltenstein for unipotent classes in reductive groups over algebraically closed fields of arbitrary characteristic; see \cite[Theorem~2.2]{LSp}. The argument in {\it loc.\,cit.} relied on the theory of complex representations of
finite groups of Lie type and some results of Mizuno. More recently Lusztig returned to this subject
and found another proof of the fact that induction is independent of the choice of a parabolic. His argument is similar to ours but still works in the setting of a reductive group, possibly disconnected; see \cite[Lemma~10.3(a)]{L04}.
In principle, one could 
use a Bardsley--Richardson projection $\pi\colon\,G\to \g$ from \cite{BR} to obtain a Lie algebra analogue of  \cite[Theorem~2.2]{LSp}. Indeed, in our situation $\pi$ provides a nice $G$-equivariant isomorphism between the unipotent variety of $G$ and $\N(\g)$. However, to the best of our knowledge this line of reasoning was never carried out in the literature under our assumption on $G$ and we
felt that arguing in the spirit of \cite{BK} was more relevant for our purposes (see e.g. Remark~\ref{r4}).
\end{remark}
A nilpotent element $e_0$ in a Levi subalgebra $\l=\Lie(L)$ of $\g$ is called {\it rigid} if the orbit $\OO_L(e_0)$ cannot be obtained by Lusztig--Spaltenstein induction from a proper Levi subalgebra of $\l$.
\begin{theorem}\label{thm2.5}
If $\mathcal{S}$ is a sheet of $\g$  and $\mathcal{D}(\l,e_0)$ is the open decomposition class of $\mathcal{S}$, then $e_0$ is rigid in $\l$ and the unique nilpotent orbit in $\mathcal{S}$ has the form
$\OO={\rm Ind}_\l^\g\,\mathcal{O}_L(e_0)$. Moreover, $\dim\g_x=\dim
\l_{e_0}$ for any $x\in \OO$. Conversely, for any pair
$(\l,e_0)$, where $\l$ is a Levi subalgebra of $\g$ and $e_0$ is a rigid nilpotent element of $\l$, there is a unique sheet of $\g$ whose open decomposition class equals $\mathcal{D}(\l,e_0)$.
\end{theorem}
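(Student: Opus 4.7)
Let $\mathcal{S}$ be a sheet with open decomposition class $\mathcal{D}(\l,e_0)$ and set $m=\dim\l_{e_0}$. By Remark~\ref{r5} the sheet contains a unique nilpotent orbit $\OO$ of dimension $n-m$, and the identifications $\OO=\mathrm{Ind}_\l^\g\OO_L(e_0)$ and $\dim\g_x=\dim\l_{e_0}$ for $x\in\OO$ are immediate from the paragraph preceding the theorem. The substantive point is rigidity of $e_0$. Suppose for contradiction that $\OO_L(e_0)=\mathrm{Ind}_{\l''}^{\l}\OO_{L''}(e_0')$ for some proper Levi $\l''\subsetneq\l$ and some nilpotent $e_0'\in\l''$. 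I would first verify the inclusion $\mathcal{D}(\l,e_0)\subset\overline{\mathcal{D}(\l'',e_0')}$: pick $z+e_0$ with $z\in\z(\l)_{\rm reg}$; Theorem~\ref{thm2.3} applied inside $\l$ to $e_0'\in\l''$ produces $g\in L$ with $(\Ad g)(e_0)\in\l''(\lambda'',\ge 2)+(\n_+''\cap\l)$, and since $g$ centralises $z\in\z(\l)$ the element $(\Ad g)(z+e_0)$ lies in $\z(\l'')\oplus\l''(\lambda'',\ge 2)\oplus\n_+''=\r(\l'',e_0')$. By Proposition~\ref{prop2.4} both classes lie in $\g_{(m)}$, and the strict inequality $\dim\z(\l'')>\dim\z(\l)$ yields $\dim\mathcal{D}(\l'',e_0')>\dim\mathcal{D}(\l,e_0)$. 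Hence $\overline{\mathcal{D}(\l'',e_0')}\cap\g_{(m)}$ is an irreducible subset of $\g_{(m)}$ strictly containing $\mathcal{D}(\l,e_0)$, contradicting the maximality of the irreducible component $\mathcal{S}$.

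\textbf{Converse direction.} Let $(\l,e_0)$ be a pair with $e_0$ rigid in $\l$ and set $m=\dim\l_{e_0}$. The class $\mathcal{D}(\l,e_0)$ is irreducible and contained in $\g_{(m)}$, so it lies in a unique irreducible component $\mathcal{S}$ of $\g_{(m)}$, which yields uniqueness of the sheet. To show $\mathcal{D}(\l,e_0)$ is the open decomposition class of $\mathcal{S}$, apply the first direction to $\mathcal{S}$ to get an open decomposition class $\mathcal{D}(\l',e_0')$ with $e_0'$ rigid in $\l'$. Since $\mathcal{D}(\l,e_0)\subset\overline{\mathcal{D}(\l',e_0')}$, Theorem~\ref{thm2.3} lets me write $x=z+e_0$ (with $z\in\z(\l)_{\rm reg}$) as $(\Ad g)(t+u)$ where $t\in\z(\l')$ and $u\in\l'(\lambda',\ge 2)+(\n_+'\cap\g_t)$ with $[t,u]=0$; comparing Jordan decompositions gives $z=(\Ad g)(t)$, so $\l=\c_\g(z)$ is $G$-conjugate to $\c_\g(t)\supset\l'$. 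After replacing $(\l',e_0')$ by this conjugate I may assume $\l'\subset\l$ and $z\in\z(\l')$. Then $e_0$ lies in $(\Ad L)\cdot(\l'(\lambda',\ge 2)+(\n_+'\cap\l))=\overline{\mathcal{D}_\l(\l',e_0')}\cap\N(\l)$ by Theorem~\ref{thm2.3} applied inside $\l$, giving $\OO_L(e_0)\subset\overline{\mathrm{Ind}_{\l'}^\l\OO_{L'}(e_0')}$; both nilpotent orbits have dimension $\dim L-m$ (since stabiliser dimensions agree on $\mathcal{S}$), so they coincide. Rigidity of $e_0$ in $\l$ then forces $\l'=\l$, hence $\OO_L(e_0)=\OO_L(e_0')$ and $\mathcal{D}(\l,e_0)=\mathcal{D}(\l',e_0')$, as required.

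\textbf{Main obstacle.} The trickiest step is the bookkeeping in the converse direction that extracts the Levi containment $\l'\subset\l$ (up to $G$-conjugacy) from the closure description. This passes through the union~\eqref{e1} from the proof of Proposition~\ref{prop2.4} and requires a careful choice of conjugating element so that the transformed Jordan decomposition of $x$ both respects $\l'\subset\l$ and places $z$ inside $\z(\l')$; one may have to absorb an $N_G(T)$-element to line up $\z(\l')$ with the appropriate subspace of $\z(\l)$. Once this reduction is in place, the remainder of the argument is a dimension count combined with an internal application of Theorem~\ref{thm2.3} and Proposition~\ref{prop2.4} inside $\l$, together with the transitivity relation~\eqref{e3} already at hand.
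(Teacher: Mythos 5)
Your first direction tracks the paper's closely (modulo swapped letters): show $\mathcal{D}(\l,e_0)\subset\overline{\mathcal{D}(\l'',e_0')}$ by exhibiting a $G$-conjugate of $z+e_0$ inside $\r(\l'',e_0')$, then derive a contradiction from $\dim\mathcal{D}(\l'',e_0')>\dim\mathcal{D}(\l,e_0)=\dim\mathcal{S}$. The only thing to tidy is the final sentence: it is not $\mathcal{D}(\l,e_0)$ but the whole of $\mathcal{S}=\overline{\mathcal{S}}\cap\g_{(m)}=\overline{\mathcal{D}(\l,e_0)}\cap\g_{(m)}$ that lands inside $\overline{\mathcal{D}(\l'',e_0')}\cap\g_{(m)}$; since the latter is irreducible (being open in $\overline{\mathcal{D}(\l'',e_0')}$) of strictly larger dimension, $\mathcal{S}$ sits properly inside an irreducible subset of $\g_{(m)}$, contradicting that it is a component. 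This is exactly the contradiction the paper draws.

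For the converse you take a genuinely different route. The paper's proof fixes the rigid pair $(\l',e_0')$, builds the candidate set $\mathcal{S}'$ of orbits of maximal dimension in $\overline{\mathcal{D}(\l',e_0')}$, supposes $\mathcal{S}'\subsetneq\tilde{\mathcal{S}}$ for a genuine sheet with open class $\mathcal{D}(\l,e_0)$, and via the fibre description \eqref{e1} plus an $N_G(T)$-conjugation shows $\n_+\cap\g_t=0$, so $t\in\z(\l)_{\rm reg}$ and $\dim\z(\l)=\dim\z(\l')$, killing the strict inclusion by a dimension count. You instead fix the rigid $(\l,e_0)$, locate an ambient sheet $\mathcal{S}$ with open class $\mathcal{D}(\l',e_0')$, normalise so that $\l'\subset\l$ and $z\in\z(\l')$, use \eqref{e2} inside $\l$ to place $\OO_L(e_0)$ in $\overline{\mathrm{Ind}_{\l'}^{\l}\OO_{L'}(e_0')}$, match dimensions, and invoke rigidity of $e_0$ to force $\l'=\l$. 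Both arguments rest on the same three ingredients (Theorem~\ref{thm2.3}, the dimension formula of Proposition~\ref{prop2.4}, and the rigidity hypothesis), but the paper's contradiction comes from rigidity of $e_0'$ forcing $\n_+\cap\g_t=0$, whereas yours comes from rigidity of $e_0$ forcing $\l'=\l$. Your version is arguably more direct, at the cost of the normalisation step you flagged yourself: one must line up the Levi containment and the regular centre element simultaneously, and to invoke \eqref{e2} inside $\l$ one should first pass to standard Levis so that $\n_+'\cap\l$ really is a parabolic nilradical of $\l$. Also, the assertion that $\mathcal{D}(\l,e_0)$ "lies in a unique irreducible component" is not a priori true for an irreducible locally closed subset and is not needed: uniqueness of the sheet follows a posteriori because your argument shows that any component containing $\mathcal{D}(\l,e_0)$ has $\mathcal{D}(\l,e_0)$ as its open decomposition class, and a sheet is determined by its open class. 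These are presentational rather than substantive gaps; the skeleton is sound.
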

\begin{proof}
Let $\mathcal{D}(\l',e_0')$ be the open decomposition
class of $\mathcal{S}$ and suppose for a contradiction that $\OO_{L'}(e_0')={\rm Ind}_\l^{\l'}\,\OO_L(e_0)$ for some proper Levi subalgebra $\l$ of $\l'$ (here $L,L'$ are Levi subgroups of $G$ with $L\subset L'$ such that
$\l'=\Lie(L')$ and $\l=\Lie(L)$). Note that $\z(\l')\subset \z(\l)$ and $(\Ad\,L')\cdot (e_0+\z(\l)_{\rm reg})$ lies in $\mathcal{D}(\l,e_0)$. 
Let $\l'=\n'_-\oplus\l\oplus\n'_+$  be a triangular decomposition of $\l'$. Replacing $G$ and $\g$, respectively by $L'$ and $\l'$ in the proof of Theorem~\ref{thm2.3}
and applying (\ref{e0}) to any $z\in \z(\l)_{\rm reg}$  it is straightforward to see that $\mathcal{D}(\l,e_0)$ contains $\z(\l)_{\rm reg}+\OO_{L}(e_0)+\n'_+$ and hence $\z(\l)_{\rm reg}+e_0'$.  But then $\overline{\mathcal{D}(\l,e_0)}$ contains $\z(\l)+e_0'$. As $\z(\l')_{\rm reg}\subset \z(\l)$ it follows that  $\overline{\mathcal{S}}=\overline{\mathcal{D}(\l',e_0')}$ lies in $\overline{\mathcal{D}(\l,e_0)}$.
Combining Proposition~\ref{prop2.4} with (\ref{e3}) and our discussion at the beginning of this subsection we see that $\overline{\mathcal{D}(\l',e_0')}$ and $\overline{\mathcal{D}(\l,e_0)}$ share the same nilpotent orbit $\OO={\rm Ind}_\l^\g\,\OO_L(e_0)$.
 On the other hand,  Proposition~\ref{prop2.4} gives
\begin{eqnarray*}
\dim\mathcal{S}&=&\dim\mathcal{D}(\l',e_0')=
n-\dim\l'_{e_0'}+\dim\z(\l')=n-\dim\l_{e_0}+\dim\z(\l')\\
&<&n-\dim\l_{e_0}+\dim\z(\l)\,=\,\dim\mathcal{D}(\l,e_0)
\end{eqnarray*}
contrary to the fact that $\mathcal{S}$ is an irreducible component of $\g_{(m)}$ where $m=n-\dim\l_{e_0}$. This contradiction shows that $\l'=\l$ and $e_0'=e_0$ is rigid in $\l$.  

If $\OO$ is the nilpotent orbit of $\mathcal{S}$ 
then $\dim\OO=n-\l_{e_0}$ by Proposition~\ref{prop2.4}.
Then $\dim\g_x=\dim\l_{e_0}$ for all $x\in\OO$ because $\g_x=\Lie(G_x)$.

Finally, suppose $e_0'$ is a rigid nilpotent element in a Levi subalgebra $\l'$ of $\g$ and put $s=\dim\l'_{e'_0}$. Let $\mathcal{S}'$ be the union of all adjoint $G$-orbits of dimension $n-s$ contained in $\overline{\mathcal{D}(\l',e_0')}$. It follows from Chevalley's semi-continuity theorem that $\mathcal{S}'$ is a Zariski open subset of $\overline{\mathcal{D}(\l',e_0')}$ (see the end of the proof of Proposition~\ref{prop2.4} for a similar argument). Since $\overline{\mathcal{D}(\l',e_0')}$ is irreducible, so is the quasi-affine variety $\mathcal{S}'$. We claim that $\mathcal{S}'$ is a sheet of $\g$. Indeed, suppose the contrary. Then $\mathcal{S}'\subsetneq \tilde{\mathcal{S}}$ for some sheet $\tilde{\mathcal{S}}$ of $\g$ contained in $\g_{(n-s)}$. Let $\mathcal{D}(\l,e_0)$ be the open decomposition class of $\tilde{\mathcal{S}}$. We may assume without loss of generality that both $\l'$ and $\l$ are standard Levi subalgebras of $\g$, so that the maximal toral subalgebra $\t=\Lie(T)\subseteq \l\cap\l'$ of $\g$ contains both $\z(\l)$ and $\z(\l')$.

Let $z\in\z(\l')_{\rm reg}$. Since $\tilde{\mathcal{S}}$ contains $\mathcal{D}(\l',e_0')$ and the Zariski closure of $\tilde{\mathcal{S}}$ coincides with $\overline{\mathcal{D}(\l,e_0)}$ which, in turn, lies in $ \overline{\g_{(n-s)}}$, 
the $G$-orbit $\OO(z+e_0')$ is open in one of the fibres of
the morphism $\bar{F}\colon\,\overline{\mathcal{D}(\l,e_0)}\to \t/W$. In view of 
(\ref{e1}) it coincides with $(\Ad\,G)\cdot \big(t+V)$ for some $t\in (W\cdot z)\cap \z(\l)$ and some large subset $V$ of $\l(\lambda,\ge 2)+\n_+\cap\g_t$ which we are now going to describe. Since both $\l$ and $\n_+\cap \g_t$ are centralised by $t$, combining (\ref{e1}) with the inclusion $\overline{\mathcal{D}(\l,e_0)}\subseteq \overline{\g_{(n-s)}}$ one observes that $V$  consists of all elements in $\l(\lambda,\ge 2)+\n_+\cap\g_t$ whose centraliser in $\g_t$ has dimension $s$. Furthermore, $V$ is Zariski open in
$\l(\lambda,\ge 2)+\n_+\cap\g_t$

Write $z+e_0'=(\Ad\,g)\cdot(t+v)$ where $g\in G$ and $v\in V$ and choose $n_w\in N_G(T)$ such that $(\Ad\, n_w)\cdot z=t$. Since $[t,v]=0=[z,e_0']$ we have that
$(\Ad\,g)\cdot t=z$ and $(\Ad\, g)\cdot v=e_0'$. Then
$gn_w\in G_z$. Since $z\in\z(\l')_{\rm reg}$, and $G$ satisfies the standard hypotheses, applying \cite[3.19, 4.2]{SpSt} yields $G_z=L'$. 
Set $v':=(\Ad\,n_w^{-1})\cdot v$. Since $e_0'=(\Ad\, gn_w)\cdot v'$, we now deduce that
$v'\in \OO_{L'}(e_0')$. On the other hand, the concluding remark of the preceding paragraph in conjunction with (\ref{e1}) shows that $v$ lies in the $G_t$-orbit which intersects densely with $\l(\lambda,\ge 2)+\n_+
\cap\g_t\subset \g_t$. Since $L'=n_w^{-1} G_t n_w$, it follows that $\OO_{L'}(e_0')$ intersects densely with 
 $(\Ad\,n_w^{-1})\cdot\big(\l(\lambda,\ge 2)+\n_+\cap\g_t)\big)\subset \g_z=\l'$. 
 
Since the orbit $\OO_{L'}(e_0')$ is rigid in $\l'$ this yields
$\n_+\cap \g_t=0$ (otherwise $\OO_{L'}(e_0')$ would be induced from $(\Ad\,n_w^{-1})(\l)$, a proper Levi subalgebra of $\l'$). Since $t\in\z(\l)$ and $\g_t=(\n_{-}\cap\g_t)\oplus \l\oplus (\n_+\cap\g_t)$ we now obtain that $t\in\z(\l)_{\rm reg}$. Since $t\in W\cdot z$ and $G_z=L'$ it follows that 
$\dim\z(\l)=\dim\z(\l')$ and
$$\dim\mathcal{S}'=\dim\mathcal{D}(\l',e_0')=
n-s+\dim\z(\l')=n-s+\dim\z(\l)=\dim\mathcal{D}(\l,e_0)=\dim\tilde{\mathcal{S}}$$
by Proposition~\ref{prop2.4}. This contradiction shows that $\mathcal{S}'=\tilde{S}$ is a sheet of $\g$ thereby completing the proof.
\end{proof}
\begin{remark}\label{r6}
Theorem~\ref{thm2.5} implies that the sheets of $\g$ are in 1--1 correspondence with the $G$-conjugacy classes of pairs $(L,\OO_L)$ where $L$ is a Levi subgroup of $G$ and $\OO_L$ is a rigid nilpotent $L$-orbit in 
$\Lie(L)$. In the characteristic zero case, this is a well  known result of Borho; see \cite{Borho}. 
\end{remark}
\section{Classifying the rigid orbits in exceptional Lie algebras}
\subsection{Some historical remarks} 
In order to complete the picture outlined in Section~\ref{Sec2} one needs to classify all rigid orbits in the reductive Lie algebras $\g=\Lie(G)$. Of course, the general case reduces quickly to the case where the group $G$ is simple, and here the problem has a long history.
All rigid nilpotent orbits in Lie algebras of classical groups are described by Kraft \cite{Kraft} in type ${\rm A}$ and by Kempken \cite{Ke} and Spaltenstein \cite{Sp-book} in types ${\rm B,C,D}$. 
If $\g=\sl_n$ then $\{0\}$ is the only rigid orbit in $\g$, whilst for $\g=\so_n$ or $\sp_{n}$  
the classification is given in terms of partitions of $n$. 
More precisely, it is proved in \cite{Kraft} that a nilpotent element in $\sl_n$ corresponding to a partition $\lambda$ of $n$ is Richardson
in a parabolic subalgebra of $\sl_n$
associated with the dual partition $\lambda^t$.
The arguments in \cite{Ke} rely on Kraft's result
in a crucial way.

Although the main theorems in {\it loc.\,cit.} are stated under the assumption that the base field has characteristic $0$, the proofs are based on linear algebra and the same combinatorial description is valid in any good characteristic, i.e. for $p\ne 2$.
In characteristic $2$, nilpotent orbits and sheets in $\so_n$ and $\sp_n$ were studied by Hesselink \cite{He} and by Spaltenstein \cite{Sp1} who 
conjectured that every sheet in a reductive Lie algebra over an algebraically closed field of arbitrary characteristic should contain a {\it unique} nilpotent orbit. Of course, confirming this would imply that
the number of nilpotent orbits in $\g$ is finite.
The main results of \cite{Sp1}
show that the conjecture holds for $G$ classical, but the general case remains open in bad characteristic.
We mention that Spaltenstein's conjecture makes sense in the context of infinitesimal sheets 
and their coadjoint analogues (see Remark~\ref{r1}) and confirming it would enable one to
prove a natural analogue of the
Kac--Weisfeiler conjecture in arbitrary characteristic; see \cite[5.6]{PSk} for detail.

For $G$ exceptional, the rigid orbits in $\N(\g)$ were first 
classified by Elashvili \cite{Elashvili} under the assumption that ${\rm char}(k)=0$. Furthermore, for any orbit $\OO\subset\N(\g)$, Elashvili lists
all sheets of $\g$ containing $\OO$. This important result is presented in {\it loc.\,cit.} in the form of tables which were recently double-checked 
in \cite{dG-E} by using computational methods. 
\subsection{Odd nilpotent elements} It was first observed by Lusztig--Spaltenstein that if ${\rm char}(k)$ is $0$ or large for $G$ then many nilpotent orbits in $\g$ can be induced in a very natural way; see \cite[Prop.~1.9]{LSp}. 
The goal of this subsection it to show that this result is still valid under the assumption that $G$ satisfies the standard hypotheses.

Let $\Phi$ be the root system of $G$ with respect to a maximal torus $T$ of $G$ and let $\Pi$ be a basis of simple roots in $\Phi$. 
Each orbit $\OO\subset \N(\g)$ is uniquely labelled by its weighted Dynkin diagram $D$ which assigns to each $\alpha\in\Pi$
an integer $D(\alpha)\in\{0,1,2\}$. According to \cite[2.3, 2.4]{P03}, each such $D$ gives rise to a cocharacter $\lambda=\lambda_D\in X_*\big(T\cap\mathcal {D}G\big)$ which is optimal in the sense of the Kempf--Rousseau theory for every element of a principal Zariski open subset of $\g(\lambda,2)$. That subset, denoted earlier by $\g(\lambda,2)_\varphi$, coincides with the intersection of $\OO$ with $\g(\lambda,2)$. 
A nilpotent orbit $\OO=\OO(D)$ is said to be {\it odd} if $D(\alpha)\in\{0,1\}$ for all $\alpha\in\Pi$.
 
Given a weighted Dynkin diagram $D$ we let $\Pi'=\Pi'(D)$
be the set of all $\alpha\in\Pi$ such that $D(\alpha)\in\{0,1\}$ and denote by $L=L(D)$ the
standard Levi subgroup of $G$ generated by $T$ and all unipotent root subgroups $U_\alpha$ with $\alpha\in\Pi'$. 
Given a nonempty subset $\Pi_0\subseteq\Pi$ and a weighted Dynkin diagram $D_0$ of the standard Levi subalgebra of $\g$ associated with $\Pi_0$ we define $\widetilde{D}_0\colon\,\Pi\to \{0,1,2\}$ by setting $\widetilde{D}_0(\alpha)=D_0(\alpha)$
for all $\alpha\in \Pi_0$ and $\widetilde{D}_0(\alpha)=2$ for all $\alpha\in\Pi\setminus\Pi_0$.
\begin{prop}\label{prop3.2} The following are true:

(i) If the orbit $\OO=\OO(D)$ is not odd then $\l=\Lie(L(D))$ is a proper Levi subalgebra of $\g$ and $\OO(D)\,=\,{\rm Ind}_{\l}^\g
\,\OO_{L}(e_0)$ for some nilpotent element  $e_0\in\l$.

(ii) Let $L'$ be the standard Levi
subgroup of $G$ associated with a nonempty subset $\Pi_0$ of $\Pi$ and let
 $D_0\colon\,\Pi_0\to\{0,1,2\}$ be a weighted Dynkin diagram of $\l'=\Lie(L')$.
If $\widetilde{D}_0$ is a valid weighted Dynkin diagram for $\g$ then 
$\OO(\widetilde{D}_0)={\rm Ind}_{\l'}^\g\,\OO_{L'}(D_0)$.
\end{prop}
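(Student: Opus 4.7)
The plan is to deduce both parts from the closure description in Theorem~\ref{thm2.3} and the dimension formula in Proposition~\ref{prop2.4}, by comparing the grading coming from the associated cocharacter $\tilde\lambda$ of the $G$-orbit with the grading coming from the associated cocharacter $\mu$ of a nilpotent orbit in $\l'$. The common technical ingredient is an elementary weight calculation on roots: writing $\alpha=\sum_{\beta\in\Pi_0}c_\beta\beta+\sum_{\gamma\in\Pi\setminus\Pi_0}d_\gamma\gamma$ with coefficients of a single sign and taking $\tilde\lambda=\lambda_{\widetilde{D}_0}$ (respectively $\tilde\lambda=\lambda_D$ with $\Pi_0=\Pi'(D)$, $D_0=D|_{\Pi_0}$ in part~(i)), the $\tilde\lambda$-weight of $\alpha$ equals $\sum c_\beta D_0(\beta)+2\sum d_\gamma$. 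The two consequences I would record are: every root of $\tilde\lambda$-weight in $\{-1,0,1\}$ belongs to $\Phi(L')$ and has the same weight under $\mu$; every positive root outside $\Phi(L')$ has $\tilde\lambda$-weight $\geq 2$.

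For part~(ii), I would take $e_0\in\OO_{L'}(D_0)\cap\l'(\mu,2)$ and any $\tilde e\in\OO(\widetilde{D}_0)\cap\g(\tilde\lambda,2)$. The weight calculation forces $\tilde e\in\l'(\mu,\geq 2)+\n'_+\subseteq\r(\l',e_0)\subseteq\overline{\mathcal{D}(\l',e_0)}$ via Theorem~\ref{thm2.3}, and identifies $\g(\tilde\lambda,i)$ with $\l'(\mu,i)$ for $i\in\{0,1\}$. Combining this with the standard formula $\dim\g_{\tilde e}=\dim\g(\tilde\lambda,0)+\dim\g(\tilde\lambda,1)$ and its $\l'$-analogue for $e_0$ gives $\dim\OO(\widetilde{D}_0)=n-\dim\l'_{e_0}$; by Proposition~\ref{prop2.4}, this matches the dimension of the unique open $G$-orbit in $\overline{\mathcal{D}(\l',e_0)}\cap\N(\g)$, namely $\Ind_{\l'}^\g\OO_{L'}(e_0)$.

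For part~(i), my plan is to pick $e\in\OO(D)\cap\g(\lambda_D,2)$ and decompose $e=e_0+e_1$ with $e_0\in\l\cap\g(\lambda_D,2)$ and $e_1\in\n_+\cap\g(\lambda_D,2)$; the decomposition is meaningful because only positive roots contribute, making $e_0$ nilpotent in $\l=\Lie(L(D))$. Fixing an optimal cocharacter $\lambda_0$ for $e_0$ in $\l$, the inclusion $e_0+\n_+\subseteq\r(\l,e_0)\subseteq\overline{\mathcal{D}(\l,e_0)}$ together with Proposition~\ref{prop2.4} delivers $\OO(D)\subseteq\overline{\mathcal{D}(\l,e_0)}\cap\N(\g)$, hence $\dim\g_e\geq\dim\l_{e_0}$. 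The main obstacle is the reverse inequality; for this, the plan is to invoke the elementary bound $\dim\l_x\geq\dim\l(\mu,0)+\dim\l(\mu,1)$ for any nilpotent $x\in\l(\mu,2)$, which one obtains by observing that the restriction $\ad(x)\colon\bigoplus_{i\geq 0}\l(\mu,i)\to\bigoplus_{i\geq 2}\l(\mu,i)$ has kernel of dimension at least $\dim\l(\mu,0)+\dim\l(\mu,1)$ and this kernel sits inside $\l_x$. Combined with $\dim\g_e=\dim\l(\mu,0)+\dim\l(\mu,1)$ from the first paragraph, this forces equality, whence $\OO(D)$ is the unique open $G$-orbit in $\overline{\mathcal{D}(\l,e_0)}\cap\N(\g)$, namely $\Ind_\l^\g\OO_L(e_0)$ by Proposition~\ref{prop2.4}. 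A pleasant feature of this route is that one never needs to verify that $D|_{\Pi'(D)}$ is the weighted Dynkin diagram of $e_0$ in $\l$; the dimension estimate alone is enough to pin down the induced orbit.
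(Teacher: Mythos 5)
Your proof is correct, and for part~(i) it takes a genuinely different technical route from the paper's. Both proofs begin the same way: take $e\in\OO(D)\cap\g(\lambda_D,2)$, split $e=e_0+e_1$ with $e_0\in\l(\lambda_D,2)$ and $e_1\in\n_+$, observe via the root-coefficient calculation that $\g(\lambda_D,i)=\l(\lambda_D,i)$ for $i=0,1$, and hence $\dim\g_e=\dim\l(\lambda_D,0)+\dim\l(\lambda_D,1)$. The key step is then to show $\dim\l_{e_0}$ equals this number. The paper proves it directly by exploiting the nondegeneracy of $\kappa|_{\l}$: first it shows $\l_{e_0}\subseteq\l(\lambda_D,\ge 0)$ using $[\l(\lambda_D,-i),\m(\lambda_D,2)]=0$ and $\g_e\subset\g(\lambda_D,\ge 0)$, then uses the duality between $\l(\lambda_D,i)$ and $\l(\lambda_D,-i)$ under $\kappa$ to deduce surjectivity of $\ad e_0\colon\l(\lambda_D,i)\to\l(\lambda_D,i+2)$ for $i\ge 0$. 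Your argument sidesteps the bilinear form entirely: you get the upper bound $\dim\l_{e_0}\le\dim\g_e$ from $e\in\r(\l,e_0)\subseteq\overline{\mathcal{D}(\l,e_0)}$ (Theorem~\ref{thm2.3}) combined with Proposition~\ref{prop2.4}, and the matching lower bound from the elementary rank--nullity estimate on $\ad e_0\colon\l(\lambda_D,\ge 0)\to\l(\lambda_D,\ge 2)$. This is more in the spirit of the closure machinery developed in Section~\ref{Sec2} and is arguably the cleaner reorganisation; it also makes explicit the observation the paper leaves implicit (that the conclusion needs only a dimension count, not identification of $D|_{\Pi'(D)}$ as a weighted diagram for $e_0$). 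For part~(ii) the two arguments are closer in spirit: the paper constructs an explicit $e'+v\in\g(\lambda,2)_\varphi$ projecting to $\OO_{L'}(D_0)$ via the open projection $\mathrm{pr}_1$ and then appeals to the definition of induction, while you run the same closure-plus-dimension argument as in part~(i); both rest on the identical weight calculation identifying $\g(\tilde\lambda,i)$ with $\l'(\lambda',i)$ for $i\in\{0,1\}$.
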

\begin{proof} It is clear that if $D(\alpha)=2$ for some $\alpha\in\Pi$ then $\l$ is a proper Levi subalgebra of $\g$. Set $\Pi(j):=\{\alpha\in\Pi\,|\,\,D(\alpha)=j\}$
where $j\in\{0,1,2\}$.
If $\Pi=\{\alpha_1,\ldots,\alpha_\ell\}$
and $\beta=\sum_{i=1}^\ell m_i\alpha_i$, where $m_i\in \Z$, then we put $\nu_i(\beta):=m_i$ for all $1\le i\le \ell$.
From the definition of $\lambda=\lambda_D$ it follows that 
$\g(\lambda,i)=\l(\lambda,i)$ for $i=0,1$ and
$\g(\lambda,\pm 2)=\l(\lambda,\pm 2)\oplus \m(\lambda,\pm 2)$ where $\m(\lambda,\pm 2)$ is the 
$k$-span of all root vectors $e_\beta\in \Lie(U_\beta)$ with $\nu_i(\beta)=1$ for $i\in \Pi(2)$ and $\nu(\beta)=0$ for $i\in \Pi(1)$. 

Let $e\in\g(2,\lambda)_\varphi$ and write $e=e_0+e_1$ with $e_0\in\l(\lambda,2)$ and $e_1\in \m(\lambda,2)$.
Since 
$\g_e\subset \g(\lambda,\ge 0)$ and $[e,\g(\lambda,\ge 0)]=\g(\lambda,\ge 2)$ by \cite[Theorem~2.3]{P03}, we have that $$\dim\g_e=\dim\g(\lambda,0)+\dim\g(\lambda,1)=\dim\l(\lambda,0)+
\dim\l(\lambda,1).$$
Therefore, in order to prove the first part of the proposition it suffices to show that $\dim\l_{e_0}=\dim\l(\lambda,0)+\dim\l(\lambda,1)$.
Indeed, in that case the orbit ${\rm Ind}_\l^\g\,\OO_L(e_0)$ would intersect densely with $\g(\lambda,\ge 2)$ and hence coincide with $\OO(D)$.

 Since $\Phi=\Phi_+(\Pi)\sqcup \Phi_-(\Pi)$ it is straightforward to see that
$\big[\l(\lambda, -i),\m(\lambda,2)\big]=0$ for all $i\in\Z_{>0}.$ 
It follows that the restrictions of $\ad\,e$ and $\ad\,e_0$ to $\l(\lambda,-i)$ coincide for all such $i$. Since $\g_e\subset \g(\lambda,\ge 0)$ by \cite[Theorem~A]{P03} this entails $\l_{e_0}\subseteq \l(\lambda,\ge 0)$.

It is well known (and easily seen) that the restriction of $\kappa$ to $\l$ is non-degenerate
and the subspaces $\l(\lambda,\pm i)$ are dual to each other with respect to $\kappa$. 
Since $\l_{e_0}$ coincides with the orthogonal complement of $[e_0,\l]$ with respect to the restriction of
$\kappa$ to $\l$ and $\l_{e_0}\subseteq \l(\lambda,\ge 0)$ by the above, 
the map $\ad\,e_0\colon\,\l(\lambda,i)\to \l(\lambda,i+2)$ must be surjective for all $i\in\Z_{\ge 0}$.
From this it is immediate that $\dim\l_{e_0}=\dim\l(\lambda,0)+\dim\l(\lambda,1)$  proving (i).

To prove (ii), we let $\lambda'$ and $\lambda$ be the cocharacters in $X_*(T)$ attached to $D_0$ and $\widetilde{D}_0$ at the beginning of this subsection. It is immediate from the definition of $\widetilde{D}_0$ that
$\l'(\lambda',2)\subseteq \g(\lambda,2)$ and
$\g(\lambda,i)\cap\l'=\l'(\lambda',i)$ for $i=1,2$.  
Let $V$ be the $T$-stable subspace of $\g(\lambda,2)$ complementary to $\l'(\lambda',2)$.
Being an open map the first
projection ${\rm pr_1}\colon\,\g(\lambda,2)=\l'(\lambda',2)\oplus V\twoheadrightarrow \l'(\lambda',2)$ takes the Zariski open subset $\g(\lambda,2)_\varphi$ of $\g(\lambda,2)$ onto a dense subset of $\l'(\lambda',2)$. 
In view of \cite[Theorem~2.3]{P03} this implies that there exists $e'\in\OO_{L'}(D_0)\cap \l'(\lambda',2)$ such that $e'+v\in \g(\lambda,2)_\varphi$ for some 
$v\in V$. As $\widetilde{D}_0$ is a weighted Dynkin diagram for $\g$, {\it loc.\,cit.} also shows that the cocharacter $\lambda$ is optimal for $e:=e'+v$ and 
$$
\dim \g_e=\dim\g(\lambda,0)+\dim\g(\lambda,1)=
\dim\l'(\lambda',0)+\dim\l'(\lambda',1)=
\dim{\l'}_{e'}.
$$ Since it is straightforward to see that there exists a standard parabolic subgroup $P=L'R_u(P)$ of $G$ such that
$V\subseteq \Lie(R_u(P))$, we now conclude that
$e=e'+v\in{\rm Ind}_{\l'}^\g\,\OO_{L'}(D_0)$. This  finishes the proof.
\end{proof}
\begin{remark}
For $G$ exceptional, the second part of Proposition~\ref{prop3.2} is an immediate consequence of 
\cite[Theorem~3(i)]{LT11} which was proved by computational methods.
\end{remark}
\subsection{A sufficient condition for rigidity}
Given a nilpotent element $e\in\g$ we denote by
$\c_e$ the factor space $\g_e/[\g_e,\g_e]$ and
set $c(\g_e):=\dim\c_e$. Our next result is a  generalisation of \cite[Proposition~11]{Ya}.
\begin{prop}\label{deform}
Assume, as before, that $G$ satisfies the standard hypotheses and 
let $\l=\Lie(L)$ be a Levi subalgebra of $\g$. If $e\in{\rm Ind}_\l^\g\,\OO_L(e_0)$
for some $e_0\in\N(\l)$
then 
$c(\g_e)\ge c(\l_{e_0})$ and $\dim\z(\g_e)\ge \dim\z(\l_{e_0})$.
Furthermore, if $L$ has no simple components isomorphic to $\SL_{rp}$, where $p={\rm char}(k)$, then $c(\g_e)\ge c\big([\l,\l]_{e_0}\big)+\dim\z(\l)$ and $\dim \z(\g_e)\ge \dim\z(\l)+\dim \z\big([\l,\l]_{e_0}\big).$ 
\end{prop}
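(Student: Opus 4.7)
The plan is to prove both inequalities simultaneously by a deformation argument, realising $\g_e$ as a flat limit of a family of centralisers in $\g$ that, away from the special fibre, are all $G$-conjugate to $\l_{e_0}$, and then applying the standard semicontinuity of the commutator subalgebra and the centre in algebraic families of Lie subalgebras of $\g$.

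First, note that $e$ lies in $\overline{\mathcal{D}(\l,e_0)}$, as is immediate from the characterisation of $\Ind_\l^\g\,\OO_L(e_0)$ as the unique open $G$-orbit in $\overline{\mathcal{D}(\l,e_0)}\cap\N(\g)$ recorded just after Theorem~\ref{thm2.5}. Moreover, Theorem~\ref{thm2.5} guarantees that every $x$ in the sheet $\mathcal{S}\supseteq\OO$ with open decomposition class $\mathcal{D}(\l,e_0)$ satisfies $\dim\g_x=\dim\l_{e_0}$. Since $\mathcal{D}(\l,e_0)$ is Zariski dense in the irreducible variety $\overline{\mathcal{D}(\l,e_0)}$ by Remark~\ref{r5}, a standard argument produces a smooth affine curve $C$, a closed point $t_0\in C$, and a morphism $\gamma\colon C\to\mathcal{S}$ with $\gamma(t_0)=e$ and $\gamma(t)\in\mathcal{D}(\l,e_0)$ for every $t\ne t_0$.

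Setting $\h_t:=\g_{\gamma(t)}=\ker(\ad\gamma(t))$, the constancy of $\dim\h_t=\dim\l_{e_0}=:d$ over $C$ means the subspaces $\h_t$ fit into a rank-$d$ sub-bundle $\H\subseteq\g\times C$, or equivalently define a morphism $C\to\mathrm{Gr}_d(\g)$. For $t\ne t_0$ one has $\gamma(t)\in(\Ad\,G)\cdot(e_0+\z(\l)_{\rm reg})$, so $\h_t$ is $G$-conjugate, and in particular isomorphic as a Lie algebra, to $\l_{e_0}$, whereas $\h_{t_0}=\g_e$. The bracket of $\g$ restricts to a morphism of vector bundles $\H\otimes\H\to\g\times C$ whose fibrewise rank equals $\dim[\h_t,\h_t]$, while the adjoint action gives a morphism $\H\to\End(\H)$ whose fibrewise kernel dimension equals $\dim\z(\h_t)$. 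The standard lower-semicontinuity of the rank of a morphism of vector bundles on the base (and the corresponding upper-semicontinuity of the kernel dimension), combined with the constancy of these invariants on $C\setminus\{t_0\}$, forces
\[
\dim[\g_e,\g_e]\le\dim[\l_{e_0},\l_{e_0}]\quad\text{and}\quad\dim\z(\g_e)\ge\dim\z(\l_{e_0}).
\]
The first inequality, together with $\dim\g_e=\dim\l_{e_0}$, yields $c(\g_e)\ge c(\l_{e_0})$.

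To obtain the strengthened statements when $L$ has no simple component isomorphic to $\SL_{rp}$, I would invoke the direct-sum decomposition $\l=[\l,\l]\oplus\z(\l)$ established in \S\ref{SH}. Since $\z(\l)$ is toral, a nilpotent element of $\l$ has no $\z(\l)$-component, so $e_0\in\N(\l)\subset[\l,\l]$ and the decomposition descends to $\l_{e_0}=[\l,\l]_{e_0}\oplus\z(\l)$. As $\z(\l)$ is a central abelian ideal of $\l_{e_0}$, this gives $[\l_{e_0},\l_{e_0}]=\bigl[[\l,\l]_{e_0},[\l,\l]_{e_0}\bigr]$ and $\z(\l_{e_0})=\z\bigl([\l,\l]_{e_0}\bigr)\oplus\z(\l)$, hence $c(\l_{e_0})=c\bigl([\l,\l]_{e_0}\bigr)+\dim\z(\l)$ and $\dim\z(\l_{e_0})=\dim\z\bigl([\l,\l]_{e_0}\bigr)+\dim\z(\l)$. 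Substituting into the bounds from the previous paragraph yields the required inequalities. The main obstacle is the setup in the second paragraph: ensuring $\gamma(C)\subseteq\mathcal{S}$ so that the family of centralisers is genuinely flat requires both Theorem~\ref{thm2.5} (constancy of $\dim\g_x$ on $\mathcal{S}$) and the irreducibility of $\overline{\mathcal{D}(\l,e_0)}$, after which the semicontinuity invocations are essentially formal.
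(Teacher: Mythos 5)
Your argument is correct and is essentially the same deformation-plus-semicontinuity strategy as the paper's: realise $\g_e$ as a flat specialisation of centralisers $G$-conjugate to $\l_{e_0}$ and invoke the appropriate (semi)continuity of $\dim[\h,\h]$ and $\dim\z(\h)$. The paper implements this very concretely with the explicit one-parameter family $t\mapsto th+e$ (with $h\in\z(\l)_{\rm reg}$), working with free $k[t]$-modules $M_i\subset\g[t]$ and the augmentation $\g[t]\to\g$, which avoids having to invoke the abstract existence of a curve through $e$ into $\mathcal D(\l,e_0)$; you instead phrase it in terms of a sub-bundle $\H\subset\g\times C$ over an auxiliary smooth curve and rank semicontinuity, which is logically equivalent but less self-contained. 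Your reduction in the last paragraph via $\l_{e_0}=[\l,\l]_{e_0}\oplus\z(\l)$, $[\l_{e_0},\l_{e_0}]=\bigl[[\l,\l]_{e_0},[\l,\l]_{e_0}\bigr]$ and $\z(\l_{e_0})=\z\bigl([\l,\l]_{e_0}\bigr)\oplus\z(\l)$ is precisely the intended computation from the paragraph in \S\ref{SH} giving $\l=[\l,\l]\oplus\z(\l)$.
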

\begin{proof}
Put $\g[t]:=\g\otimes_k k[t]$ and $\g(t):=\g\otimes_k k(t)$, where $t$ is an indeterminate, and let 
$\pi\colon\,\g[t]\to \g$ denote the canonical projection induced by the augmentation map $k[t]\to k$. 
Let $P=L\cdot R_u(P)$ be a parabolic subgroup of $G$ containing $L$ and $\n_+:=\Lie(R_u(P))$. By our assumption, $\dim\g_e=\dim\l_{e_0}$ and we may assume without loss of generality that $e=e_0+e_1$ for some $e_1\in\n_+$.

Let $h\in \z(\l)_{\rm reg}$ and consider
$th+e$, an element of $\g[t]$. Let $G(t)$ be the group
of the $k(t)$-rational points of $G$.
The argument used in the proof of  Theorem~\ref{thm2.3} then shows that $th+e$ is 
$G(t)$-conjugate to $th+e_0$ (in fact, a conjugating element can be found already in the group of $k(t)$-rational points of $R_u(P)$). 
From this it is immediate that $\g(t)_{th+e}\cong \l_{e_0}\otimes_k k(t)$ as Lie algebras over $k(t)$.
It follows that
\begin{eqnarray*}
\dim_{k(t)}\big[\g(t)_{th+e},\g(t)_{th+e}\big]&=&\dim[\l_{e_0},\l_{e_0}],\\
\dim_{k(t)}\,\z\big(\g(t)_{th+e}\big)&=&\dim\z(\l_{e_0}).
\end{eqnarray*}
Furthermore, if $\l$ has no components of type ${\rm A}_{rp-1}$,  our discussion in \S~\ref{SH} implies that $\dim[\l_{e_0},\l_{e_0}]=\dim [\l,\l]_{e_0}$ and 
$\dim\z(\l_{e_0})=\dim\z(\l)+\dim
\z\big([\l,\l]_{e_0}\big)$.

Put $M_1:=\g(t)_{th+e}\cap\g[t]$, $M_2:=\big[\g(t)_{th+e},\g(t)_{th+e}\big]\cap\g[t]$ and $M_3:=\z\big(\g(t)_{th+e}\big)\cap\g[t]$. Each of these is a $k[t]$-submodule of the free $k[t]$-module $\g[t]$, and it is not hard to check that all factor modules $\g[t]/M_i$ are torsion-free.
Since $k[t]$ is a principal ideal domain and the $k[t]$-module $\g[t]$ is finitely generated, it follows that all $\g[t]/M_i$ are free over $k[t]$. As a consequence, there exist free $k[t]$-submodules $N_i$ of $\g[t]$ such that  $\g[t]=N_i\oplus M_i$ as $k[t]$-modules, where $i=1,2,3$. This, in turn, 
shows that each $M_i$ is free over $k[t]$ and
$\rk_{k[t]}M_i=\dim_{k(t)} M_i\otimes_{k[t]}\,k(t)$. 

It is straightforward to see that $\pi\colon\,\g[t]\to \g$ maps $M_1$ into $\g_e$.
Since $e\in {\rm Ind}_{\l}^\g\,\OO_L(e_0)$ we have that $$\dim\g_e=\dim \l_{e_0}=\dim_{k(t)}\g(t)_{th+e}=\dim_{k(t)} M_1\otimes_{k[t]} k(t)=\rk_{k[t]} M_1.$$
From this it follows that $M_1$ has a free basis $v_1(t),\ldots,v_r(t)$ such that $\pi(v_1(t)),\ldots,\pi(v_r(t))$ form a $k$-basis of $\g_e$. For $1\le i\le r$ put $v_i:=\pi(v_i(t))$ and observe that
$$\rk_{k[t]}M_2=\dim M_2\otimes_{k[t]}k(t)=\dim_{k(t)}
\big[\g(t)_{th+e},\g(t)_{th+e}\big]=\dim[\l_{e_0},\l_{e_0}].$$ Since
$[v_i(t),v_j(t)]\in M_2$ and $\pi\big([v_i(t),v_j(t)]\big)=[v_i,v_j]$ for $1\le i,j\le r$, this gives 
$\dim [\g_e,\g_e]\le \dim [\l_{e_0},\l_{e_0}]$. Therefore, $c(\g_e)=\dim\g_e-\dim[\g_e,\g_e]\ge \dim \l_{e_0}-\dim[\l_{e_0},\l_{e_0}]=c(\l_{e_0}).$

Finally, we observe that $\pi(M_3)\subseteq \z(\g_e)$
and $$\rk_{k[t]} M_3=\dim_{k(t)}M_3\otimes_{k[t]}k(t)=\dim _{k(t)}\z\big(\g(t)_{th+e}\big)
=\dim\z(\l_{e_0}).$$
Consequently, $\dim\z(\g_e)\ge \dim\z(\l_{e_0})$. The second part of the proposition now follows from our remarks earlier in the proof.
\end{proof}
Proposition~\ref{deform} gives us a useful sufficient condition for rigidity of nilpotent elements in exceptional Lie algebras.
\begin{cor}\label{almostrigid} Suppose $G$ is simple and $p$ is good for $G$. 
If $e\in\N(\g)$ is such that $\g_e=[\g_e,\g_e]$ then $e$ is rigid in $\g$.
\end{cor}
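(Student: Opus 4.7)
The plan is to prove the contrapositive: assume $e\in\N(\g)$ is not rigid and show $c(\g_e)\geq 1$, contradicting $\g_e=[\g_e,\g_e]$. Non-rigidity gives $\OO(e)=\Ind_\l^\g\OO_L(e_0)$ for some proper Levi $\l=\Lie(L)\subsetneq\g$ and nilpotent $e_0\in\l$. Setting $\t_0:=\Lie(Z(L)^\circ)$, the simplicity of $G$ together with $L\subsetneq G$ gives $\dim\t_0=\rk G-\rk[L,L]\geq 1$, and Proposition~\ref{deform} will be the only tool used.

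In the generic situation, when $L$ has no simple component isomorphic to $\SL_{rp}$, the second (stronger) inequality of Proposition~\ref{deform} applies verbatim and yields
\[
c(\g_e)\;\geq\;c\bigl([\l,\l]_{e_0}\bigr)+\dim\z(\l)\;\geq\;\dim\z(\l)=\dim\t_0\;\geq\;1,
\]
producing the contradiction immediately.

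If $L$ does contain an $\SL_{rp}$ simple component, only the weaker bound $c(\g_e)\geq c(\l_{e_0})$ from Proposition~\ref{deform} remains, and I would establish $c(\l_{e_0})\geq 1$ by hand. Decompose $\l=\l_1\oplus\cdots\oplus\l_k\oplus\t_0$ as simple Lie ideals plus a toral centre complementary to $[\l,\l]$. Each simple component is perfect in good characteristic---even $\sl_{rp}$, whose perfection in characteristic $p$ follows from the identity $I_{rp}=\sum_{i=2}^{rp}(e_{ii}-e_{11})\in[\sl_{rp},\sl_{rp}]$---so $[\l,\l]=\bigoplus_i\l_i$ and $\t_0\cap[\l,\l]=0$. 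Since $\t_0$ centralises $\l$ one has $\t_0\subseteq\l_{e_0}$, and because $[\l_{e_0},\l_{e_0}]\subseteq[\l,\l]$ the composition $\t_0\hookrightarrow\l_{e_0}\twoheadrightarrow\l_{e_0}/[\l_{e_0},\l_{e_0}]$ is injective, forcing $c(\l_{e_0})\geq\dim\t_0\geq 1$ and the desired contradiction $c(\g_e)\geq 1$.

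The main technical obstacle is justifying the ideal decomposition $\l=\bigoplus_i\l_i\oplus\t_0$ in the $\SL_{rp}$ case: in characteristic $p$ the central isogeny $L^{ss}\times Z(L)^\circ\to L$ may have non-\'etale kernel containing $\mu_p$, so the naive candidate $\Lie(Z(L)^\circ)$ for $\t_0$ can lie entirely inside $[\l,\l]$---as happens for $\l=\gl_p$ realised as a Levi of $\sl_{p+1}$, where $\Lie(Z(L)^\circ)=kI_p\subset\sl_p=[\l,\l]$. Nevertheless, the rank identity $c(\l)=\dim\bigl(\l/[\l,\l]\bigr)=\rk G-\rk[L,L]$ continues to hold, and a suitable complement $\t_0$ to $[\l,\l]$ in $\l$ can always be chosen so that the injectivity argument goes through.
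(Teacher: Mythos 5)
Your generic case (no $\SL_{rp}$ component of $L$) is exactly the paper's argument: the stronger inequality in Proposition~\ref{deform} gives $c(\g_e)\geq\dim\z(\l)\geq 1$. The gap is in your treatment of the $\SL_{rp}$ case. The injectivity argument needs a complement $\t_0$ to $[\l,\l]$ in $\l$ that is \emph{contained in} $\l_{e_0}$; your justification for $\t_0\subseteq\l_{e_0}$ is that $\t_0$ centralises $\l$, i.e.\ $\t_0\subseteq\z(\l)$ --- which you then observe can fail, since $\z(\l)$ may lie inside $[\l,\l]$. The closing assertion that ``a suitable complement $\t_0$ to $[\l,\l]$ in $\l$ can always be chosen so that the injectivity argument goes through'' is not just unproven, it is false in the relevant cases. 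Indeed, the existence of such a $\t_0$ is equivalent to $\l_{e_0}\not\subseteq[\l,\l]$, and the paper's Cartan-matrix computation (with $\t_0=\t\cap[\l,\l]$ and $\t_1=\bigcap_{\alpha\in\Pi_1}\ker({\rm d}\alpha)_e$) shows precisely that this inclusion \emph{can} fail to be strict once the support $\Pi_1$ of $e_0$ contains a component of type ${\rm A}_{p-1}$. Your own example illustrates the danger: for $e_0$ regular nilpotent in $\sl_p\subset\l\cong\gl_p$, the centraliser $\l_{e_0}=k[e_0]$ lies entirely inside $[\l,\l]=\sl_p$, so no toral complement sits inside $\l_{e_0}$.

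The paper closes this case by a different mechanism that you are missing. After reducing (by the dimension count you are implicitly using) to the situation where $\Pi_1$ itself contains a component of type ${\rm A}_{p-1}$, it considers the homomorphism $\psi\colon\l\to\Der(\l_1)\cong\mathfrak{pgl}_p$ given by the adjoint action on the ideal $\l_1\cong\sl_p$. The image $\psi(e_0)$ is a regular nilpotent of $\mathfrak{pgl}_p$, whose centraliser is abelian, so $\psi(\l_{e_0})$ is a nonzero abelian quotient of $\l_{e_0}$ and $c(\l_{e_0})\geq 1$. That is the key idea you would need to supply. Separately, Proposition~\ref{deform} requires the standard hypotheses, which fail for $G$ classical of type ${\rm A}_{rp-1}$; the paper disposes of the classical groups at the outset by citing \cite{PT14} rather than via Proposition~\ref{deform}, and your proposal does not address that reduction.
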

\begin{proof}
We may assume without loss that $G$ is simply connected. 
For $G$ classical the statement follows by comparing \cite[Theorem~3]{PT14} with the Kempken--Spaltenstein description of rigid nilpotent nilpotent orbits in $\g$. (Of course,
in the characteristic zero case one can apply 
\cite[Proposition~11]{Ya} directly.)

So suppose from now that $G$ is exceptional and $p$ is a good prime for $G$. Since the Killing form of $\g$ is non-degenerate, $G$ satisfies the standard hypotheses.  By rank considerations, there are no Levi subgroups in $G$ with components of type ${\rm A}_{rp-1}$ for $r\ge 2$. 

Suppose
$e\in{\rm Ind}_\l^\g\,\OO_L(e_0)$ where $L$ is a proper Levi subgroup of $G$ and $e_0\in\N(\l)$.
Since $c(\g_e)=0$ in the present case, Proposition~\ref{deform} yields that $L$ has a component of type ${\rm A}_{p-1}$. Then all components of $L$ have type ${\rm A}$. Let $T$ be a maximal torus of $L$ and $\t=\Lie(T)$. We may assume that $L$ is associated with a subset $\Pi_0$ of a basis of simple roots $\Pi$
 of the root system $\Phi(G,T)$. Since all components of $L$ have type ${\rm A}$ we may also assume without loss that $e_0=\sum_{\alpha\in \Pi_1} e_\alpha$ for some subset $\Pi_1$ of $\Pi_0$ (possibly empty). 
Let $\t_0:=\t\cap[\l,\l]$ and $\t_1:=
 \bigcap_{\alpha\in\Pi_1}\ker\,({\rm d}\alpha)_e$.  Then $\t_0\cap\t_1$ consists of all $x\in\t_0$ such that $({\rm d}\alpha)_e(x)=0$ for all $\alpha\in\Pi_1$. 
 Since all simple components of $L$ are simply connected, looking at the $p$-ranks of the Cartan matrices associated with  $\Pi$ and $\Pi_0$ one observes that $\dim \t_0=|\Pi_0|$, $\dim \t_1=\dim\t-|\Pi_1|$ and
 $\dim(\t_0\cap\t_1)=|\Pi_0|-|\Pi_1|$ unless one of the components of
 $\Pi_1$ has type ${\rm A}_{p-1}$ in which case $\dim(\t_0\cap\t_1)=|\Pi_0|-|\Pi_1|+1$.
 
If $\Pi_1$ does not contain components of type ${\rm A}_{p-1}$ the above shows that $\dim\t_1=
|\Pi|-|\Pi_1|>|\Pi_0|-|\Pi_1|=\dim\t_0\cap\t_1.$
Hence there exists $h\in \t$ such that $h\not\in [\l,\l]$ and $[h,e_0]=0$. But then $\l_{e_0}\supsetneq [\l_{e_0},\l_{e_0}]$, so that $c(\l_{e_0})\ge 1$.
Since this contradicts Proposition~\ref{deform}, we see that $\Pi_1$ must have a component of type ${\rm A}_{p-1}$. Since $\Pi_0$ has such a component, too, they must contain the same component of type $A_{p-1}$. Let $L_1$ be the simple component of $L$ generated by the root subgroups $U_{\pm \alpha}$ with $\alpha\in\Pi_1$. Then $L_1\cong \SL_p$ as algebraic groups. The Lie algebra $\l$ acts on its ideal $\l_1:=\Lie(L_1)$ by derivations, giving a natural Lie algebra homomorphism
$\psi\colon\,\l\to\Der(\l_1)$. Since $\l_1\cong\sl_p$ we may identify $\Der(\l_1)$ with $\mathfrak{pgl}_p$. Our earlier remarks now imply that $\psi(e_0)$ is a regular nilpotent element in $\mathfrak{pgl}_p$ and hence has an abelian centraliser in $\mathfrak{pgl}_p$.
As $\psi$ maps $\l_{e_0}$ onto a nonzero subalgebra of the centraliser
of $\psi(e_0)$ in $\mathfrak{pgl}_p$, the Lie algebra $\l_{e_0}$ has a nontrivial abelian quotient. But then
$\l_{e_0}$ is not perfect, i.e. $c(\l_{e_0})\ge 1$.
Since this contradicts Proposition~\ref{deform}, the element $e$ is rigid in $\g$.
\end{proof}
\subsection{Optimal cocharacters of nilpotent elements contained in regular subalgebras} In this subsection we assume that $G$ is an arbitrary connected reductive group defined over an algebraically closed field of good characteristic. A connected reductive subgroup $K$ of $G$ is called {\it regular} if it contains a maximal torus of $G$. We denote by $\k$ the Lie algebra of $K$.
If $T$ is a maximal torus of $G$ contained in $K$ then the root system $\Phi_0$ of $K$ with respect to $T$ identifies with a root subsystem of the root system $\Phi=\Phi(G,T)$ and 
$\k=\t\oplus\sum_{\alpha\in\Phi_0}\g_\alpha$ where $\t=\Lie(T)$. Since $\k$ is a restricted Lie subalgebra of $\g$ any nilpotent element of $\k$ is contained in $\N(\g)$, the nilpotent cone of $\g$. 

If ${\rm char}(k)=0$ then a standard argument involving $\sl_2$-triples shows that
any nonzero nilpotent element $e\in\k$ admits a rational cocharacter $\lambda\colon k^\times\to G$ 
which is optimal for $e$ in the sense of the 
Kempf--Rousseau theory and has the property that $\lambda(k^\times)\subset K$. 
Our next goal is to show that this result holds
in under our assumption on $k$. 
Given a Zariski closed subgroup $H$ of $G$ we write
 $X_*(H)$ for the set of all rational cocharacters $\lambda\colon\, k^\times \to H$ and we denote by $P_H(\lambda)$ the parabolic subgroup of $H$ associated with $\lambda\in X_*(H)$.
If $e$ is a nonzero nilpotent element of $\k$  then we denote by
$\hat{\Lambda}_K(e)$ (resp. $\hat{\Lambda}_G(e)$) the set of all $\lambda\in X_*(K)$ (resp. $\lambda\in X_*(G))$ which are optimal for $e$ regarded as a $K$-unstable element
of $\k$ (resp. a $G$ unstable element of $\g$).
\begin{prop}\label{regular} If $K$ is a regular reductive subgroup of $G$ and $p={\rm char}(k)$ is a good prime for $G$ then 
for any nonzero nilpotent element of $e\in \k$ we have the inclusion $\hat{\Lambda}_K(e)\subseteq \hat{\Lambda}_G(e)$. 
\end{prop}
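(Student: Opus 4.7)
The strategy is to show that $\lambda$ satisfies the characterisation of $G$-optimal cocharacters from \cite[Theorem~2.3]{P03}: $e\in\g(\lambda,2)$, $\g_e\subseteq\g(\lambda,\ge 0)$, and $[e,\g(\lambda,\ge 0)]=\g(\lambda,\ge 2)$. First I would observe that since $K$ is regular, its root system $\Phi_0$ (relative to a maximal torus $T\subset K$ which is also a maximal torus of $G$) is a closed subsystem of $\Phi$; consequently $p$ good for $G$ forces $p$ good for $K$, and the same characterisation applies to the pair $(K,e)$.

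Given $\lambda\in\hat{\Lambda}_K(e)$, after $K$-conjugation (which preserves the conclusion) I may assume $\lambda\in X_*(T)$. The $T$-stable decomposition $\g=\k\oplus\m$ with $\m:=\bigoplus_{\alpha\in\Phi\setminus\Phi_0}\g_\alpha$ refines the $\lambda$-grading via $\g(\lambda,i)=\k(\lambda,i)\oplus\m(\lambda,i)$. Closedness of $\Phi_0$ in $\Phi$ yields $[\k,\m]\subseteq\m$, making $\m$ a rational $K$-module and giving $\g_e=\k_e\oplus(\m\cap\g_e)$. The containment $e\in\g(\lambda,2)$ is immediate, and combining the $K$-optimality of $\lambda$ (which supplies the $\k$-versions of the other two conditions) with these splittings reduces the proof to establishing
\[
\m\cap\g_e\subseteq\m(\lambda,\ge 0)\qquad\text{and}\qquad[e,\m(\lambda,\ge 0)]=\m(\lambda,\ge 2).
\]

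To verify these two identities I would use Jacobson--Morozov in good characteristic for $K$ to embed $e$ in an $\sl_2$-triple $(e,(\mathrm{d}\lambda)(1),f)\subset\k$, and then appeal to the principle---implicit in \cite[\S 2]{P03} and treated explicitly by Jantzen---that an associated cocharacter for $e$ controls the kernel and image of $\ad e$ on every rational representation of $K$, not merely on the adjoint module. Since $\m$ is a $\k$-stable complement of $\k$ in $\g$, the good-characteristic decomposition of $\g$ under this $\sl_2$-triple restricts to $\m$ and yields both identities. The main obstacle is precisely this step: while the analogous assertion for $\k$ itself is \cite[Theorem~2.3]{P03}, propagating it to the complement $\m$ requires the nontrivial fact that in good characteristic the $h$-weight structure on $\g$ under any distinguished $\sl_2$-triple is compatible with standard $\sl_2$-module theory, regardless of whether $p$ exceeds the Coxeter number of $G$. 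Granting this, the three conditions of \cite[Theorem~2.3]{P03} hold for $\lambda$ as a cocharacter of $G$, so $\lambda\in\hat{\Lambda}_G(e)$.
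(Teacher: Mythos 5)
Your overall strategy — verify Premet's criterion for optimality after passing to the $T$-stable decomposition $\g=\k\oplus\m$ — is reasonable, but the proof stalls exactly where you flag the ``main obstacle,'' and the repair you sketch does not close the gap. The principle you invoke, that a $K$-optimal (associated) cocharacter controls $\ker(\ad e)$ and $\im(\ad e)$ as in classical $\sl_2$-theory on an \emph{arbitrary} rational $K$-module $\m$, is false in the range of characteristics you must handle. ``Good for $G$'' can be far below the Coxeter number, and the restriction of $\g$ to a regular subgroup $K$ can have composition factors of large highest weight. Already for $K=\SL_2$ and $V=L(p)=L(1)^{[1]}$, the nilpotent $e$ acts by zero (the differential of Frobenius vanishes on $\sl_2$), so $\ker(\ad e)$ is all of $V$ and certainly not contained in $V(\lambda,\ge 0)$. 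There is no abstract $\sl_2$-module-theoretic route to $\m\cap\g_e\subseteq\m(\lambda,\ge 0)$; the fact you ``grant yourself'' is essentially the statement being proved. (There is also a smaller normalisation issue: $\hat{\Lambda}_K(e)$ is the full optimal class, closed under positive scaling and $P_K(\lambda)$-conjugacy, so the conditions $e\in\g(\lambda,2)$, etc. only make sense after choosing a representative with $m(\lambda,e)=2$; this is fixable but should be said.)

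The paper avoids this difficulty entirely by a structural, group-theoretic argument that never touches the module $\m$. After reducing to $K$ maximal connected reductive and $G$ adjoint semisimple, it invokes Borel--de Siebenthal to realise $\k=\g^\sigma$ for a torsion element $\sigma$ of a maximal torus $T\subseteq K$. Then $\sigma\in G_e\subseteq P_G(\lambda)$ for any $\lambda\in\hat{\Lambda}_G(e)$, while $\sigma$ is central in $K$ and hence lies in $P_K(\lambda_0)\subseteq P_G(\lambda_0)$ for any $\lambda_0\in\hat{\Lambda}_K(e)$. A maximal torus $T'$ of the connected group $P_G(\lambda)\cap P_G(\lambda_0)$ containing $\sigma$ must centralise $\sigma$, hence lie in $K$; one then uses that $X_*(T')\cap\hat{\Lambda}_G(e)$ is nonempty and compares the optimality ratios $m(\cdot,e)/\|\cdot\|$ to conclude $X_*(T')\cap\hat{\Lambda}_G(e)\subseteq\hat{\Lambda}_K(e)$, which suffices. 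If you wish to pursue your decomposition approach you would need a genuinely new input to control $\m\cap\g_e$, and that is precisely what is hard in small good characteristic.
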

\begin{proof} It proving this proposition we may assume without loss of generality that $K$ is a maximal connected reductive subgroup of $G$.  
Thanks to\cite[Lemma~14]{McN04} we may also assume that
$G$ is a semisimple group of adjoint type. Let $\Phi$ be the root system of $G$ with respect to a maximal torus $T$ contained in $K$ and let $\Phi_0$ be the set of roots of $K$ with respect to $T$. The maximality of $K$ then shows that $\Phi_0$ is a maximal root subsystem of $\Phi$. Choose a basis of simple roots $\Pi$ in $\Phi$ and let $\tilde{\alpha}_1, \ldots, \tilde{\alpha}_s$
 be all highest roots of $\Phi$ with respect to $\Pi$
(here $s$ is the number of the simple components of $G$). For $1\le i\le s$ write $\tilde{\alpha}_i=\sum_{\alpha\in \Pi}\,n(\alpha,i) \alpha$. In view of the Borel--de Siebenthal theorem, no generality will be lost by assuming that either $K$ is a maximal standard Levi subgroup of $G$ or there exist $\alpha_0\in \Pi$ and a prime number $q$ dividing $n(\alpha_0,i_0)$, where $i_0$ is the unique index $j\le s$ for which $n(\alpha_0,j)\ne 0$, such that $\Phi_0=\{\gamma=\sum_{\alpha\in\Pi}\,
 r_\gamma(\alpha)\alpha\in\Phi\,|\,\,q\mbox{ divides } r_\gamma(\alpha_0)\}$. 
 
It is well known that there is a canonical duality
$\langle\cdot\,,\,\cdot\rangle$ between $X_*(T)$ and the lattice of rational characters $X^*(T)$. Let $\{\varpi^\vee_\alpha\,|\,\,\alpha\in \Pi\}\subset X_*(T)$  be the system of fundamental coweights corresponding to $\Pi\subset X^*(T)$. It has the property that 
$\langle\varpi^\vee_\alpha,\beta \rangle=\delta_{\alpha,\beta}$ for all $\alpha,\beta\in \Pi$. 
Since $G$ is a group of adjoint type, 
for every $\alpha\in\Pi$ there is  
a $1$-parameter subgroup $\varpi^\vee_\alpha(k^\times)$ in $T$ such that
\begin{eqnarray}\label{omega}\big(\Ad\,\varpi_\alpha^\vee(t)\big)(e_\gamma)=t^{r_\gamma(\alpha)}e_\gamma\,\qquad\ \big(\forall\gamma\in\Phi,\,\forall
e_\gamma\in \g_\gamma,\,\forall\,t\in k^\times\big).\end{eqnarray} 
Since $p$ is a good prime for $G$ we have that $q\ne {\rm char}(k)$. Hence $k^\times$ contains a $q$-th primitive root of $1$. Thanks to (\ref{omega}) and the preceding discussion that this implies that $T$
contains an element $\sigma$ such that $\k$ coincides with $\g^\sigma$, the Lie algebra of the fixed point group $G^\sigma$. 

Let $\lambda_0\in \hat{\Lambda}_K(e)$ and $\lambda\in\hat{\Lambda}_G(e)$. As $\k=\g^\sigma$ and $G_e\subset P_G(\lambda)$ by the optimality of $P_G(\lambda)$ it must be that $\sigma\in P_G(\lambda)$.
As $T\subset K$, the element $\sigma$ lies in the centre of $K$.
As $K$ is connected, the latter is contained in any parabolic subgroup of $K$. So
$\sigma\in P_K(\lambda_0)\subset P_G(\lambda_0)$. But then
$\sigma$ belongs to the connected group $P_G(\lambda)\cap P_G(\lambda_0)$ and therefore lies in a
maximal torus of $P_G(\lambda)\cap P_G(\lambda_0)$; we call it $T'$. Note that $T'\subset K$ because $T'$ is a connected group commuting with $\sigma$. 

Since $T'$ is a maximal torus of $P_G(\lambda)$, the set $X_*(T')\cap \hat{\Lambda}_G(e)$ is nonempty; see \cite[Theorem~2.1(iii)]{P03}, for example.
We claim that $X_*(T')\cap \hat{\Lambda}_G(e)\subset\hat{\Lambda}_K(e)$. To prove this
we adopt the notation and conventions of \cite[2.2]{P03}
and recall that there exists $\lambda'\in X_*(T')\cap \hat{\Lambda}_G(e)$ for which
$m(\lambda',e)=2$; see \cite[Theorems~2.3(i) and 2.7]{P03}.
Since $\lambda'\in\hat{\Lambda}_G(e)$ we have that
$$\frac{m(\lambda',e)}{\|\lambda'\|}\,\ge\,\frac{m(\mu,e)}{\|\mu\|}\qquad\ \big(\forall\,\mu\in \hat{\Lambda}_K(e)\big).$$
But it also follows from {\it loc.\,cit.} that there is a $\mu'\in\hat{\Lambda}_K(e)$ for which $m(\mu',e)=2$ (here we regard $e$ as an element of $\k$).
From this it is immediate that $\lambda'\in\hat{\Lambda}_K(e)$. Since the quantity $m(\mu,e)/\|\mu\|$ is independent of the choice of $\mu\in\hat{\Lambda}_K(e)$ the claim follows and completes the proof.
\end{proof}
Working over $\C$ de Graaf and Elashvili determine the weighted Dynkin diagram of the unique nilpotent orbit contained in a sheet $\mathcal{S}$ of an exceptional Lie algebra $\g_\C$ and give a nice representative $e_{\Gamma,\C}=\sum_{\gamma\in\Gamma}\,e_{\gamma,\C}$ in that orbit. 
Here $\Gamma=\Gamma(\mathcal{S})$ is a subset of roots the root system $\Phi$ of $\g_\C$ and $e_{\gamma,\C}$ is a root vector of $\g_\C$ corresponding to $\gamma\in\Phi$. 
Since each set $\Gamma$ consists of linearly independent roots, the $G_\C$-orbit of $e_{\Gamma,\C}$ is independent of the choices of root vectors
$e_{\gamma,\C}$ (in the sense that each of them can be rescaled). In \cite{dG-E}, the roots in each set $\Gamma$ are labelled by their positions in the ordering of positive roots used in GAP4\footnote{We are thankful to Simon Goodwin for bringing this to our attention}. 

To each set $\Gamma$ de Graaf and Elashvili assign a diagram $D(\Gamma)$. Specifically, the number of nodes of $D(\Gamma)$ equals the cardinality of $\Gamma$ and the nodes depicting distinct $\gamma,\gamma'\in \Gamma$ are linked by $\langle \gamma,\gamma'\rangle\cdot \langle \gamma',\gamma\rangle
$ edges. The edges are solid (resp. dotted) if $\langle \gamma,\gamma'\rangle$ is negative (resp. positive).
If $\Phi$ has roots of two different lengths then the nodes of $D(\Gamma)$ corresponding to long roots
of $\Phi$ are coloured in black. 

Since $\g=\Lie(G)$ contains a natural analogue $e_\Gamma$ of $e_{\Gamma,\C}$ we wonder whether the nilpotent orbits of $e_\Gamma$ and $e_{\Gamma,\C}$ have the same weighted Dynkin diagram. In the next section we give a positive answer to this question by computational methods, but our next result indicates that in many cases such computations can be avoided.
\begin{cor}\label{princ}
Suppose that $p$ is a good prime for $G$, all roots in $\Phi$ have the same length, and $\Gamma$ is such that 
$\gamma-\gamma'\not\in\Phi$ for all distinct $\gamma,\gamma'\in\Gamma$. If 
$D(\Gamma)$ is a disjoint union of Dynkin graphs then the nilpotent orbits in $\g$ and $\g_\C$ containing $e_\Gamma\in \g$ and $e_{\Gamma,\C}$, respectively,
have the same labels and dimensions.
\end{cor}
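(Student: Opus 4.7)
The plan is to realise $e_\Gamma$ as the regular (principal) nilpotent of a regular reductive subgroup $K$ of $G$ and then transport optimal cocharacters from $K$ to $G$ by means of Proposition~\ref{regular}; the main obstacle is pinning down this structural identification, after which everything reduces to root-theoretic combinatorics independent of characteristic.

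First, the hypotheses on $\Gamma$ force a very clean picture. Because $\Phi$ is simply-laced we have $\langle\gamma,\gamma'\rangle\in\{-1,0,1\}$ for distinct $\gamma,\gamma'\in\Gamma$, and the value $1$ would force $s_{\gamma'}(\gamma)=\gamma-\gamma'\in\Phi$, which is excluded. Thus every edge of $D(\Gamma)$ is a single solid edge, and the connected components of $D(\Gamma)$, being Dynkin graphs, must be of type A, D or E. Consequently $\Gamma$ serves as a simple system for the sub-root system $\Phi_0\subseteq\Phi$ obtained as the orbit $W_\Gamma\cdot\Gamma$ of $\Gamma$ under $W_\Gamma=\langle s_\gamma\mid\gamma\in\Gamma\rangle\le W$, and the type of $\Phi_0$ matches $D(\Gamma)$. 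Let $T$ be the maximal torus with respect to which the root vectors $e_\gamma$ were chosen, and let $K\subseteq G$ be the connected reductive subgroup generated by $T$ and the root subgroups $U_{\pm\gamma}$, $\gamma\in\Phi_0$. Then $K$ is regular reductive in $G$, its derived subgroup $[K,K]$ is a product of simply-laced simple groups matching the components of $D(\Gamma)$, and since $\Phi$ is simply-laced any prime good for $G$ is good for $K$ as well.

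Inside $[K,K]$ the element $e_\Gamma=\sum_{\gamma\in\Gamma}e_\gamma$ projects onto each simple summand as a sum of one root vector per simple root, i.e.\ as a regular (principal) nilpotent. Its canonically associated optimal cocharacter in good characteristic is the $\lambda_K\in X_*(T)$ defined by $\langle\lambda_K,\gamma\rangle=2$ for every $\gamma\in\Gamma$, and the weighted Dynkin diagram of $e_\Gamma$ inside $[\k,\k]$ is then $(2,\ldots,2)$. By Proposition~\ref{regular} we have $\lambda_K\in\hat{\Lambda}_K(e_\Gamma)\subseteq\hat{\Lambda}_G(e_\Gamma)$, so $\lambda_K$ is also optimal for $e_\Gamma$ viewed as an element of $\g$. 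The weighted Dynkin diagram of $\OO(e_\Gamma)\subset\g$ is then read off from the integers $\langle w\lambda_K,\alpha\rangle$, $\alpha\in\Pi$, where $w\in W$ carries $\lambda_K$ into the dominant chamber; this data depends only on $\Phi$, $\Pi$ and $\Gamma$, not on $p$, so it agrees with the diagram produced by the same recipe for $e_{\Gamma,\C}\in\g_\C$.

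For the dimensions, \cite[Theorem~2.3]{P03} gives $\dim\g_{e_\Gamma}=\dim\g(\lambda_K,0)+\dim\g(\lambda_K,1)$, with each $\dim\g(\lambda_K,i)$ equal to the cardinality of $\{\alpha\in\Phi\mid\langle\lambda_K,\alpha\rangle=i\}$ (plus $\dim\t$ when $i=0$). These cardinalities are characteristic-free, so $\dim\OO(e_\Gamma)=\dim\OO(e_{\Gamma,\C})$. Since in good characteristic nilpotent orbits carry the same labels as their complex counterparts \cite[pp.~401--407]{Car93}, the equality of weighted Dynkin diagrams yields equality of labels and completes the argument.
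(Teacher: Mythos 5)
Your proof is correct and follows essentially the same route as the paper: realise $e_\Gamma$ as a regular nilpotent of the regular reductive subgroup $K$ with root system $\Phi_0=W_\Gamma\cdot\Gamma$, apply Proposition~\ref{regular} to transport the optimal cocharacter from $K$ to $G$, and observe that the resulting weighted Dynkin diagram (and hence label and dimension) is determined by purely root-theoretic data independent of $p$. One small imprecision: saying $\lambda_K$ is ``defined by $\langle\lambda_K,\gamma\rangle=2$ for every $\gamma\in\Gamma$'' does not pin it down inside $X_*(T)$; you should require in addition that $\lambda_K$ lie in the $\Z$-span of $\{\gamma^\vee\mid\gamma\in\Gamma\}$, as the paper does when it writes $\lambda=\sum_{\gamma\in\Gamma}a_\gamma\gamma^\vee$, and this is precisely what makes the coefficients $a_\gamma$ visibly characteristic-free.
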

 \begin{proof} 
We may assume that 
$\Phi$ is the root system of $G$ with respect to a maximal torus $T$ of $G$ and $\Gamma$ is contained in the positive system $\Phi_+(\Pi)$ associated with a basis of simple roots $\Pi$ of $\Phi$. 
Let $W_0$ be the subgroup of the Weyl group $W(\Phi)$
generated by all reflections $s_\gamma$ with $\gamma\in\Gamma$ and put $\Phi(\Gamma):=\{w(\gamma)\,|\,\,w\in W_0,,\gamma\in\Gamma\}$. Using our assumptions on $\Gamma$ it is straightforward to see that $\Phi(\Gamma)=\Phi_+(\Gamma)
\sqcup -\Phi_+(\Gamma)$ where $\Phi_+(\Gamma)$ consists of roots in $\Phi$ which can be presented as
linear combinations of of elements in $\Gamma$ with coefficients in $\Z_{\ge 0}$. 
If follows
that $\Phi(\Gamma)$ is a root system in the $\Q$-span of $\Gamma$ and $\Gamma$ is a basis of simple roots in $\Phi(\Gamma)$.

Let $K$ be be the connected subgroup of $G$ generated by $T$ and all root subgroups $U_{\pm\gamma}$ with $\gamma\in\Phi(\Gamma)$. 
The preceding discussion shows that the quadruple
$\big(X^*(T),\Phi(\Gamma),\Phi(\Gamma)^\vee,X_*(T)\big)$
is a root datum for $K$. In particular,
this implies that $\Phi(\Gamma)$ is the root system of $K$ with respect to $T$. Since $\Gamma$ is a basis of simple roots of $\Phi(\Gamma)$, we now see that
$e_\Gamma$ is a regular nilpotent element of the Lie algebra $\k=\Lie(K)$. 
Proposition~\ref{regular} shows that the set $\hat{\Lambda}_K(e_\Gamma)$ consists of optimal cocharacters of $e_\Gamma\in\g$. On the other hand, it is well known that
$\hat{\Lambda}_K(e_\Gamma)$ contains a unique element $\lambda=\sum_{\gamma\in\Gamma}a_\gamma\gamma^\vee\in X_*(T)$ with $a_\gamma\in\Z$ which satisfies the conditions $\langle\lambda,\gamma\rangle=2$ for all $\gamma\in\Gamma$. Moreover, 
the coefficients $a_\gamma$ are independent of $p$ and
$\lambda$ lies in $\hat{\Lambda}_{G_\C}(e_{\Gamma,\C})$ when regarded as an element of $X_*(G_\C)$. 
In view of \cite[2.4]{P03} this yields that the nilpotent orbits $\OO(e_\Gamma)\subset\g$ and $\OO(e_{\Gamma,\C})\subset\g_\C$ have the same labels and dimensions.
 \end{proof}
\subsection{Comparing rigid orbits in $\g$ and $\g_\C$}
From now on we assume that $G$ is a simply
connected algebraic group of type ${\rm G}_2$, ${\rm F}_4$, ${\rm E}_6$, ${\rm E}_7$ or ${\rm E}_8$ and denote by $G_\C$ the complex counterpart of $G$. We identify the root system $\Phi$ of $G$ with that of $G_\C$ and 
let $\Pi$ be a basis of simple roots.
Since $p$ is good for $G$, the nilpotent orbits in $\g$ and $\g_\C=\Lie(G_\C)$ are parametrised by their weighted Dynkin diagrams $D\colon\,\Pi\to \{0,1,2\}$. We let $\OO_\C(D)$ be the nilpotent orbit in $\g_\C$ corresponding to $D$. It follows from \cite[Theorem~2.3]{P03}, for example, that
$\dim_k\OO(D)=\dim_\C\OO_\C(D)$. We shall denote this number by $N(D)$.

Combining \cite{DeG13} and \cite{dG-E} with the results of the previous subsection we obtain the following:
\begin{lemma}\label{rigid1} Let $D\colon\,\Pi\to \{0,1,2\}$ be a weighted Dynkin diagram. If the orbit $\OO_\C(D)$ is rigid in $\g_\C$ then the orbit $\OO(D)$ is rigid in $\g$.
\end{lemma}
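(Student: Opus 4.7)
The plan is to invoke Corollary~\ref{almostrigid}, which gives the sufficient condition $\g_e = [\g_e, \g_e]$ for rigidity of a nilpotent $e \in \g$, and to reduce the lemma to a finite case-by-case verification. The key fact driving the reduction is that the quantity $c(e) = \dim(\g_e/[\g_e,\g_e])$ is invariant across good characteristic, so the condition $c(e) = 0$ transfers from $\g_\C$ to $\g$ once it is known to hold in characteristic zero.

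Concretely, I would proceed in the following steps. First, appeal to \cite{dG-E}, which rechecks Elashvili's original classification \cite{Elashvili} by computer, to obtain the explicit finite list of weighted Dynkin diagrams $D$ for which $\OO_\C(D)$ is rigid in $\g_\C$, separately for each exceptional type. Second, for each such $D$, extract from the tables of \cite{DeG13} the value of $c(\g_{e,\C})$; by analogy with Yakimova's theorem for classical types \cite{Ya} one expects---and \cite{DeG13} should confirm case-by-case---that every rigid orbit in $\g_\C$ has a perfect centraliser, i.e.~$c(\g_{e,\C}) = 0$. Third, apply the characteristic-independence of $c(e)$ in good characteristic, established case-by-case in Tables~\ref{t3} and~\ref{t4} of the present paper, to conclude that $\g_e = [\g_e, \g_e]$ for $e \in \OO(D) \subset \g$ as well. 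Fourth, apply Corollary~\ref{almostrigid} to conclude that $\OO(D)$ is rigid in $\g$.

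The main obstacle is the verification, for each exceptional type separately, that every rigid orbit in $\g_\C$ satisfies the perfect-centraliser condition $\g_{e,\C} = [\g_{e,\C}, \g_{e,\C}]$. This is genuinely the computational content of \cite{DeG13} combined with Elashvili's list and cannot be avoided with the tools available so far. Should the perfect-centraliser condition fail for some exceptional rigid $\OO_\C(D)$, the fallback would be to assume for contradiction that $\OO(D)$ is induced from some proper pair $(\l, e_0)$ in $\g$, then apply Proposition~\ref{deform} to obtain the bounds $c(\g_e) \ge c(\l_{e_0})$ and $\dim \z(\g_e) \ge \dim \z(\l_{e_0})$ (with the improved versions when $L$ has no component of type ${\rm A}_{rp-1}$), and check these against the tabulated values of $c(e)$ and $\dim \z(\g_e)$ to rule out every candidate Levi $\l$ and nilpotent $e_0 \in \l$. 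Transitivity of induction (equation~(\ref{e3})) further restricts the candidate $e_0$ to rigid orbits in proper Levi subalgebras, which are themselves listed in \cite{dG-E}, keeping the case analysis finite.
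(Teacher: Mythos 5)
Your first route does cover the majority of cases and matches what the paper does there: when $\g_{e,\C}$ is perfect, Tables~\ref{t1}--\ref{t4} show $\g_e$ is perfect in all good characteristics, and Corollary~\ref{almostrigid} finishes. However, the expectation driving the argument---that every rigid orbit in $\g_\C$ has a perfect centraliser, by analogy with Yakimova's classical-type theorem---is simply false for the exceptional types. There are exactly six rigid orbits in the exceptional $\g_\C$ whose centralisers are \emph{not} perfect: $\widetilde{\rm A}_1$ in ${\rm G}_2$, $\widetilde{\rm A}_2+{\rm A}_1$ in ${\rm F}_4$, $({\rm A}_3+{\rm A}_1)'$ in ${\rm E}_7$, and ${\rm A}_3+{\rm A}_1$, ${\rm A}_5+{\rm A}_1$, ${\rm D}_5({\rm a}_1)+{\rm A}_2$ in ${\rm E}_8$. (This is visible in Tables~\ref{t3} and~\ref{t4}: all six have $c(\g_e)\ge 1$ in large characteristic.) So the ``main plan'' cannot close the argument on its own, and the fallback is not a contingency but a necessity.

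Your fallback via Proposition~\ref{deform} is the right tool for only one of these six cases, and even there it is not sufficient by itself. For the five cases other than $\widetilde{\rm A}_2+{\rm A}_1$ in ${\rm F}_4$, the paper uses a cleaner dimension-counting argument that you have not identified: by Theorem~\ref{thm2.5}, sheets of $\g$ biject with sheets of $\g_\C$ preserving both dimension and the dimension of the unique nilpotent orbit inside, so if $\OO(D)$ were induced there would have to exist an \emph{induced} nilpotent orbit of dimension $N(D)$ in $\g_\C$. But inspecting the orbit dimensions, either $\OO_\C(D)$ is the unique orbit of dimension $N(D)$ (and it is rigid), or---in the ${\rm A}_5+{\rm A}_1$ and ${\rm D}_5({\rm a}_1)+{\rm A}_2$ cases in ${\rm E}_8$---there are exactly two orbits of that dimension and both are rigid. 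Either way there is no induced orbit of dimension $N(D)$ in $\g_\C$, a contradiction. Only for $\widetilde{\rm A}_2+{\rm A}_1$ in ${\rm F}_4$ does the paper invoke Proposition~\ref{deform} as you propose, and even then it must be combined with sheet-rank information (Proposition~\ref{prop2.4}, Theorem~\ref{thm2.5}, Remark~\ref{r5}), the classification of rigid orbits in classical Levi factors from \cite{PT14}, Proposition~\ref{prop3.2}(ii), and \cite[Table~10]{dG-E} to pin down the unique sheet of dimension $37$. So the gap in your proposal is twofold: the false expectation in the main branch, and an underdeveloped fallback that as stated would not close cases (a), (c), (d) without the Theorem~\ref{thm2.5} dimension comparison, and would not close case (b) without the additional sheet-theoretic inputs.
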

\begin{proof}
Suppose $\OO_\C(D)$ is rigid in $\g_\C$ and pick $e'\in\OO_\C(D)$ and $e\in\OO(D)$. It follows from \cite{DeG13} that
either the centraliser of $e'$ in $\g_\C$ is perfect or $\OO_\C(D)$ has one of the following types: 

\begin{itemize}
\item [(a)\ ] $\widetilde{\rm A}_1$ in type ${\rm G}_2$;
\smallskip
\item [(b)\ ]$\widetilde{\rm A}_2+{\rm A}_1$ in type ${\rm F}_4$;
\smallskip
\item [(c)\ ] $({\rm A}_3+{\rm A}_1)'$ in type ${\rm E}_7$;
\smallskip
\item [(d)\ ]
${\rm A}_3+{\rm A}_1$, ${\rm A}_5+{\rm A}_1$ or ${\rm D}_5({\rm a}_1)+{\rm A}_2$ in type ${\rm E}_8$.
\end{itemize} 
If the centraliser of $e'$ in $\g_\C$ is perfect then
it follows from Table~\ref{t1} that the same holds for
the centraliser of $e$ in $\g$. In that case the orbit $\OO(D)$ is rigid in $\g$ by Corollary~\ref{almostrigid}.

If $\OO(D)$ is listed in parts~(a), (c), (d) then analysing the tables in \cite[pp.~401--407]{Car93} one finds that
either $\OO(D)$ is the only orbit of dimension $N(D)$ in $\g$ or $G$ is of type ${\rm E}_8$ and $\OO(D)$ is one of ${\rm A}_5+{\rm A}_1$ or ${\rm D}_5({\rm a}_1)+{\rm A}_2$.
If $\OO(D)$ is the only orbit of dimension $N(D)$ in $\g$ then it cannot be induced from a proper Levi subalgebra, because otherwise the same would be true for $\OO_\C(D)$ by Theorem~\ref{thm2.5}. In type ${\rm E}_8$, 
there are two nilpotent orbits of dimension $202$ and they have types
${\rm A}_5+{\rm A}_1$ and ${\rm D}_5({\rm a}_1)+{\rm A}_2$. Moreover, both are rigid in characteristic $0$ by \cite{dG-E}. Therefore, their counterparts in $\N(\g)$ cannot be induced from proper Levi subalgebras by Theorem~\ref{thm2.5}.

Finally, let $\OO(D)$ be as in part~(b). Then $N(D)=36$ and $c(\g_e)=1$ by 
Table~\ref{t4}. Suppose for a contradiction that the orbit $\OO(D)$ is induced from a nilpotent orbit $\OO_0$ in a proper Levi subalgebra $\l=\Lie(L)$ of $\g$. Since in the present case $G$ has type ${\rm F}_4$ and $p$ is good for $G$, the Levi subgroup $L$ cannot have components of type ${\rm A}_{rp-1}$. Applying Proposition~\ref{deform} we then get $c([\l,\l]_{e_0})+\dim \z(\l)\le 1$ which implies that $\dim \z(\l)=1$ and $c([\l_{e_0},\l_{e_0}])=0$ for any $e_0\in\OO_0$. Since $L$ has no components of exceptional types, the orbit $\OO_0$ must be rigid in $\l$ by \cite[Theorem~3(i)]{PT14}. Combining 
Theorem~\ref{thm2.5} and Proposition~\ref{prop2.4} we conclude that
$\OO(D)$ lies in a sheet of $\g$ of dimension $N(D)+\dim\z(\l)=37$. But Theorem~\ref{thm2.5} also shows that there is a bijection $\mathcal{S}\to \mathcal{S}_\C$ between the sheets of $\g$ and $\g_\C$ such that $\dim_k \mathcal{S}=\dim_\C \mathcal{S_\C}$ for all sheets $\mathcal{S}$ of $\g$. By
\cite[Table~10]{dG-E}, there exists only one sheet of dimension $37$ in $\g_\C$ and the nilpotent orbit contained in it has type ${\rm B}_2$. 
Using Proposition~\ref{prop3.2}(ii) it is not hard to observe that the orbit of that type in $\g$ is induced
from the minimal nilpotent orbit in a Levi subalgebra of type ${\rm C}_3$.  In view of Remark~\ref{r5} this implies that $\OO(D)$ cannot lie in the unique sheet of dimension $37$ in $\g$. By contradiction the result follows.
\end{proof}
We now in a position to prove one of the main results of this paper:
\begin{theorem}\label{rigid}
The orbit $\OO(D)$ is rigid in $\g$ if and only if the orbit $\OO_\C(D)$ is rigid in $\g_\C$.
\end{theorem}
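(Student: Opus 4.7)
My plan is to establish the converse direction (the forward direction is Lemma~\ref{rigid1}) by contrapositive: assuming $\OO_\C(D)$ is not rigid in $\g_\C$, I show that $\OO(D)$ is not rigid in $\g$. By hypothesis, there exist a proper Levi $\l_\C = \Lie(L_\C)$ of $\g_\C$ and a rigid nilpotent $e'_0 \in \l_\C$ with $\OO_\C(D) = \Ind_{\l_\C}^{\g_\C}\OO_{L_\C}(e'_0)$. I take $\l = \Lie(L)$ to be the Levi subalgebra of $\g$ cut out by the same subset of simple roots, and let $e_0 \in \l$ lie in the nilpotent $L$-orbit with the same weighted Dynkin diagram (in $\l$) as $\OO_{L_\C}(e'_0)$.

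The first step is to transfer rigidity of $\OO_{L_\C}(e'_0)$ down to $\l$. Classical simple factors of $\l$ are handled by the characteristic-independent Kempken--Spaltenstein classification, while exceptional simple factors (necessarily of smaller rank than $G$) are handled by Lemma~\ref{rigid1} applied to each such factor separately. This yields that $\OO_L(e_0)$ is rigid in $\l$. The second step is to form $\tilde{\OO} := \Ind_\l^\g\OO_L(e_0) \subseteq \N(\g)$; Proposition~\ref{prop2.4} gives $\dim\tilde{\OO} = \dim\g - \dim\l_{e_0}$, and since the weighted Dynkin diagram determines the centralizer dimension independently of characteristic via \cite[Theorem~2.3]{P03}, we have $\dim\tilde{\OO} = \dim\OO_\C(D) = N(D) = \dim\OO(D)$.

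The crux is identifying $\tilde{\OO}$ with $\OO(D)$, since a priori several orbits in $\g$ could share this dimension. When $D$ is not odd, Proposition~\ref{prop3.2}(i) expresses $\OO(D)$ as an induced orbit from a specific Levi in a characteristic-independent way, and matching this expression against $(\l, \OO_L(e_0))$ via transitivity of induction (\ref{e3}) performs the identification. For odd $D$ (or any case where the canonical Levi produced by Proposition~\ref{prop3.2} does not match $\l$), I would fall back on the explicit sheet representatives of de Graaf--Elashvili \cite{dG-E}: each sheet $\mathcal{S}_\C$ carries a root-theoretic representative $e_\Gamma = \sum_{\gamma\in\Gamma} e_\gamma$, the same datum defines $e_\Gamma \in \g$, and in cases covered by Corollary~\ref{princ} a direct argument pins down the label; otherwise a Jordan-block computation for $\ad\,e_\Gamma$ (as sketched for Theorem~\ref{thm:distrib} in the introduction) forces $\tilde{\OO}$ to carry label $D$. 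The main obstacle is precisely this last label-identification step, which resists a uniform combinatorial proof in the odd case and must be handled case by case through the tables of \cite{dG-E}; this is the source of the paper's acknowledged reliance on GAP.
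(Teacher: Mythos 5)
Your overall strategy is the same as the paper's: take the contrapositive, pull the Levi pair $(\l_\C,e_0')$ down to $(\l,e_0)$ over $k$, form $\tilde\OO=\Ind_\l^\g\OO_L(e_0)$ of dimension $N(D)$, and then identify $\tilde\OO$ with $\OO(D)$. The two smaller observations to make first: (a) the rigidity transfer for $e_0$ is unnecessary — you only need $\l$ to be a \emph{proper} Levi and $e_0$ to be a nilpotent element inducing to $\OO_\C(D)$, not a rigid one, so the appeal to Kempken--Spaltenstein and a second use of Lemma~\ref{rigid1} can be dropped; and (b) for non-odd $D$ the discussion of ``matching $(\l,\OO_L(e_0))$ via transitivity'' is a detour — Proposition~\ref{prop3.2}(i) already expresses $\OO(D)$ as an induced orbit in $\g$, so $\OO(D)$ is not rigid and there is nothing left to identify.

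The substantive gap is that you have not noticed the reduction that makes the odd case tractable. After restricting to odd $D$, the paper observes that if $\OO_\C(D)$ is the \emph{only} nilpotent orbit of dimension $N(D)$ in $\g_\C$, then $\tilde\OO$ (having the same dimension) must equal $\OO(D)$, so the identification is automatic and requires no representative or Jordan-block work at all. This reduces the orbits requiring genuine case analysis to a very short list: two in $E_7$ and seven in $E_8$, of which $A_4+A_1$ (type $E_8$) is dispatched by a dimension-count argument involving the rigid orbit $2A_3$, five more fall under Corollary~\ref{princ}, and only three ($D_4(a_1)+A_1$ in $E_7$; $D_6(a_2)$, $D_7(a_2)$ in $E_8$) need GAP. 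Your plan would instead run the Jordan-block identification for \emph{every} odd non-rigid orbit — roughly the full content of Theorem~\ref{thm:distrib} — which is logically sound (Theorem~\ref{thm:distrib} does imply the hard direction of Theorem~\ref{rigid}, and the paper itself borrows its last three cases from that computation) but defeats the purpose of isolating Theorem~\ref{rigid} as a statement with a largely hands-free proof. If you intend this as a standalone argument for Theorem~\ref{rigid}, you should add the unique-dimension reduction before resorting to the tables of \cite{dG-E}.
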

\begin{proof}
Let $D\colon\,\Pi\to \{0,1,2\}$ be a weighted Dynkin diagram such that the orbit $\OO(D)$ is rigid in $\g$. In view of Lemma~\ref{rigid1} we need to show that the the orbit $\OO_\C(D)$ is rigid in $\g_\C$. So suppose 
$\OO_\C(D)$ is induced from a proper Levi subalgebra of $\g_\C$. Then $\OO(D)\ne \{0\}$ and thanks to Proposition~\ref{prop3.2}(i) we may assume that $D(\Pi)=\{0,1\}$. If $\OO_\C(D)$ is the only orbit of dimension $N(D)$ in $\g_\C$ then Theorem~\ref{thm2.5} implies that the orbit $\OO(D)$ must be induced in $\g$.

Thus we may assume from now that the orbit $\OO_\C(D)$ is odd, induced, and there are at least two nilpotent orbits of dimension $N(D)$ in $\g_\C$. Looking at the tables in \cite[pp.~401,402]{Car93} one finds out that 
this never happens in types ${\rm G}_2$, ${\rm F}_4$ and ${\rm E}_6$. In type ${\rm E}_7$, there exist only two orbits fulfilling these conditions, namely, ${\rm A}_3+2{\rm A}_1$ and ${\rm D}_4({\rm a}_1)+{\rm A}_1$. In type ${\rm E}_8$ we have to examine more closely the following orbits:
${\rm A}_4+{\rm A}_1$, ${\rm A}_4+{\rm A}_2+{\rm A}_1$,
${\rm D}_6({\rm a}_2)$, ${\rm E}_6({\rm a}_3)+{\rm A}_1$,
${\rm A}_6+{\rm A}_1$, ${\rm A}_7$ and ${\rm D}_7({\rm a}_2)$. 

If $G$ is of type ${\rm E}_8$ and $\OO(D)$ has type ${\rm A}_4+{\rm A}_1$ then there are two nilpotent orbits in $\g$ (and $\g_\C$) of dimension $N(D)=188$. One of these orbits is rigid in $\g_\C$ and hence in $\g$ by Lemma~\ref{rigid1} (it has type $2{\rm A}_3$).
On the other hand, combining Theorem~\ref{thm2.5} with  \cite[Table~8]{dG-E} one observes that
$\g$ must have an {\it induced} orbit of dimension $188$. This rules out the case where $\OO(D)$ has that type. Looking at the diagrams $D(\Gamma)$ attached in \cite{dG-E} to the orbits
labelled ${\rm A}_3+2{\rm A}_1$ in type ${\rm E}_7$ and ${\rm A}_4+{\rm A}_2+{\rm A}_1$, ${\rm E}_6({\rm a}_3)+{\rm A}_1$,
${\rm A}_6+{\rm A}_1$ and ${\rm A}_7$ it type ${\rm E}_8$ one finds out that they satisfy the conditions of Corollary~\ref{princ}. In view of Proposition~\ref{regular} \cite{dG-E} this means that these orbits cannot be rigid in $\g$.

Since there was no obvious way to deal with remaining three cases in types ${\rm E}_7$ and ${\rm E}_8$ these have been checked through GAP. As our calculations in GAP in fact prove the stronger result of Theorem \ref{thm:distrib} without the need to use the present result, we leave the details of this calculation until \S\ref{sec:indcalcs}.\end{proof}

\section{Further computations with nilpotent orbits}

In this section we describe routines performed in GAP which extend those of \cite{DeG13} and \cite{dG-E} to positive characteristic, in particular justifying the tables at the end of the document. The basic problem in obtaining results in arbitrary characteristic is to reduce to a finite list the prime characteristics which must be considered separately. For this, the idea is always to produce an integral matrix such that it encodes this list via the prime divisors of its elementary divisors. 

\subsection{Nilpotent orbit representatives}\label{nilorb}
For our calculations we require (i) a list of nilpotent orbit representatives for bad primes;
(ii) a list of orbit representatives for good primes with associated cocharacters.

Those in (i) are provided by the paper \cite{UGA05}.\footnote{It appears there were two errors transcribing the nilpotent orbit list from the Magma code into the article \cite{UGA05}. Specifically the orbit $F_4(a_3)$ in $F_4$  should read $x_{\alpha_2}+x_{\alpha_1+\alpha_2}+x_{\alpha_2+2\alpha_3}+x_{\alpha_1+\alpha_2+2\alpha_3+2\alpha_4}$; the orbit $A_6^{(2)}$ in $E_8$ should read \[x_{\alpha_1+\alpha_3+\alpha_4}+x_{\alpha_3+\alpha_4+\alpha_5}+x_{\alpha_2+\alpha_3+\alpha_4}+x_{\alpha_2+\alpha_3+\alpha_4}+x_{\alpha_2+\alpha_4+\alpha_5}+x_{\alpha_5+\alpha_6}+x_{\alpha_6+\alpha_7}+x_{\alpha_4+\alpha_5+\alpha_6+\alpha_7}.\]
}

(ii) For this we use \cite{LT11} which provides all the data we require.

Since there are a number of errors in the literature related to nilpotent orbits, we have made reasonable attempts to check the validity of the orbit representatives we use. For example we checked that they had the same Jordan blocks as found in \cite{UGA04}, \cite{UGA05} and \cite{Law95} (in the group case). 
It is worth mentioning that for the exceptional groups considered by \cite{UGA04}, the paper
only deals with the elements of the {\it restricted} nullcone $\N_p(\g)=\{x\in\g\,|\,\,x^{[p]}=0\}$. Hence our computations offer some new insights into the Jordan block structure of those nilpotent elements  $x\in \g$ for which $x^{[p]}\ne 0$ (of course, such elements exist only when $p$ is less that or equal to than the Coxeter number of $G$). It  turned out that as in the group case
(investigated in \cite{Law95})  
there are no coincidences in Jordan block decompositions of representatives of different nilpotent orbits in $\g$ except when $p=7$ where the Jordan block decompositions of nilpotent elements of type $B_3$ and $C_3$ in Lie algebras of type ${\rm F}_4$ are the same. As our calculation is likely to be useful in future work, we state it formally.
\begin{theorem}\label{J-blocks} Let $G$ be a connected reductive $k$-group  satisfying the standard hypotheses and $\g=\Lie(G)$. Then the Jordan block structure of a nilpotent element of $\g$ on the adjoint module coincides with that of a unipotent element of $G$ with the same Dynkin label.\end{theorem}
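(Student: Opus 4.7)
The plan is to combine a structural reduction with direct computation, essentially matching two independently-produced tables. First, using the orthogonal decomposition $\g = \tilde\g_1 \oplus \cdots \oplus \tilde\g_s \oplus \z$ established in \S\ref{SH}, both $\ad(e)$ and $\Ad(u)$ preserve this splitting (with trivial action on $\z$), and the Dynkin label of a nilpotent element factors across simple components. This reduces the theorem to the case of simple $G$.

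For $G$ of classical type in good characteristic, one proceeds combinatorially. If $e$ is nilpotent with partition $\lambda$ on the natural module $V$ for $G \in \{\SL_n, \SO_n, \Sp_n\}$, the Jordan type of $\ad(e)$ on $\g \subseteq \End(V)$ is determined by the Jordan type of $e \otimes \mathrm{id} - \mathrm{id} \otimes e^{\ast}$ on $V \otimes V^{\ast}$, refined by the appropriate invariant form; this is precisely the combinatorial recipe recalled in \cite{PT14}. For a unipotent element $u$ with the same partition $\lambda$, the Jordan type of $\Ad(u)$ on $\g$ is determined by $u \otimes (u^{-1})^{\ast}$ on $V \otimes V^{\ast}$, which in good characteristic yields the same partition. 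Hence the statement for classical types follows from a direct comparison of these two recipes.

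For $G$ of exceptional type, the argument becomes computational. Using the orbit representatives assembled in \S\ref{nilorb}---those of \cite{LT11} in good characteristic and of \cite{UGA05} (with the corrections flagged above) in bad characteristic---one writes a GAP routine that builds the matrix of $\ad(e)$ on a Chevalley basis of $\g$ and returns the partition of its Jordan blocks. Running this over every nilpotent orbit representative for each type ${\rm G}_2, {\rm F}_4, {\rm E}_6, {\rm E}_7, {\rm E}_8$ and each relevant prime $p$ produces a table which one compares, orbit by orbit under the Dynkin-label correspondence, against Lawther's tabulation of the Jordan partitions of $\Ad(u)$ on $\g$ in \cite{Law95}. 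The agreement of the two lists in every case is the content of the theorem.

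The main obstacle is neither conceptual nor computational but rather one of data integrity: the combined length of the tables to be verified is considerable, and the comparison is meaningful only if the nilpotent orbit representatives are genuinely correct. This is precisely why the paper explicitly records the transcription errors in \cite{UGA05} for the $F_4(\mathrm{a}_3)$ and $A_6^{(2)}$ orbits and cross-checks the representatives against the Jordan-block data of \cite{UGA04} and \cite{Law95} before making the final comparison. The only coincidence in Jordan data that survives the cross-check is between the $B_3$ and $C_3$ orbits in type ${\rm F}_4$ at $p=7$; since this coincidence is matched on the unipotent side, it is consistent with---rather than an obstruction to---the theorem's statement.
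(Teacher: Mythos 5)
Your overall strategy---reduce to simple groups using the decomposition $\g=\tilde{\g}_1\oplus\cdots\oplus\tilde{\g}_s\oplus\z$, handle the exceptional types by explicit Jordan-block computation in GAP against Lawther's tables, and treat the classical types combinatorially---mirrors the structure of the paper's proof. The reduction and the exceptional-type argument are essentially what the paper does.

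However, the classical case as you have written it contains a genuine gap. You assert that the Jordan type of $e\otimes\mathrm{id}-\mathrm{id}\otimes e^{\ast}$ on $V\otimes V^{\ast}$ and the Jordan type of $u\otimes(u^{-1})^{\ast}$ on $V\otimes V^{\ast}$ coincide in good characteristic, and that the theorem for classical types then ``follows from a direct comparison of these two recipes.'' But this coincidence is precisely the hard content of the classical case, not a routine check. In positive characteristic the Jordan decompositions of $N_a\otimes 1+1\otimes N_b$ (nilpotent tensor square) and of $u_a\otimes u_b-1$ (unipotent tensor square) are \emph{each} governed by non-classical modular Clebsch--Gordan rules, and the operators differ by the cross term $N_a\otimes N_b$; the equality of the two Jordan types is a theorem, not a bookkeeping exercise. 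The paper treats the classical case precisely by citing McNinch's work on adjoint Jordan blocks, which in turn rests on Fossum's results on one-dimensional formal group actions; it is this layer that your proposal omits. To close the gap you would either need to invoke McNinch's theorem explicitly or supply the combinatorial comparison of the two modular tensor-product rules yourself, which is substantially more than a ``direct comparison.''
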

\begin{proof}
Since $G$ satisfies the standard hypotheses, $p$ is a good prime for $G$.
Our discussion in \S\ref{SH} shows that no generality will be lost by assuming that either
$G$ is a simple exceptional algebraic group or $G$ is a classical group of type other than ${\rm A}_{rp-1}$ or $G=GL_{rp}$. In the first case the theorem follows from our computer-aided calculations whilst in the other two cases it was deduced by McNinch \cite{McN02} from earlier results of Fossum on formal group laws; see \cite{Fos89}.
\end{proof}
It would be interesting to find a non-computational and case-free proof of Theorem~\ref{J-blocks}.

Finally, all our results recover those of De Graaf for generic prime characteristic.
Thus we can declare reasonable confidence in the accuracy our results subject to the correctness of the orbit representatives used in the Magma code in \cite{UGA05}. We thank Dan Nakano for the provision of this data.

\subsection{Induced nilpotent representatives}\label{sec:indcalcs}
Here we show that the nilpotent orbit representatives given in the tables of \cite{dG-E} are still valid in good characteristic and use the results of the previous section to prove this. Let $e\in\g_\Z$ be a nilpotent orbit representative as given in the tables of \cite{dG-E}. We proceed by  having GAP compute the adjoint matrix of each representative over $\Q$ according to a Chevalley basis of $\g_\Z$. Then a routine computes the integral divisors of successive powers of the matrix. For any prime dividing the integral divisors we know that the Jordan block structure of $\ad e$ for $e\in\g=\g_\Z\otimes_{\Z}k$ differs from that in characteristic zero. Let this finite list of primes be $S$. If $p\not\in S$ then since the Jordan block structure of $\ad e$ determines it in good characteristic by \cite{Law95}, we are done in this case. For each good prime of $S$ we have GAP compute the Jordan block structure separately. We find in all cases that the Jordan block structure is the same as that of a nilpotent element in $\g_\C=\g_\Z\otimes_{\Z}\C$ with the same label, as listed in \cite{Law95}.\footnote{Our calculation threw up a misprint in \cite{dG-E}. Specifically the first nilpotent representative for the induced nilpotent orbit $E_7(a_4)$ in $E_8$ has $27$ over two different nodes of the Dynkin diagram. W.~de Graaf kindly redid the calculation and found that the right-most $27$ should be a $44$, and the $44$ (just below the wrong $27$) should be a $14$.}

\begin{proof}[Proof of Theorem~\ref{thm:distrib}]
Each nilpotent element of \cite{dG-E} is of the form $e_\Gamma=e_0+e_1$ where $e_0\in\l$ is a rigid nilpotent orbit of $\l$ and $e_1\in\n$ for $\n$ the nilradical of a parabolic subalgebra $\p=\l+\n$ of $\g$. The orbit dimension $\dim \OO_L(e_0)=\dim[\l,e_0]$ is given by taking the rank of the matrix formed from the coefficients of the images of $e$ under a basis of $\ad \l$. Hence after reduction modulo $p$, the orbit dimension can only go down. Since by \ref{thm2.5} the dimension of the orbit $\Ind_\l^\g\,\OO_L(e_0)$ is given by the formula $\dim \g-\dim\g_e=\dim \g-\dim\l_{e_0}$ we know that if $\dim\g_e$ is independent of good characteristic then $e$ is indeed in the nilpotent orbit which intersects densely with $\OO_L(e_0)+\n$ in good characteristic. But our calculation of the Jordan blocks of $\ad e_\Gamma$ (described in \S\ref{nilorb}) shows that this is the case for all $\Gamma$'s listed in \cite{dG-E} and, moreover, that the orbit $\OO(e_\Gamma)$ always has the same label as its counterpart $\OO(e_{\Gamma,\C})\subset\g_\C$. The exceptional case where $p=7$ and $\g$ is of type ${\rm F}_4$  
does not cause us serious problems because $\g$ contains only two sheets of dimension
$44$ both of which have rank $2$ (this is immediate from  \cite[Table~10]{dG-E} and Remark~\ref{r6}). Proposition~\ref{prop3.2}(ii) implies that one of them
contains the orbit of type ${\rm B}_3$. On the other hand, a closer look at \cite[Table~10]{dG-E} reveals that the sheets of dimension $\ne 44$ cannot contain nilpotent orbits of dimension $42$. This yields that the other sheet of dimension $44$ contains the orbit of type ${\rm C}_3$ (as in the characteristic $0$ case).

Next we observe that $e_0=e_{\Gamma_0}$ and $e_1=e_{\Gamma_1}$ for some full subgraphs $\Gamma_0$ and $\Gamma_1$ of $\Gamma$. Looking through the tables in \cite{dG-E} one finds out that if $G$ is a group of type $\rm E$ then in all cases except two $\Gamma_0$ is a disjoint union of Dynkin graphs (possibly empty). Specifically,
the two exceptional cases occur when $G$ has type ${\rm E}_8$ and $e$ has type ${\rm E_7(a_5)}$ or ${\rm D_5(a_1)+A_1}$. In both cases $\l$ contains a {\it unique} nilpotent orbit whose dimension equals that of $\OO_L(e_0)$. A close look at the tables in \cite{dG-E} shows the same holds
for groups of type ${\rm G}_2$ and ${\rm F}_4$ in all cases of interest.
Thanks to Corollary~\ref{princ} this implies that
the $L$-orbit of $e_0=e_{\Gamma_0}$ has the same
label as the $L_\C$-orbit of $e_{\Gamma_0,\C}$. 

Given a nilpotent orbit $\OO\subset \N(\g)$ we define $$d(\OO):={\rm Card}\,\{\mathcal{S}\,|\,\, \mathcal{S} \mbox { is a sheet of } \g \mbox { containing } \OO\}$$
and denote by $d(\OO_\C)$ the number of sheets of $\g_\C$ containing the nilpotent orbit having the same
Dynkin diagram as $\OO$.
The preceding discussion in conjunction with \cite{dG-E} implies that $d(\OO)\ge d(\OO_\C)$ for any orbit $\OO\subset \N(\g)$.
Since every sheet of $\g$ contains a unique nilpotent orbit (Remark~\ref{r5}) the total number of sheets of $\g$ equals
$\sum_{\OO\subset\N(\g)}\,d(\OO)$. Since Theorem~\ref{thm2.5} established a 
a bijection between the sheets of $\g$ and $\g_\C$, we now deduce that $d(\OO)=d(\OO_\C)$ for any orbit $\OO\subset\N(\g)$.
This completes the proof of Theorem~\ref{thm:distrib}.
\end{proof}
\subsection{(Strong) reachability.}\label{s:str}
We will reduce the problem of classifying the (strongly) reachable elements to a finite calculation which can be performed in GAP. For this we will (i) exhibit a bound on the number of characteristics where the result may differ from the situation in characteristic zero established by de Graaf, then (ii) give a method to deal with any specific prime.

For (ii) we reduce to a calculation over the prime field $\F_p$. Observe that the Lie algebras we deal with are defined over $\Z$. That is, there is a Lie algebra $\g_{\Z}$ over the integers with a basis $\mathcal{B}$, and associated structure constants, such that $\g_p:=\g_{\Z}\otimes_{\Z}\F_p$ and $\g=\g_p\otimes_{\F_p}k$ such that $\g$ and $\g_p$ obtain a basis and associated structure constants by taking $B$ together with its structure constants reduced modulo $p$. 

For this, $\mathcal{B}$ can be taken as a Chevalley basis $\{x_\alpha\,|\,\,\alpha\in\Phi\}\cup\{h_\alpha\,|\,\,\alpha\in \Pi\}$ of $\g_\Z$. Then the tables in \cite[\S5]{UGA05} give a complete set of nilpotent representatives over $k$ in terms of linear combinations of elements $\mathcal{B}$, possibly up to the signs of the coefficients. Since the coefficients are all over $\Z$, these nilpotent elements are also elements of $\g_p$ by reduction mod $p$. Now since $\g_e$ is a restricted subalgebra of $\g$, we may write $\g_e=(\g_p)_e\otimes_{\F_p}k$. Then in order to establish reachability of a nilpotent element $e$, it suffices to check whether we have $e$ contained in the derived subalgebra of $(\g_p)_e$.

This calculation can then be performed by GAP.

For (i), we assume that $p$ is a good prime for $\g$, any bad primes being dealt with in case (ii). We work over the integers, i.e. with the admissible $\Z$-form $\g_\Z$ (here we use our assumption that the group $G$ is simply connected). Fix a nilpotent element $e$. First, we ask GAP to calculate a basis $\mathcal{B}_e$ of $(\g_{\Z})_e$ in terms of linear combinations of elements of $\g$. By the theory of nilpotent elements it is possible to take this basis such that reduction of the coefficients modulo $p$ also gives a basis of $(\g_p)_e$.

Next we take the product of each pair of elements of $\mathcal{B}_e$ and form a matrix $M$ of the $\mathcal{B}$-coefficients of the resulting vectors. Then the dimension of $[(\g_p)_e,(\g_p)_e]$ is the rank of the reduction modulo $p$ of the matrix $M$. Thus, if we take $p$ bigger than any of the elementary divisors of $M$ the result will agree with that over $\Z$. Similar remarks apply to the  subalgebra $\F_pe+[(\g_p)_e,(\g_p)_e]$ of $(\g_p)_e$ whose dimension can be determined by calculating the rank of the matrix $M'$ which is formed by adjoining the row of coefficients of $e$ itself to $M$. Thus for $p$ bigger than any of the elementary divisors of $M$ and $M'$, we have that $e$ is reachable over $k$ if and only if it is reachable over a field of characteristic $0$. More precisely, we establish that the only exceptional primes are less than or equal to $7$.

\subsection{Almost perfect centralisers and $c(\g_e)=\dim(\g_e/[\g_e,\g_e])$}
For a given prime, identifying the orbits which have almost perfect centralisers (i.e. $\g_e=[\g_e,\g_e]+ke$) is a straightforward calculation in $\g_p=\g_\Z\otimes_\Z\F_p$ with GAP: We simply ask GAP to list those nilpotent elements $e$ for which \[\dim(\g_p)_e=\dim[(\g_p)_e,(\g_p)_e]+1\] and
\[\dim\big(\F_pe+[\g_e,\g_e]\big)=\dim (\g_p)_e.\] 
This process deals in particular with the bad primes. The other possible exceptional primes were already calculated: if $\dim[(\g_p)_e,(\g_p)_e]$ or $\dim\big(\F_pe+ [(\g_p)_e,(\g_p)_e]\big)$ differs from the analogous dimension over characteristic zero, then we established in \S\ref{s:str} that $p\leq 7$.

Much the same applies to calculating $c(\g_e)$. Again one need only look in characteristics at most $7$, where the calculation becomes finite, hence easily performed in GAP.

\subsection{Panyushev property.}\label{Pan} A nilpotent elements $e\in\OO(D)$ is said to satisfy {\it Panyushev's property} if the nilradical $\g_e(\lambda_D,\ge 1)$ of $\g_e$ is generated by $\g_e(\lambda_D,1)$ as a Lie algebra. Since 
this definition relies on the so-called associated cocharacters which do not exist for all nilpotent elements in bad characteristic, Panyushev's property is particularly interesting under our assumptions on $G$. 

We assume that $G$ is an exceptional group and $p$ is a good prime for $G$.
From \cite{LT11} we take a nilpotent element $e\in\g$ and associated cocharacter $\tau\in X_*(G)$. For $e\in\OO(D)$ this cocharacter coincides with $\lambda_D$ and it is optimal for $e$ in the sense of the Kempf--Rousseau theory by one of the main results of \cite{P03}. Let $\Phi$ be the root system of $G$ associated with a maximal torus containing $\tau(k^\times)$.
We start by working over rationals and ask GAP to compute the root vectors in $\g_\Q(1):=\g_\Q(\tau,1)$. Let $\mathcal{B}$ be a basis of $\g_\Q$ containing a set of root vectors $\{e_\alpha\,|\,\alpha\in\Phi\}$. Now we form a $(\dim\g_\Q(\tau,1)\times \dim \g_\Q)$-matrix $M$ of the $\mathcal{B}$-coefficients of  $[b,e]$ with $b\in \mathcal{B}\cap \g_\Q(1)$. Then $(\g_\Q)_e(1)$  coincides with the kernel of $\ad\,e\colon\,\g_\Q(1)\to \g_\Q$ which for GAP is $\{v\in \Q^{\dim\g_\Q(\tau,1)}\,|\,\, v\cdot M=0\}$. Since $p$ is good for $G$, we have that $(\g_p)_e=(\g_\Z)_e\otimes_\Z\F_p$ and  $(\g_p)_e(1)=(\g_\Z)_e(1)\otimes_\Z\F_p$. In particular, $(\g_p)_e(1)$ is characteristic independent. 

Next we generate a Lie subring $\m$ of $\g_\Z$ from a $\Q$-basis of $(\g_\Q)_e(1)$ contained in $\g_\Z$ and take a basis $\mathcal{B}_\m$ of the $\Q$-span of $\m$ that lies in $\g_\Z$. We then form the matrix of $\mathcal{B}$-coefficients of the elements of $\mathcal{B}_\m$ and take elementary divisors again. If $p$ is bigger than any of these elementary divisors then the rank of this matrix will give the dimension of the Lie subalgebra of $\g_p$ generated by $(\g_p)_e(1)$. Since it turned out that all elementary divisors appearing in these calculations involve bad primes only, we conclude that the Panyushev property is independent of good characteristic.
\begin{remark} There are six rigid nilpotent orbits in exceptional Lie algebras which do not satisfy Panyushev's property in good characteristic. These orbits are $\tilde{\rm A}_1$ in type ${\rm G}_2$, ${\rm {\tilde A}_2+A_1}$ in type 
${\rm F}_4$, $({\rm A_3+A_1)'}$ in type ${\rm E}_7$, and
${\rm A_3+A_1}$, ${\rm A_5+A_1}$, ${\rm D_5(a_1)+A_2}$ in type ${\rm E}_8$. The above routine was applied to these orbits too in order to determine the smallest number $r$ for which
$\sum_{i=1}^r\,\g_e(\tau,i)$ generates the Lie algebra $\g_e(\tau,\ge 1)$. It turned out that $r=2$ in type ${\rm G}_2$, $r=4$ in type $E_8$ when $e$ has type ${\rm A_5+A_1}$ and $r=3$ in the other four cases. 
All elementary divisors that we encountered in the process turned out to be divisible by $2$ and $3$ only.
\end{remark}

\begin{table}[!htb]
\begin{minipage}{.5\linewidth}\begin{center}\begin{tabular}{l l l l l}
$\g$ & $e$ & $p$ & strong & almost\\\hline
$G_2$  & $\tilde A_1^{(3)}$ & $3$\\
 & $\tilde A_1$ & $2,3$ & $3$ & $p\geq 5$ \\
 & $A_1$ & any & $p\geq 5$\\\hline
$F_4$ & $F_4$ & $3$\\
& $C_3$ & $2$\\
& $C_3(a_1)$ & $2$ \\
& $A_1+\tilde A_2$ & $2,3$ & & $p\geq 5$\\
& $B_2$ & & & $p\geq 3$\\
& $A_2+\tilde A_1$ & $p\geq 3$ & $p\geq 5$ \\
& $\tilde A_2$ & $2$ & $2$ & $p\geq 3$\\
& $A_2$ & & & $p\geq 3$\\
& $A_1+\tilde A_1$ & any & $p\geq 3$\\
& $\tilde A_1^{(2)}$ & $2$\\
& $\tilde A_1$ & any & any\\
& $A_1$ & any & $p\geq 3$\\\hline
$E_6$ & $E_6$ & $3$\\
& $A_5$ & $2$\\
& $A_4+A_1$ & $2,3$\\
& $A_3+A_1$ & $2$\\
& $2A_2+A_1$ & any & $p\geq 5$\\
& $A_2+2A_1$ & any\\
& $2A_2$ & $2$\\
& $A_2+A_1$ & any \\
& $A_2$ & & & any\\
& $3A_1$ & any & $p\geq 3$\\
& $2A_1$ & any\\
& $A_1$ & any & any \\\hline 
\end{tabular}\end{center}\end{minipage}%
\begin{minipage}{.5\linewidth}\begin{center}
\begin{tabular}{l l l l l}
$\g$ & $e$ & $p$ & strong & almost\\\hline
$E_7$ & $E_7$ & $3$\\
 & $E_6$ & $3$\\
 & $D_6(a_1)$ & $2$\\
 & $A_6$ & $2$\\
 & $D_5(a_1)+A_1$ & $2$\\
  & $A_5+A_1$ & $3$ & & $p\geq 5$\\
 & $(A_5)'$ & $2$ & & $p\geq 3$\\
 & $A_4+A_2$ & $2,3$& & $p\geq 5$\\
 & $A_4+A_1$ & any\\
 & $D_4+A_1$ & & & $p\geq 3$ \\
 & $A_3+A_2+A_1$ & $3,5$ & & $p\geq 7$\\
 & $A_3+A_2$ & $2$\\
 & $D_4(a_1)+A_1$ & $2$\\
 & $A_3+2A_1$ & $3$& & $p\geq 5$\\
 & $(A_3+A_1)'$ & $2$ & & $p\geq 3$\\
 & $2A_2+A_1$ & any & $p\geq 5$\\
 & $A_2+3A_1$ & $7$ & & $p\neq 2,7$\\
 & $2A_2$ & $2$& & $p\geq 3$\\
 & $A_3$ & & & $p\geq 3$\\
 & $A_2+2A_1$ & any & $p\geq 3$\\
 & $A_2+A_1$ & any\\
 & $4A_1$ & $p\geq 3$ & $p\geq 3$\\
 & $A_2$ & & & any\\
 & $(3A_1)'$ & any & $p\geq 3$\\
 & $(3A_1)''$ & $3$ & & $p\geq 5$\\
 & $2A_1$ & any & $p\geq 3$\\
 & $A_1$ & any & any\\
 &\\
 \hline
 \end{tabular}\end{center}\end{minipage}
 \smallskip
 \caption
 {Strongly reachable and almost reachable orbits for $G_2$, $F_4$, $E_6$ and $E_7$}\label{t1}\end{table}
\begin{table}[!htb]
\begin{minipage}{.5\linewidth}  
\begin{center}
 \begin{tabular}{l l l l l}
 $\g$ & $e$ & $p$ & strong & almost\\\hline
$E_8$ & $E_8$ & $3,5$\\
& $E_8(a_1)$ & $3$\\
& $E_7$ & $3$\\
& $D_7$ & $2$\\
& $E_6+A_1$ & $3$\\
& $D_7(a_1)$ & $2$\\
& $E_{8}(b_{6})$ & $3$\\
& $(A_{7})^{(3)}$ & $3$\\
& $A_{7}$ & $2,3$ & & $p\geq 5$\\
& $D_{7}(a_{2})$ & $2$\\
& $E_{6}$ & $3$\\
& $A_6+A_1$ & $3,5$ & & $p\geq 7$\\
& $D_{6}(a_{1})$ & $2$\\
& $A_6$ & $2$\\
& $D_{6}(a_{2})$ & $2$ & & $3$\\
& $D_{5}(a_{1})+A_{2}$ & $2$ & & $p\geq 5$\\
& $A_{5}+A_{1}$ & $2,3$ & & $p\geq 5$\\ 
& $A_{4}+A_{3}$ & any & $p\geq 7$\\
& $D_4+A_2$ & $2$\\
& $A_{4}+A_{2}+A_{1}$ & $7$ & & $p\neq 2,5,7$\\
& $D_{5}(a_{1})+A_{1}$ & $2$ & & $p\geq 3$\\
& $A_5$ & $2$ & & $p\geq 3$\\
& $A_{4}+A_{2}$ & $2,3$ & & $p\geq 5$\\
 \hline\end{tabular}\end{center}\end{minipage}%
\begin{minipage}{.5\linewidth}
  \begin{center}\begin{tabular}{l l l l}
$e$ & $p$ & strong & almost\\\hline
$A_{4}+2A_{1}$& any\\
 $2A_{3}$& any & $p\geq 3$\\
 $A_{4}+A_{1}$& any\\
 $D_{4}(a_{1})+A_{2}$ & $2$ & & $p\geq 5$\\
 $D_4+A_1$ & & & $p\geq 3$\\
 $A_{3}+A_{2}+A_{1}$& $p\geq 3$ & $p\geq 3$\\
 $A_3+A_2$ & $2$\\
 $D_{4}(a_{1})+A_{1}$& any & $p\geq 3$\\
 $A_{3}+2A_{1}$& any & $p\geq 3$\\
 $2A_{2}+2A_{1}$&any & $p\geq 5$\\
 $D_4(a_1)$ & & & $2$\\
 $A_3+A_1$ & $2$ & & $p\geq 3$\\
 $2A_{2}+A_{1}$&any & $p\geq 5$\\
 $2A_2$ & $2$ & & $p\geq 3$\\
 $A_{2}+3A_{1}$& any & $p\geq 3$\\
 $A_3$ & & & any\\
 $A_{2}+2A_{1}$& any & $p\geq 3$\\
 $A_{2}+A_{1}$& any & any\\
 $4A_{1}$& any & $p\geq 3$\\
 $A_2$ & & & any\\
 $3A_{1}$& any & $p\geq 3$\\
 $2A_{1}$& any & any\\
 $A_{1}$ & any & any\\\hline 
\end{tabular}
\end{center}\end{minipage}
\smallskip
\caption{Strongly reachable and almost reachable orbits for $E_8$} 
\label{t2}
\end{table}
\begin{table}{\footnotesize\begin{minipage}[t]{0.32\linewidth}\vspace{0pt}\begin{tabular}{l l}
Orbit in $E_8$ & $\dim(\g_e/[\g_e,\g_e])$\\&$p=2,\ 3,\ 5, \geq 7$\\\hline
$E_{8}$ & $12,\ 6,\ 3,\ 8$\\
$E_{8}(a_{1})$ & $12,\ 6,\ 9,\ 7$\\
$E_{8}(a_{2})$ & $12,\ 10,\ 8,\ 6$\\
$E_{8}(a_{3})$ & $12,\ 5,\ 7,\ 7$\\
$E_{8}(a_{4})$ & $14,\ 8,\ 6,\ 6$\\
$E_{7}$ & $11,\ 3,\ 4,\ 4$\\
$E_{8}(b_{4})$ & $11,\ 6,\ 5,\ 5$\\
$E_{8}(a_{5})$ & $11,\ 6,\ 5,\ 5$\\
$E_{7}(a_{1})$ & $11,\ 5,\ 5,\ 5$\\
$E_{8}(b_{5})$ & $11,\ 7,\ 7,\ 7$\\
$(D_{7})^{(2)}$ & $11,-,-,-$\\
$D_{7}$ & $10,\ 2,\ 2,\ 2$\\
$E_{8}(a_{6})$ & $11,\ 6,\ 6,\ 6$\\
$E_{7}(a_{2})$ & $6,\ 6,\ 4,\ 4$\\
$E_{6}+A_{1}$ & $6,\ 5,\ 2,\ 2$\\
$(D_{7}(a_{1}))^{(2)}$ & $6,-,-,-$\\
$D_{7}(a_{1})$ & $5,\ 4,\ 4,\ 4$\\
$E_{8}(b_{6})$ & $7,\ 3,\ 5,\ 5$\\
$E_{7}(a_{3})$ & $8,\ 4,\ 4,\ 4$\\
$E_{6}(a_{1})+A_{1}$ & $3,\ 7,\ 3,\ 3$\\
$(A_{7})^{(3)}$ & $-,6,-,-$\\
$A_{7}$ & $10,\ 3,\ 1,\ 1$\\
$D_{7}(a_{2})$ & $11,\ 2,\ 3,\ 3$\\
$E_{6}$ & $5,\ 3,\ 4,\ 4$\\
$D_{6}$ & $10,\ 2,\ 2,\ 2$\\
$(D_{5}+A_{2})^{(2)}$ & $10,-,-,-$\\
$D_{5}+A_{2}$ & $10,\ 3,\ 3,\ 3$\\
$E_{6}(a_{1})$ & $6,\ 5,\ 4,\ 4$\\
$E_{7}(a_{4})$ & $10,\ 4,\ 3,\ 3$\\
$A_{6}+A_{1}$ & $10,\ 3,\ 1,\ 1$\\
$D_{6}(a_{1})$ & $3,\ 3,\ 3,\ 3$\\
$(A_{6})^{(2)}$ & $10,-,-,-$\\
$A_{6}$ & $5,\ 2,\ 2,\ 2$\\
$E_{8}(a_{7})$ & $10,\ 10,\ 10,\ 10$\\
$D_{5}+A_{1}$ & $5,\ 2,\ 2,\ 2$\\
$E_{7}(a_{5})$ & $5,\ 6,\ 6,\ 6$\\
$E_{6}(a_{3})+A_{1}$ & $5,\ 3,\ 3,\ 3$\\
$D_{6}(a_{2})$ & $5,\ 1,\ 3,\ 3$\\
$D_{5}(a_{1})+A_{2}$ & $4,\ 3,\ 1,\ 1$\\
$A_{5}+A_{1}$ & $5,\ 2,\ 1,\ 1$\\
\end{tabular}\end{minipage}
\begin{minipage}[t]{0.32\linewidth}\vspace{0pt}\begin{tabular}{l l}
Orbit in $E_8$ & $\dim(\g_e/[\g_e,\g_e])$\\&$p=2,\ 3,\ 5, \geq 7$\\\hline
$A_{4}+A_{3}$ & $5,\ 2,\ 2,\ 0$\\
$D_{5}$ & $4,\ 3,\ 3,\ 3$\\
$E_{6}(a_{3})$ & $4,\ 3,\ 3,\ 3$\\
$(D_{4}+A_{2})^{(2)}$ & $3,-,-,-$\\
$D_{4}+A_{2}$ & $9,\ 2,\ 2,\ 2$\\
$A_{4}+A_{2}+A_{1}$ & $3,\ 1,\ 3,\ 1$\\
$D_{5}(a_{1})+A_{1}$ & $1,\ 1,\ 1,\ 1$\\
$A_{5}$ & $3,\ 1,\ 1,\ 1$\\
$A_{4}+A_{2}$ & $4,\ 1,\ 1,\ 1$\\
$A_{4}+2A_{1}$ & $1,\ 1,\ 1,\ 1$\\
$D_{5}(a_{1})$ & $2,\ 2,\ 2,\ 2$\\
$2A_{3}$ & $9,\ 0,\ 0,\ 0$\\
$A_{4}+A_{1}$ & $1,\ 1,\ 1,\ 1$\\
$D_{4}(a_{1})+A_{2}$ & $4,\ 2,\ 1,\ 1$\\
$D_{4}+A_{1}$ & $9,\ 1,\ 1,\ 1$\\
$(A_{3}+A_{2})^{(2)}$ & $9,-,-,-$\\
$A_{3}+A_{2}+A_{1}$ & $9,\ 0,\ 0,\ 0$\\
$A_{4}$ & $2,\ 2,\ 2,\ 2$\\
$A_{3}+A_{2}$ & $4,\ 2,\ 2,\ 2$\\
$D_{4}(a_{1})+A_{1}$ & $2,\ 0,\ 0,\ 0$\\
$A_{3}+2A_{1}$ & $4,\ 0,\ 0,\ 0$\\
$2A_{2}+2A_{1}$ & $4,\ 4,\ 0,\ 0$\\
$D_{4}$ & $3,\ 2,\ 2,\ 2$\\
$D_{4}(a_{1})$ & $1,\ 3,\ 3,\ 3$\\
$A_{3}+A_{1}$ & $2,\ 1,\ 1,\ 1$\\
$2A_{2}+A_{1}$ & $2,\ 2,\ 0,\ 0$\\
$2A_{2}$ & $1,\ 1,\ 1,\ 1$\\
$A_{2}+3A_{1}$ & $2,\ 0,\ 0,\ 0$\\
$A_{3}$ & $1,\ 1,\ 1,\ 1$\\
$A_{2}+2A_{1}$ & $1,\ 0,\ 0,\ 0$\\
$A_{2}+A_{1}$ & $0,\ 0,\ 0,\ 0$\\
$4A_{1}$ & $8,\ 0,\ 0,\ 0$\\
$A_{2}$ & $1,\ 1,\ 1,\ 1$\\
$3A_{1}$ & $2,\ 0,\ 0,\ 0$\\
$2A_{1}$ & $0,\ 0,\ 0,\ 0$\\
$A_{1}$ & $0,\ 0,\ 0,\ 0$\\\end{tabular}\end{minipage}
\begin{minipage}[t]{0.32\linewidth}\vspace{0pt}\begin{tabular}{l l}
Orbit in $E_7$ & $\dim(\g_e/[\g_e,\g_e])$\\&$p=2,\ 3,\ \geq 5$\\\hline
$E_{7}$ & $10,\ 4,\ 7$\\
$E_{7}(a_{1})$ & $10,\ 8,\ 6$\\
$E_{7}(a_{2})$ & $11,\ 7,\ 5$\\
$E_{7}(a_{3})$ & $13,\ 6,\ 6$\\
$E_{6}$ & $11,\ 3,\ 4$\\
$E_{6}(a_{1})$ & $13,\ 6,\ 5$\\
$D_{6}$ & $9,\ 3,\ 3$\\
$E_{7}(a_{4})$ & $9,\ 5,\ 4$\\
$D_{6}(a_{1})$ & $5,\ 4,\ 4$\\
$D_{5}+A_{1}$ & $10,\ 3,\ 3$\\
$(A_{6})^{(2)}$ & $9,-,-$\\
$A_{6}$ & $10,\ 2,\ 2$\\
$E_{7}(a_{5})$ & $10,\ 6,\ 6$\\
$D_{5}$ & $10,\ 3,\ 3$\\
$E_{6}(a_{3})$ & $10,\ 3,\ 3$\\
$D_{6}(a_{2})$ & $5,\ 3,\ 3$\\
$D_{5}(a_{1})+A_{1}$ & $2,\ 2,\ 2$\\
$A_{5}+A_{1}$ & $5,\ 3,\ 1$\\
$(A_{5})'$ & $5,\ 1,\ 1$\\
$A_{4}+A_{2}$ & $6,\ 4,\ 1$\\
$D_{5}(a_{1})$ & $5,\ 3,\ 3$\\
$A_{4}+A_{1}$ & $2,\ 2,\ 2$\\
$D_{4}+A_{1}$ & $8,\ 1,\ 1$\\
$(A_{5})''$ & $4,\ 3,\ 3$\\
$A_{3}+A_{2}+A_{1}$ & $8,\ 1,\ 1$\\
$A_{4}$ & $5,\ 3,\ 3$\\
$(A_{3}+A_{2})^{(2)}$ & $8,-,-$\\
$A_{3}+A_{2}$ & $9,\ 2,\ 2$\\
$D_{4}(a_{1})+A_{1}$ & $4,\ 2,\ 2$\\
$D_{4}$ & $9,\ 2,\ 2$\\
$A_{3}+2A_{1}$ & $4,\ 1,\ 1$\\
$D_{4}(a_{1})$ & $3,\ 3,\ 3$\\
$(A_{3}+A_{1})'$ & $4,\ 1,\ 1$\\
$2A_{2}+A_{1}$ & $4,\ 2,\ 0$\\
$(A_{3}+A_{1})''$ & $3,\ 2,\ 2$\\
$A_{2}+3A_{1}$ & $2,\ 1,\ 1$\\
$2A_{2}$ & $3,\ 1,\ 1$\\
$A_{3}$ & $3,\ 1,\ 1$\\
$A_{2}+2A_{1}$ & $3,\ 0,\ 0$\\
$A_{2}+A_{1}$ & $1,\ 1,\ 1$\\
$4A_{1}$ & $7,\ 0,\ 0$\\
$A_{2}$ & $1,\ 1,\ 1$\\
$(3A_{1})'$ & $8,\ 0,\ 0$\\
$(3A_{1})''$ & $2,\ 1,\ 1$\\
$2A_{1}$ & $2,\ 0,\ 0$\\
$A_{1}$ & $0,\ 0,\ 0$
\\\end{tabular}\end{minipage}}
\smallskip
\caption{Codimension of $[\g_e,\g_e]$ in $\g_e$ for $E_7$ and $E_8$}\label{t3}\end{table}

\begin{table}{\footnotesize\begin{minipage}[t]{0.32\linewidth}\vspace{0pt}\begin{tabular}{l l}
Orbit in $E_6$ & $\dim(\g_e/[\g_e,\g_e])$\\&$p=2,\ 3,\ \geq 5$\\\hline
$E_{6}$ & $5,\ 4,\ 6$\\
$E_{6}(a_{1})$ & $7,\ 8,\ 5$\\
$D_{5}$ & $4,\ 8,\ 4$\\
$E_{6}(a_{3})$ & $6,\ 6,\ 5$\\
$D_{5}(a_{1})$ & $3,\ 5,\ 3$\\
$A_{5}$ & $4,\ 3,\ 2$\\
$A_{4}+A_{1}$ & $5,\ 4,\ 2$\\
$D_{4}$ & $3,\ 2,\ 2$\\
$A_{4}$ & $3,\ 3,\ 3$\\
$D_{4}(a_{1})$ & $2,\ 5,\ 5$\\
$A_{3}+A_{1}$ & $3,\ 4,\ 2$\\
$2A_{2}+A_{1}$ & $3,\ 3,\ 0$\\
$A_{3}$ & $2,\ 2,\ 2$\\
$A_{2}+2A_{1}$ & $1,\ 5,\ 1$\\
$2A_{2}$ & $2,\ 3,\ 2$\\
$A_{2}+A_{1}$ & $1,\ 3,\ 1$\\
$A_{2}$ & $1,\ 1,\ 1$\\
$3A_{1}$ & $2,\ 0,\ 0$\\
$2A_{1}$ & $1,\ 1,\ 1$\\
$A_{1}$ & $0,\ 0,\ 0$\\\end{tabular}\end{minipage}
\begin{minipage}[t]{0.32\linewidth}\vspace{0pt}\begin{tabular}{l l}
Orbit in $F_4$ & $\dim(\g_e/[\g_e,\g_e])$\\&$p=2,\ 3,\ \geq 5$\\\hline
$F_{4}$ & $5,\ 3,\ 4$\\
$F_{4}(a_{1})$ & $5,\ 5,\ 4$\\
$F_{4}(a_{2})$ & $7,\ 4,\ 3$\\
$(C_{3})^{(2)}$ & $8,-,-$\\
$C_{3}$ & $4,\ 2,\ 2$\\
$B_{3}$ & $5,\ 2,\ 2$\\
$F_{4}(a_{3})$ & $7,\ 6,\ 6$\\
$C_{3}(a_{1})^{(2)}$ & $8,-,-$\\
$C_{3}(a_{1})$ & $4,\ 3,\ 3$\\
$(\tilde{A}_{2}+A_{1})^{(2)}$ & $8,-,-$\\
$\tilde{A}_{2}+A_{1}$ & $4,\ 2,\ 1$\\
$(B_{2})^{(2)}$ & $7,-,-$\\
$B_{2}$ & $6,\ 1,\ 1$\\
$A_{2}+\tilde{A}_{1}$ & $7,\ 2,\ 0$\\
$\tilde{A}_{2}$ & $0,\ 1,\ 1$\\
$(A_{2})^{(2)}$ & $7,-,-$\\
$A_{2}$ & $7,\ 1,\ 1$\\
$A_{1}+\tilde{A}_{1}$ & $4,\ 0,\ 0$\\
$(\tilde{A}_{1})^{(2)}$ & $6,-,-$\\
$\tilde{A}_{1}$ & $0,\ 0,\ 0$\\
$A_{1}$ & $6,\ 0,\ 0$\\
\end{tabular}\end{minipage}
\begin{minipage}[t]{0.32\linewidth}\vspace{0pt}\begin{tabular}{l l}
Orbit in $G_2$ & $\dim(\g_e/[\g_e,\g_e])$\\&$p=2,\ 3,\ \geq 5$\\\hline
$G_{2}$ & $3,\ 3,\ 0$\\
$G_{2}(a_{1})$ & $3,\ 3,\ 1$\\
$(\tilde{A}_{1})^{(3)}$ & $-,\ 3,-$\\
$\tilde{A}_{1}$ & $2,\ 0,\ 3$\\
$A_{1}$ & $2,\ 2,\ 2$\\
\end{tabular}\end{minipage}}
\smallskip
\caption{Codimension of $[\g_e,\g_e]$ in $\g_e$ for $G_2$, $F_4$ and $E_6$}\label{t4}\end{table}

{\footnotesize
\bibliographystyle{alpha}


\end{document}